\newcommand{\A}{{\mathcal{A}}}
\newcommand{\B}{{\mathcal{B}}}
\newcommand{\C}{{\mathcal{C}}}
\renewcommand{\H}{{\mathcal{H}}}
\newcommand{\I}{{\mathcal{I}}}
\newcommand{\J}{{\mathcal{J}}}
\newcommand{\K}{{\mathcal{K}}}
\renewcommand{\L}{{\mathcal{L}}}
\newcommand{\M}{{\mathcal{M}}}
\newcommand{\bbB}{{\mathbb{B}}}
\newcommand{\bbC}{{\mathbb{C}}}
\newcommand{\bbK}{{\mathbb{K}}}
\newcommand{\bbN}{{\mathbb{N}}}
\newcommand{\ran}{\operatorname{ran}}
\newcommand{\id}{\operatorname{id}}
\newcommand{\lag}{\operatorname{lag}}
\newtheorem{thm}{Theorem}[section]
	\newtheorem{cor}[thm]{Corollary}
	\newtheorem{lem}[thm]{Lemma}
	\newtheorem{prp}[thm]{Proposition}
	\newtheorem{theoremintro}{Theorem}
\theoremstyle{definition}
	\newtheorem{dfn}[thm]{Definition}
	\newtheorem{asmp}[thm]{Assumption}
	\newtheorem{rmk}[thm]{Remark}
	\newtheorem{exm}[thm]{Example}
\begin{document}
	\title[Shift equivalence relations and C*-correspondences]{Shift equivalence relations through \\ the lens of C*-correspondences}
	
	        \author{Boris Bilich}
        \address{Department of Mathematics, University of Haifa, Haifa, Israel, and Department of Mathematics, University of G\"ottingen, G\"ottingen, Germany.}
        \email{bilichboris1999@gmail.com}
        
        \author{Adam Dor-On}
        \address{Department of Mathematics, University of Haifa, Mount Carmel, Haifa 3103301, Israel.}
        \email{adoron.math@gmail.com}
				
				\author{Efren Ruiz}
        \address{Department of Mathematics\\University of Hawaii,
Hilo\\200 W. Kawili St.\\
Hilo, Hawaii\\
96720-4091 USA}
        \email{ruize@hawaii.edu}
        \date{\today}

\subjclass{Primary: 46L08, 37B10. Secondary: 37A55, 46L55}
	\keywords{Shift equivalence, C*-correspondence, Williams' problem, shift equivalence problem, aligned shift equivalence, Cuntz-Pimsner algebras}
        
\begin{abstract}
We continue the study of shift equivalence relations from the perspective of C*-bimodule theory. We study emerging shift equivalence relations following work of the second-named author with Carlsen and Eilers, both in terms of adjacency matrices and in terms of their C*-correspondences, and orient them when possible. In particular, we show that if two regular C*-correspondences are strong shift equivalent, then the intermediary C*-correspondences realizing the equivalence may be chosen to be regular. This result provides the final missing piece in answering a question of Muhly, Pask and Tomforde, and is used to confirm a conjecture of Kakariadis and Katsoulis on shift equivalence of C*-correspondences.  
\end{abstract}

\thanks{B. Bilich was partially supported by a Bloom PhD scholarship at Haifa University. A. Dor-On was partially supported by an NSF-BSF grant no. 2530543 / 2023695 (respectively), a Horizon Marie-Curie SE project no. 101086394 and a DFG Middle-Eastern collaboration project no. 529300231. E. Ruiz was partially supported by the Simons Foundation and by the Mathematisches Forschungsinstitut Oberwolfach.}

\maketitle

\section{Introduction}

Shifts of finite type (SFTs), which are sometimes called topological Markov chains, are fundamental building blocks of symbolic dynamics, and have applications to hyperbolic dynamics, ergodic theory, matrix theory and other areas \cite{RB75, DGS76}.  In a foundational 1973 paper \cite{Wil73}, Williams recast conjugacy and eventual conjugacy problems for SFTs purely in terms of equivalence relations between adjacency matrices of directed graphs. These are called strong shift equivalence (SSE) and shift equivalence (SE), respectively. It was shown by Kim and Roush \cite{KRdeci, KRdecii} that shift equivalence is decidable, but the problem of decidability of SSE remains a fundamental open problem in symbolic dynamics. Williams expected SSE and SE to coincide, but after around 25 years the last hope for a positive answer was extinguished by Kim and Roush \cite{KR99}.

\begin{dfn}
Let $\mathsf{A}$ and $\mathsf{B}$ be matrices indexed by $V$ and $W$ respectively, with entries in $\bbN$ and let $m$ be a positive integer.  

We say that $\mathsf{A}$ and $\mathsf{B}$ are \emph{shift equivalent with $\lag$ $m$} if there are matrices $\mathsf{R}$ over $V \times W$ and $\mathsf{S}$ over $W\times V$ with entries in $\bbN$ such that 
\begin{align*}
\mathsf{A}^m &= \mathsf{R}\mathsf{S}  & \mathsf{S}\mathsf{R} &= \mathsf{B}^m \\
\mathsf{A}\mathsf{R} &=\mathsf{R}\mathsf{B} & \mathsf{B}\mathsf{S} &= \mathsf{A}\mathsf{R}.
\end{align*}

We say that $\mathsf{A}$ and $\mathsf{B}$ are \emph{strong shift equivalent with $\lag$ $m$} if there are matrices $\mathsf{R}_1,..., \mathsf{R}_m$ and $\mathsf{S}_1,...,\mathsf{S}_m$ such that

\begin{align*}
\mathsf{A} &= \mathsf{R}_1\mathsf{S}_1 & \mathsf{S}_m\mathsf{R}_m &= \mathsf{B} 
\end{align*}
$$
\mathsf{S}_i \mathsf{R}_i = \mathsf{R}_{i+1} \mathsf{S}_{i+1} \ \ \ \ \ \text{for all} \ \ \ \ \  i=1,...,m-1.
$$
\end{dfn}

The connection between symbolic dynamics and C*-algebras has a long and fruitful history.  Initiated by Cuntz and Krieger \cite{CK80, Cu81} in which they expressed several equivalence relations between SFTs through associated C*-algebras. In particular, they showed that SSE of finite essential matrices $\mathsf{A}$ and $\mathsf{B}$ implies that the Cuntz-Krieger C*-algebras $\mathcal{O}_\mathsf{A}$ and $\mathcal{O}_\mathsf{B}$ are stably isomorphic in a way preserving their gauge actions $\gamma^\mathsf{A}$ and $\gamma^\mathsf{B}$ together with their diagonal subalgebras $\mathcal{D}_\mathsf{A}$ and $\mathcal{D}_\mathsf{B}$.  Decades later, Carlsen and Rout \cite{CR} completed this connection by proving that SSE is characterized by the stabilized Cuntz-Krieger C*-algebra together with the gauge action and the diagonal subalgebra.  

Following work of the second author with Carlsen and Eilers \cite{CDE}, the problem of encapsulating SE in terms of Cuntz-Krieger C*-algebras was recently resolved by the authors \cite{BDR+}, where we show that SE of finite essential matrices $\mathsf{A}$ and $\mathsf{B}$ is equivalent to $\mathcal{O}_\mathsf{A}$ and $\mathcal{O}_\mathsf{B}$ being stably homotoplically equivalent in a way preserving their gauge actions $\gamma^\mathsf{A}$ and $\gamma^\mathsf{B}$. The study initiated in \cite{CDE} has led to newly emerging shift equivalence relations on the set of essential matrices with entries in $\mathbb{N}$ which are called compatible, aligned, and balanced shift equivalence (see Definition~\ref{dfn-relation-matrices}). One of the main result in \cite{CDE} is that two essential matrices with entries in $\mathbb{N}$ are compatible shift equivalent if and only if they are SSE. It was left open whether aligned and balanced shift relations also characterizes SSE. Our first main result shows that aligned and balanced shift equivalence both coincide with SSE as well.

\begin{theoremintro}\label{thm_cab}
The relations on finite essential matrices with entries in $\mathbb{N}$ given by aligned, balanced and compatible shift equivalence of lag $m$ coincide. Consequently, aligned and balanced shift equivalence relations coincide with SSE.
 \end{theoremintro}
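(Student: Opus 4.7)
The strategy is to reduce the theorem to the known characterization from \cite{CDE}: compatible shift equivalence of lag $m$ on essential $\mathbb{N}$-matrices already coincides with SSE. It then suffices to fit aligned SE and balanced SE into this equivalence class, which amounts to a cycle of implications among the four relations SSE, compatible SE, aligned SE, balanced SE.

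First I would handle the easy direction. Starting from an SSE chain $\mathsf{A}=\mathsf{R}_1\mathsf{S}_1$, $\mathsf{S}_i\mathsf{R}_i=\mathsf{R}_{i+1}\mathsf{S}_{i+1}$, $\mathsf{S}_m\mathsf{R}_m=\mathsf{B}$, form the telescoped products $\mathsf{R}:=\mathsf{R}_1\cdots\mathsf{R}_m$ and $\mathsf{S}:=\mathsf{S}_m\cdots\mathsf{S}_1$ and check directly that $\mathsf{A}^m=\mathsf{R}\mathsf{S}$, $\mathsf{S}\mathsf{R}=\mathsf{B}^m$, $\mathsf{A}\mathsf{R}=\mathsf{R}\mathsf{B}$, $\mathsf{B}\mathsf{S}=\mathsf{S}\mathsf{A}$, together with the extra alignment/balancedness identities in Definition~\ref{dfn-relation-matrices}. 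This is essentially a telescoping argument inside an SSE chain and should cost very little, giving SSE $\Rightarrow$ aligned SE and SSE $\Rightarrow$ balanced SE (the implication SSE $\Rightarrow$ compatible SE is \cite{CDE}).

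The heart of the proof is the reverse direction: aligned SE $\Rightarrow$ compatible SE and balanced SE $\Rightarrow$ compatible SE. Given $\mathsf{R}, \mathsf{S}$ satisfying the aligned (or balanced) axioms, I would manufacture new matrices $\mathsf{R}', \mathsf{S}'$ — obtained by composing the given $\mathsf{R}, \mathsf{S}$ with auxiliary matrices built out of $\mathsf{A}, \mathsf{B}, \mathsf{R}, \mathsf{S}$, or by passing to a higher lag and then normalizing — that upgrade the partial factorization data into the stricter compatibility identities. Essentiality of $\mathsf{A}$ and $\mathsf{B}$ is expected to play a decisive role, since it ensures nonvanishing rows and columns of the matrices $\mathsf{A}^k$ and $\mathsf{B}^k$ and thereby allows one to isolate the offending terms in the intertwining identities $\mathsf{A}\mathsf{R}=\mathsf{R}\mathsf{B}$ and $\mathsf{B}\mathsf{S}=\mathsf{S}\mathsf{A}$ and reassemble them into a compatible pair.

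The main obstacle is precisely this last step. One subtlety is that entries must remain in $\mathbb{N}$, so cancellation tricks that would work over $\mathbb{Z}$ are unavailable; the upgrade to compatibility must be performed positively, either via an explicit block construction or by translating the problem into the C*-correspondence framework developed earlier in the paper and exploiting the ensuing bimodule language (this is in line with the paper's stated method of viewing shift equivalence relations through C*-correspondences). Once aligned SE $\Rightarrow$ compatible SE and balanced SE $\Rightarrow$ compatible SE are established, combining with \cite{CDE} closes the cycle and yields the coincidence with SSE.
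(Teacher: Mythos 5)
There is a genuine gap at exactly the point you identify as ``the heart of the proof.'' You never actually supply the argument for aligned $\Rightarrow$ compatible or balanced $\Rightarrow$ compatible; you only describe a strategy (``manufacture new matrices $\mathsf{R}',\mathsf{S}'$ by composing with auxiliary matrices, or pass to a higher lag and normalize'') without saying what the construction is or why it would produce the compatibility identities. Moreover, the strategy is aimed at the wrong level of structure. The aligned, balanced and compatible conditions of Definition~\ref{dfn-relation-matrices} are not extra matrix equations over $\mathbb{N}$ to be ``reassembled'' from the intertwining identities $\mathsf{A}\mathsf{R}=\mathsf{R}\mathsf{B}$ and $\mathsf{B}\mathsf{S}=\mathsf{S}\mathsf{A}$; they are coherence identities among the \emph{path isomorphisms} $\varphi_\mathsf{R},\varphi_\mathsf{S},\psi_\mathsf{A},\psi_\mathsf{B}$, i.e.\ among explicit bijections of edge sets. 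Your remarks about cancellation being unavailable over $\mathbb{N}$ and about ``isolating offending terms'' in the matrix identities suggest you are trying to work with the matrices alone, which cannot see the difference between the three relations at all: any shift equivalence of matrices admits \emph{some} choice of path isomorphisms, and the whole content of the theorem is about which coherence conditions those bijections can be made to satisfy.

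What the paper actually proves (Proposition~\ref{p:aligned->balanced} and Theorem~\ref{thm:balanced->compatible}) is stronger and simpler in structure than what you propose: for a \emph{fixed} concrete shift $(\mathsf{R},\mathsf{S},\varphi_\mathsf{R},\varphi_\mathsf{S},\psi_\mathsf{A},\psi_\mathsf{B})$, the three conditions are equivalent --- no new matrices or change of lag is needed. The cycle is: compatible $\Rightarrow$ aligned (from \cite{CDE}), aligned $\Rightarrow$ balanced (an induction on $\ell$ showing $(\id_\mathsf{R}\times\varphi_\mathsf{S}^{(\ell)})(\varphi_\mathsf{R}^{(\ell)}\times\id_\mathsf{S})=(\psi_\mathsf{A}^{-1}\times\id_{\mathsf{A}^\ell})(\id_{\mathsf{A}^\ell}\times\psi_\mathsf{A})$), and balanced $\Rightarrow$ compatible (a direct chase on edges, where essentiality is used only to guarantee that every edge of $E_\mathsf{R}$ or $E_\mathsf{S}$ can be extended by an edge of $E_\mathsf{S}$ or $E_\mathsf{R}$, since the rows of $\mathsf{R}$ and $\mathsf{S}$ are nonzero). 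Combined with the result of \cite{CDE} that compatible shift equivalence equals SSE, this gives the theorem. Your outer reduction to \cite{CDE} is correct, and your observation that essentiality must enter is on target, but the central combinatorial argument --- the only nontrivial content of the theorem --- is absent from your proposal.
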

 
A consequence of Theorem~\ref{thm_cab} is that in order to determine whether SSE is decidable it is equivalent to determine whether one of aligned shift equivalence or balanced shift equivalence is decidable. It is unknown to the authors whether these new relations are decidable, but algorithms implementing aligned shift equivalence for a fixed lag perform better than those for some of the other, more complicated-seeming equivalence relations.  As aligned, balanced, and compatible shift equivalence characterizes SSE, they may prove useful in answering open questions to determine when SE matrices are SSE. For example, it is known that $\mathsf{A}_k$ and $\mathsf{B}_k$ are SE for all $k \geq 1$, where
\[
\mathsf{A}_k = \begin{bmatrix} 1 & k \\ k-1 & 1 \end{bmatrix} \quad \text{and}  \quad \mathsf{B}_k = \begin{bmatrix} 1 & k (k-1) \\ 1 & 1 \end{bmatrix}
\]
but whether $\mathsf{A}_k$ and $\mathsf{B}_k$ are SSE for all $k \geq 4$ remains open.  When $k=3$, Baker showed that $\mathsf{A}_3$ and $\mathsf{B}_3$ are SSE (see \cite[Example~7.3.12]{LM95}).

The emergent shift equivalence relations discussed above were discovered through the lens of C*-correspondences, which are used to define ``quantized" analogues of the relations. Given any matrix with entries in $\mathbb{N}$ we may naturally associate a C*-correspondence (sometimes called a graph C*-correspondence) to the matrix in such a way that matrix multiplication translates to taking the interior tensor product of C*-correspondences (see Section \ref{sec_prelim}).  

Inspired by this natural association, Muhly, Pask, and Tomforde \cite{MPT08} enhanced the connection between SFTs and operator algebras by introducing the notion of SE and SSE for C*-correspondences. In their paper \cite[Remark 5.5]{MPT08} they ask whether SSE of two C*-correspondences $X$ and $Y$ implies the SSE of their so-called ``Pimsner dilations" $X_{\infty}$ and $Y_{\infty}$, as another mean of proving that when two C*-correspondences are SSE, then their Cuntz-Pimsner algebras are strong Morita equivalent. This question was taken up by Kakariadis and Katsoulis \cite{KK-JFA2014}, who claimed a positive solution. However, in their \cite[Theorem 5.5]{KK-JFA2014} there is a gap where it is assumed \emph{with} loss of generality that the intermediary C*-correspondences realizing SSE are all regular. Our second main result bridges this gap.

\begin{theoremintro} \label{thm:to-reg-or-full}
Let $X$ and $Y$ be regular (or both regular and full) C*-correspondences, and suppose that there are C*-correspondences $R_1,...,R_m$ and $S_1,...,S_m$ such that
\begin{align*}
X & \cong R_1\otimes S_1 & S_m \otimes R_m & \cong Y 
\end{align*} 
$$
S_i \otimes R_i \cong R_{i+1} \otimes S_{i+1} \ \ \ \ \ \text{for all} \ \ \ \ \  i=1,...,m-1.
$$
Then $R_1,...,R_m$ and $S_1,...,S_m$ can be chosen so that $S_i \otimes R_i$ (and therefore $R_{i+1} \otimes S_{i+1}$) are regular (or both regular and full, respectively) for every $i=1,...,m-1$.
\end{theoremintro}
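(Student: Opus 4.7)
The plan is to rectify each intermediate tensor product by passing to Katsura's ideal of its coefficient algebra, and to show that the hypothesis of regularity at the two endpoints propagates inward to force these ideals to be large enough to preserve the chain. Set $Z_i := S_i \otimes R_i \cong R_{i+1} \otimes S_{i+1}$ for $1 \le i \le m-1$, together with $Z_0 := X$ and $Z_m := Y$; let $B_i$ be the coefficient algebra of $Z_i$, so $R_i$ is a $B_{i-1}$--$B_i$ correspondence and $S_i$ a $B_i$--$B_{i-1}$ correspondence. For each $i$ let
\[
J_i := \phi_{Z_i}^{-1}(\mathcal{K}(Z_i)) \cap (\ker \phi_{Z_i})^\perp \subseteq B_i
\]
be the Katsura ideal of $Z_i$. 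Since $Z_0$ and $Z_m$ are regular, $J_0 = A$ and $J_m = B_m$; and by construction, the compression $J_i \cdot Z_i \cdot J_i$ is a regular $J_i$-correspondence for every $i$.

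I would then replace the given $R_i, S_i$ by the compressed correspondences $\widetilde R_i := J_{i-1} R_i J_i$ and $\widetilde S_i := J_i S_i J_{i-1}$, viewed as $J_{i-1}$--$J_i$ and $J_i$--$J_{i-1}$ correspondences respectively. A routine manipulation using only associativity of interior tensor products yields $\widetilde S_i \otimes_{J_{i-1}} \widetilde R_i \cong J_i Z_i J_i$ for every $i$, so the new intermediate products are regular and the SSE chain automatically closes at each middle position (both sides being $J_i Z_i J_i$). The substantive identification to verify is
\[
\widetilde R_i \otimes_{J_i} \widetilde S_i \cong J_{i-1} Z_{i-1} J_{i-1},
\]
which at $i=1$ and $i=m$ specializes to the endpoint conditions $\widetilde R_1 \otimes \widetilde S_1 \cong X$ and $\widetilde S_m \otimes \widetilde R_m \cong Y$. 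The ``regular and full'' case is handled identically after intersecting each $J_i$ with the fullness ideal $\overline{\langle Z_i, Z_i\rangle_{B_i}}$ and propagating the intersection along the chain.

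The main obstacle, and the place where the endpoint regularity is truly consumed, is the identification displayed above. Unwinding definitions, it amounts to showing $R_i \otimes_{B_i} \phi_{S_i}(J_i) S_i$ is dense in $R_i \otimes_{B_i} S_i$, or equivalently that the ideal $\overline{\langle R_i, R_i\rangle_{B_i}} \subseteq B_i$ generated by the right inner products of $R_i$ already sits inside $J_i$, so that $J_i$ acts as an approximate identity on $S_i$ visible through the tensor. I would prove this inductively, starting from $i = 1$: regularity of $X = R_1 \otimes S_1$ forces $\phi_{R_1}: A \to \mathcal{L}(R_1)$ to be injective with image in $\mathcal{K}(R_1)$, and unwinding the rank-one structure of $\mathcal{K}(X) = \mathcal{K}(R_1 \otimes S_1)$ shows that every $b \in \overline{\langle R_1, R_1\rangle_{B_1}}$ yields $\phi_{Z_1}(b) = \phi_{S_1}(b) \otimes 1_{R_1} \in \mathcal{K}(Z_1)$, while the injectivity of $\phi_X$ places $b$ into $(\ker \phi_{Z_1})^\perp$; together this puts $b$ into $J_1$. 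Once $J_1 Z_1 J_1$ is confirmed regular, the same argument steps to $i = 2$, and a symmetric argument propagating backward from $J_m = B_m$ using regularity of $Y$ closes the remaining cases. The fullness refinement runs in parallel with the fullness ideal in place of (or intersected with) the Katsura ideal.
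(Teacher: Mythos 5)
Your overall strategy --- rectify each intermediate node $Z_i$ by an ideal of its coefficient algebra and compress the $R_i,S_i$ to the corresponding corners --- is in the spirit of the paper's argument, but the specific device you chose (a single compression by the Katsura ideal $J_i=\phi_{Z_i}^{-1}(\K(Z_i))\cap(\ker\phi_{Z_i})^\perp$) does not work, and the step you yourself flag as ``the substantive identification'' rests on a claim that is false. Concretely, you assert that regularity of $X=R_1\otimes S_1$ forces $\overline{\langle R_1,R_1\rangle}\subseteq J_1$, via ``$\phi_{Z_1}(b)=\phi_{S_1}(b)\otimes 1_{R_1}\in\K(Z_1)$ for $b\in\langle R_1,R_1\rangle$.'' Here is a counterexample. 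Let $\A=\bbC$, $\B=c_0(\bbN)$, let $\mathsf{R}$ be the $1\times\bbN$ all-ones row and $\mathsf{S}$ the $\bbN\times 1$ column $(1,0,0,\dots)^T$, and let $R=X(\mathsf{R})$, $S=X(\mathsf{S})$ be the associated graph correspondences. Then $\mathsf{R}\mathsf{S}=[1]$, so $X=R\otimes_\B S\cong\bbC$ is regular (and full), while $\mathsf{S}\mathsf{R}$ is the $\bbN\times\bbN$ matrix whose first row is all ones and whose other rows vanish. Hence for $Z_1=S\otimes R$ the left action of $\delta_1$ is a projection of infinite rank (not in $\K(Z_1)$) and $\delta_w\in\ker\phi_{Z_1}$ for $w\geq 2$; consequently $\phi_{Z_1}^{-1}(\K(Z_1))=\{a:a(1)=0\}$, $(\ker\phi_{Z_1})^\perp=\bbC\delta_1$, and $J_1=0$, whereas $\overline{\langle R,R\rangle}=c_0(\bbN)$. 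Your corners $\widetilde R_1=R J_1$ and $\widetilde S_1=J_1 S$ are then zero, so $\widetilde R_1\otimes\widetilde S_1=0\not\cong X$, and the endpoint condition fails at the very first link. The same example shows that your preliminary assertion ``by construction, $J_iZ_iJ_i$ is a regular $J_i$-correspondence'' is unjustified: compression by an ideal does not by itself deliver either injectivity (an element of $J_i$ may annihilate the corner $Z_iJ_i$ without annihilating $Z_i$) or properness relative to $\K(J_iZ_iJ_i)$.

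The structural point your proposal misses is that injectivity cannot be restored by passing to corners at all; it must be achieved by \emph{quotienting} the coefficient algebras. This is why the paper's proof alternates two genuinely different operations: first a quotient $Z\mapsto Z_{\I_Z}$ by the smallest fully $Z$-invariant ideal (Proposition \ref{prp:to-injective}), which produces injective correspondences and, by Lemma \ref{l:inv-composition}, is automatically compatible with the tensor factorizations; only then a compression by $\I=\phi_{Z}^{-1}(\K(Z))$ (Proposition \ref{prp:to-regular}) to achieve properness, followed by a further quotient to restore injectivity, and a separate compression by $\langle Z,Z\rangle$ plus yet another quotient for the full case. Moreover, the fact that the compressed chain still closes up --- your ``routine manipulation using only associativity'' --- is exactly the content of Lemma \ref{l:invariant-ideal} ($\I R=R\J'$ and $\J S=S\I'$ with $\I'=\phi_R^{-1}(\K(R))$, $\J'=\phi_S^{-1}(\K(S))$) combined with an approximate-identity argument; that lemma in turn requires the prior injectivization (it invokes \cite[Lemma~4.2]{FMR-IUMJ2003}, which needs $\phi_R,\phi_S$ injective). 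So the correct order of operations, and the quotient construction itself, are not optional refinements of your plan but the mechanism that makes the theorem true; as written, your argument has no way to repair the situation in the example above.
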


This provides the final missing ingredient to resolve the question posed by Muhly, Pask and Tomforde in \cite[Remark 5.5]{MPT08}, and in Remark \ref{rmk:equiv-stab-iso} we sketch a proof of this when the coefficient C*-algebras are $\sigma$-unital, and the C*-correspondences are regular and full.

As an application of Theorem \ref{thm:to-reg-or-full} we are also able to resolve the so-called ``shift equivalence problem for C*-correspondences" posed by Kakariadis and Katsoulis in \cite[Section 6]{KK-JFA2014}. More precisely, the shift equivalence problem for C*-correspondences asks if SE of two regular C*-correspondences implies SSE of the C*-correspondences. If we specialize to finite essential matrices with entries in $\mathbb{N}$, the second author with Carlsen and Eilers \cite[Proposition 3.5]{CDE} proved that SE of the C*-correspondences associated to the matrices is equivalent to SE of the underlying matrices. Our third main result applies Theorem \ref{thm:to-reg-or-full} to obtain the SSE analogue.

\begin{theoremintro} \label{thm:msse=sse}
The relations on finite essential matrices with entries in $\mathbb{N}$ given by SSE and SSE of their associated C*-correspondences are equal.
 \end{theoremintro}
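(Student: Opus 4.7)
The forward direction is immediate: given matrices $\mathsf{R}_1,\ldots,\mathsf{R}_m$ and $\mathsf{S}_1,\ldots,\mathsf{S}_m$ realizing SSE of $\mathsf{A}$ and $\mathsf{B}$ at the matrix level, the associated graph C*-correspondences $R_i$ of $\mathsf{R}_i$ and $S_i$ of $\mathsf{S}_i$ realize SSE at the correspondence level. This is because the association of a graph C*-correspondence to a matrix is multiplicative, sending matrix products to interior tensor products.

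For the converse, assume the associated correspondences $X_\mathsf{A}$ and $X_\mathsf{B}$ are SSE with lag $m$ via $R_1,\ldots,R_m$ and $S_1,\ldots,S_m$, with intermediate coefficient algebras $c_0(V)=C_0, C_1,\ldots,C_{m-1}, C_m=c_0(W)$. Since $\mathsf{A}$ and $\mathsf{B}$ are essential, $X_\mathsf{A}$ and $X_\mathsf{B}$ are regular and full. The plan is to apply Theorem~\ref{thm:to-reg-or-full} (regular-and-full version) to replace the intermediaries so that each $S_i\otimes R_i\cong R_{i+1}\otimes S_{i+1}$ is regular and full over $C_i$, and then to further reduce each $C_i$ to a commutative finite-dimensional algebra of the form $c_0(V_i)$ for some finite set $V_i$. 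Once this is achieved, each $R_i$ is a $c_0(V_{i-1})$-$c_0(V_i)$ correspondence, which decomposes along the minimal projections $e_v$ and $e_w$ into finite-dimensional pieces $e_v R_i e_w$, and is canonically the graph C*-correspondence of a matrix $\mathsf{R}_i$ over $V_{i-1}\times V_i$ with entries in $\mathbb{N}$; likewise for each $S_i$. The correspondence SSE chain then transports step by step into a matrix SSE of $\mathsf{A}$ and $\mathsf{B}$ of lag $m$.

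The main obstacle is the reduction of the intermediate coefficient algebras $C_i$ to commutative finite-dimensional ones while preserving the SSE chain. I would approach this Morita-theoretically: fullness and regularity of $S_i\otimes R_i$ over $C_i$ ensure that $C_i$ embeds as a full, hereditary subalgebra of $\mathcal{K}(S_i\otimes R_i)$, and the commutative $c_0(V)$-action on $R_1$ on the left (together with its counterpart $c_0(W)$ acting through $S_m$ on the right) propagates along the chain via the intermediate tensor product identifications. The hope is that this commutative action, combined with the rigidity afforded by Theorem~\ref{thm:to-reg-or-full}, can be used to single out at each intermediate stage a commutative finite-dimensional subquotient of $C_i$ that is Morita equivalent to $C_i$ and over which the whole SSE chain can be replayed. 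The heart of the argument will be executing this inductive replacement so that each elementary step of the chain is preserved, and this is precisely where the flexibility granted by Theorem~\ref{thm:to-reg-or-full} should be indispensable.
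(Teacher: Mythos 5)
Your forward direction is fine and matches the paper: the assignment $\mathsf{R} \mapsto X(\mathsf{R})$ sends matrix products to interior tensor products, so a matrix SSE transports directly to a correspondence SSE. In the converse, however, the step you yourself flag as ``the main obstacle'' --- reducing the intermediate coefficient algebras $\C_i$ to commutative finite-dimensional algebras $c_0(V_i)$ --- is a genuine gap, and the route you sketch would not close it. Your plan presupposes that each $\C_i$ admits a commutative finite-dimensional subquotient Morita equivalent to it; an arbitrary C*-algebra has no such subquotient, and nothing in your argument constrains $\C_i$ at all (the left $c(V)$-action on $R_1$ says nothing about the right Hilbert module structure of $R_1$ over $\C_1$, which is what governs $\C_1$). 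The missing idea, which is the heart of the paper's proof, is to propagate the property of being a C*-subalgebra of the compact operators backwards along the chain: since $\C_m = c(W)$ is a subalgebra of compacts, a theorem of Baki\'c--Gulja\v{s} gives that $\K(R_m)$ is a subalgebra of compacts, and regularity of $R_m$ (obtained from Theorem~\ref{thm:to-reg-or-full}) embeds $\C_{m-1}$ into $\K(R_m)$; iterating shows every $\C_i$ is of the form $\oplus_{v} \bbK(\H_v)$. This is exactly the input your Morita-theoretic reduction would need, and it cannot be extracted from fullness and regularity of the $S_i \otimes R_i$ alone.

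Even granting that each $\C_i$ is a subalgebra of compacts, two further steps remain that your sketch omits. First, one needs a classification of \emph{all} correspondences over such algebras: the paper proves, via a decomposition into irreducible sub-bimodules and an identification of each irreducible piece with $\H_v \otimes_{\mathbb{C}} \H_w$, that every $\oplus_v \bbK(\H_v)$--$\oplus_w \bbK(\H_w)$ correspondence is unitarily isomorphic to $X(\mathsf{C})$ for a \emph{unique} matrix $\mathsf{C}$ with cardinal entries, and that tensor products go to matrix products. Your claim that each $R_i$ ``is canonically the graph C*-correspondence of a matrix over $V_{i-1}\times V_i$ with entries in $\mathbb{N}$'' assumes this classification and, moreover, assumes finiteness of the index sets and of the multiplicities, neither of which is automatic: the matrices produced by the classification may have infinite index sets and infinite cardinal entries. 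The paper needs a separate combinatorial pruning argument --- exploiting that $\mathsf{A}$ and $\mathsf{B}$ are finite with entries in $\mathbb{N}$, working from both ends of the chain, and taking unions of the resulting finite index sets --- to cut the matrices down to finite $\mathbb{N}$-matrices while preserving all the identities. As written, your proposal correctly identifies the reductions at the two ends of the argument but leaves its entire middle as a ``hope.''
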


As a consequence, and through the Kim--Roush counterexample to Williams' problem \cite{KR99}, in Corollary \ref{cor:sep-neg} we show that SSE for regular and full C*-correspondences is strictly stronger than SE for them. Thus, the shift equivalence problem for C*-corresponden\-ces has a negative answer. Finally, as another application of our results we show that SSE of C*-correspondences of adjacency matrices is not equivalent to them being strong shift equivalent with lag $m=1$, answering yet another question of Kakariadis and Katsoulis from \cite{KK-JFA2014}.

The paper is organized as follows. In Section~\ref{sec_prelim}, we give the definitions of the main objects of the paper. In Section~\ref{sec_eq_CDE} we define aligned and balanced shift relations as they appeared in \cite{CDE} and prove that these coincide with SSE. In Section~\ref{sec-SSE-cor} we prove that SSE of regular C*-correspondences can be implemented via regular C*-correspondences, and explain how this fixes gaps in the literature. Finally, in Section~\ref{sec-modSSE-implies-SSE} we show that for finite essential matrices, SSE of their associated graph C*-correspondences coincides with SSE of the matrices, and explain how this resolves the shift equivalence problem for C*-correspondence in the negative.

\vspace{6pt}

\textbf{Acknowledgments.}
The authors are grateful to Toke Meier Carlsen and S\o ren Eilers for sharing notes on compatible shift equivalence, as well as resulting computer experiments on aligned shift equivalence. The second-named author is also grateful to Idan Pazi for his participation in a research exposure week for undergraduate students at the Technion in September 2022 where open questions on aligned shift equivalence were explored.
 
\section{Preliminary}\label{sec_prelim}

In this section we recall some of the necessary theory. We recommend \cite{LM95} for the basics of the theory of SFTs and \cite{Lan95} for the theory of C*-correspondences. 

\begin{dfn}
Let $\mathsf{A}$ and $\mathsf{B}$ be square matrices indexed by sets $V$ and $W$ respectively, with entries in $\bbN$ and let $m$ be a positive integer.  We say that $\mathsf{A}$ and $\mathsf{B}$ are \emph{shift equivalent with $\lag$ $m$} if there are matrices $\mathsf{R}$ over $V \times W$ and $\mathsf{S}$ over $W\times V$ with entries in $\bbN$ such that 
\begin{align*}
\mathsf{A}^m &= \mathsf{R}\mathsf{S}  & \mathsf{S}\mathsf{R} &= \mathsf{B}^m \\
\mathsf{A}\mathsf{R} &=\mathsf{R}\mathsf{B} & \mathsf{B}\mathsf{S} &= \mathsf{A}\mathsf{R}.
\end{align*}

We say that $\mathsf{A}$ and $\mathsf{B}$ are \emph{elementary shift related} if they are shift equivalent with lag $m=1$, and that they are \emph{strong shift equivalent with lag $m$} if it takes $m$ steps to get from $\mathsf{A}$ to $\mathsf{B}$ in the transitive closure of the elementary shift relation.
\end{dfn}

A directed graph $G=(V,E,s,r)$ is composed of a set of vertices $V$ and a set of edges $E$, along with source and range maps $s,r: E \rightarrow V$. We say that $G$ is finite if $V$ and $E$ are \emph{finite}. In what follows we identify a given cardinality $\kappa$ with the smallest ordinal of cardinality $\kappa$ whenever order is concerned. This ensures that the cardinality of the set of ordinals $\alpha$ for which $0 \leq \alpha < \kappa$ is exactly $\kappa$. Following \cite{CDE}, for a $V \times W$ matrix $\mathsf{F} = [\mathsf{F}_{ij}]$ with cardinal entries, we denote
$$
E_\mathsf{F} := \{ (v, \alpha, w) \ \mid \  0 \leq \alpha < \mathsf{F}_{vw}, v \in V, w \in W\},
$$ 
and similarly define $s: E_F \rightarrow V$ and $r: E_F \rightarrow W$ via $s(v, \alpha, w) = v$ and $r(v, \alpha, w) = w$ respectively. We will say that $\mathsf{F}$ is \emph{essential} if it has no zero rows and no zero columns. When $V = W$, this makes $G_\mathsf{F} := (V, E_\mathsf{F}, r, s )$ into a directed graph in its own right.  We call a collection of matrices $\{\mathsf{ A}_i \}_{i=1}^m$ \emph{admissible} provided that the indexing set for the columns $\mathsf{A}_i$ and the indexing set for the rows of $\mathsf{A}_{i+1}$ are equal.  For an admissible collection of matrices $\{ \mathsf{A}_i \}_{i=1}^m$, we denote the \emph{fibered product}
$$
\prod_{i=1}^m E_{\mathsf{A}_i} =E_{\mathsf{A}_1} \times  E_{\mathsf{A}_2} \times \cdots \times E_{\mathsf{A}_m}:= \{ (e_1,e_2, \dots , e_m) \ \mid e_i \in E_{\mathsf{A}_i}, r(e_i)=s(e_{i+1}) \}.
$$
We will also denote $(e_1,e_2, \dots , e_m) \in\prod_{i=1}^m E_{\mathsf{A}_i}$ by $e_1 e_2 \cdots e_m$.  Thus, elements of $\prod_{i=1}^m E_{\mathsf{A}_i}$ can be thought as a concatenation of edges.  When $V= W$, $E_\mathsf{C}^n$ will denoted the $n$-fold product of $E_\mathsf{C}$ which can be naturally identified with $E_{\mathsf{C}^n}$.

Let $\A$ be a C*-algebra. A right Hilbert $\A$-module $X$ is a linear space with a right action of the C*-algebra $\A$ and an $\A$-valued inner product $\langle \cdot, \cdot \rangle$ such that $X$ is complete with respect to the induced norm $\| \xi \|:= \| \langle \xi,\xi \rangle \|^{1/2}$ for $\xi \in X$.

For a Hilbert $\A$-module $X$ we denote by $\L(X)$ the C*-algebra of all adjointable operators on $X$. For $\xi,\eta \in X$, the rank-one operator $\theta_{\xi,\eta} \in \L(X)$ is defined by $\theta_{\xi,\eta}(\zeta) = \xi \langle \eta,\zeta \rangle$ for $\zeta \in X$. Then, the ideal of \emph{generalized compact operators} $\K(X)$ in $\L(X)$ is the closed linear span of elements of the form $\theta_{\xi,\eta}$ for $\xi,\eta \in X$.

Now let $\B$ be another C*-algebra. An \emph{$\A-\B$ correspondence} is a right Hilbert $\B$-module $X$ together with a $*$-representation $\phi_X : \A \rightarrow \L(X)$. When we have an $\A-\B$ correspondence, we will often treat it as an $\A-\B$ bimodule. We will say that $X$ is \emph{full} if $\overline{\mathrm{span}} \{ \langle \xi, \eta \rangle : \xi, \eta \in X \} = \B$, \emph{injective} if $\phi_X$ is an injection and \emph{proper} if $\phi_X$ has image in $\K(X)$. Moreover, we will say that $X$ is \emph{regular} if it is both injective and proper and \emph{irreducible} if for any non-trivial closed right $\B$-sub-bimodule $Y \subseteq X$ we have $Y=X$.

\begin{asmp} \label{a:nd}
We assume throughout the paper that correspondences are non-degenerate in the sense that $\phi_X(\A)X = X$, where $\phi_X(\A)X$ denotes the \emph{closure} of the linear span of elements of the form $\phi_X(a)\xi$ for $a\in \A$ and $\xi \in X$. 
\end{asmp}

We will say that two $\A-\B$ correspondences $X$ and $Y$ are \emph{unitarily isomorphic}, and denote this by $X\cong Y$, if there is an isometric bimodule surjection $\tau: X \rightarrow Y$, and call $\tau$ a \emph{unitary correspondence isomorphism}. Due to the polarization identity, a surjective bimodule map $\tau$ is a correspondence isomorphism if and only if for $\xi,\eta \in X$ we have
$$
\langle \tau(\xi),\tau(\eta) \rangle = \langle \xi,\eta \rangle.
$$

For an $\A-\B$ correspondence $X$ and a $\B-\C$ correspondence $Y$, we may define the interior tensor product $\A -\C$ correspondence $X \otimes_{\B} Y$ as follows. Let $X \odot_{\B} Y$ be the quotient of the algebraic tensor product by the subspace generated by elements of the form
$$
\xi b \otimes \eta - \xi \otimes b \eta, \ \ \text{for} \ \ \xi \in X, \ \eta \in Y, \ b \in \B.
$$
We define a $\C$-valued inner product and a left $\A$ action by setting
$$
\langle \xi \otimes \eta, \xi' \otimes \eta' \rangle = \langle \eta , \langle \xi ,\xi' \rangle \eta' \rangle, \ \ \text{for} \ \ \xi,\xi' \in X, \ \ \eta,\eta' \in Y
$$
$$
a \cdot (\xi \otimes \eta) = (a\xi) \otimes \eta, \ \ \text{for} \ \ \xi \in X, \ \ \eta \in Y, \ \ a \in \A,
$$
and denote by $X \otimes_{\B} Y$ the separated completion of $X \odot_{\B} Y$ with respect to the $\C$-valued semi-inner product above. It then follows that $X\otimes_{\B} Y$ is a $\A-\C$ correspondence. 

\begin{dfn}
Let $\A$ and $\B$ be C*-algebras.  A \emph{Hilbert $\A-\B$ bimodule} $X$ is a complex vector space which is both a right Hilbert $\B$-module and a left Hilbert $\A$-module, such that 
\begin{enumerate}
\item The left action of $\B$ is by adjointables on the right Hilbert $\A$-module structure, and the right action of $\A$ is by adjointables on the left Hilbert $\B$-module structure.

\item for all $x, y, z \in X$, 
$$
_\A \langle x, y \rangle \cdot z = x \cdot \langle y, z \rangle_\B.
$$
\end{enumerate}
If additionally we have that the right and left Hilbert C*-module structures on $X$ are full, we will say that $X$ is an \emph{imprimitivity $\A-\B$ bimodule}
\end{dfn}

Next, let $V$ and $W$ be sets, and denote by $\A = c(V)$ and $\B = c(W)$ the commutative C*-algebras consisting of functions on $V$ and $W$ respectively. Let $\mathsf{R}$ be a $V\times W$ matrix with cardinal entries. We define an $\A-\B$ correspondence $X(\mathsf{R})$ from $E_\mathsf{R}$ as follows.
Let $X(\mathsf{R})$ be the space of functions from $E_\mathsf{R}$ to $\bbC$.
The bimodule structure on $X(\mathsf{R})$ is defined by
\[
  (a\cdot \xi \cdot b)(e) = a(r(e))\xi(e) b(s(e))
\]
for all $a\in \A$, $b\in \B$, $\xi \in X(\mathsf{R})$, and $e\in E_\mathsf{R}$.
The $\B$-valued inner product on $X(\mathsf{R})$ is given by
\[
  \langle \xi, \eta\rangle(w) = \sum_{s(e)=w}\overline{\xi(e)}\eta(e)
\]
for all $\xi,\eta\in X(\mathsf{R})$ and $w\in W$. With this $c(W)$-valued inner product and $c(V)$ left action, $X(\mathsf{R})$ becomes a $\A-\B$ correspondence.
When the matrix $\mathsf{R}$ has entries in $\mathbb{N}$, is essential and has finite columns, the correspondence $X(\mathsf{R})$ is both full and regular. If $\mathsf{A}$ is a $V\times V$ matrix with cardinal entries, we say that $X(\mathsf{A})$ is the \emph{graph correspondence} of $G_{\mathsf{A}}$.

In Section \ref{sec-modSSE-implies-SSE} we will greatly generalize this construction to include coefficient C*-algebras that arbitrary subalgebras of compact operators. By standard representation theory of compact operators, these are always of the form $\A = \oplus_{v \in V} \bbK(\H_v)$ and $\B = \oplus_{w\in W} \bbK(\H_w)$, where by $\mathbb{K}(\H)$ we mean compact operators on the Hilbert space $\H$, and $V$ and $W$ are arbitrary sets. When the Hilbert spaces $\H_v$ and $\H_w$ are all $1$-dimensional and $R$ has cardinal entries, we will recover our previous construction because we then get that $\A \cong c(V)$ and $\B \cong c(W)$. 

We will often think of $X(\mathsf{R})$ as a sort of ``quantization" of the matrix $\mathsf{R}$. This is justified by \cite[Proposition 3.3]{CDE}, which shows that if $\mathsf{R}$ is a $V\times W$ matrix and $\mathsf{S}$ is a $W\times U$ matrix, both with cardinal entries, then there is a unitary isomorphism of correspondences
\[
    X(\mathsf{R})\otimes_{c(W)} X(\mathsf{S})\cong X(\mathsf{R}\mathsf{S}).
\]
This will also be generalized in Section \ref{sec-modSSE-implies-SSE} to our construction of graph correspondences when the underlying C*-algebras are subalgebras of compact operators.

\section{Balanced and Aligned shift equivalence for matrices}\label{sec_eq_CDE}

In this section we discuss two new equivalence relations for adjacency matrices which are called aligned and balanced shift relations. These have appeared in \cite{CDE} and are are motivated from the study of compatible shift equivalence, especially from the proof of its transitivity. Surprisingly, in the context of adjacency matrices, we are able to show that both aligned and balanced shift relations coincide with strong shift equivalence.

\begin{dfn}[{\cite[Definition~1.2]{CDE}}]\label{dfn-path-iso}
Let $\{ \mathsf{A}_i \}_{i=1}^m$ and $\{ \mathsf{B}_i \}_{i=1}^n$ be two collections of admissible matrices.  A \emph{path isomorphism} is a bijection $\varphi \colon\prod_{i=1}^m E_{\mathsf{A}_i} \to \prod_{i=1}^n E_{\mathsf{B}_i}$ such that $s( \varphi(e_1 \cdots e_m)) = s(e_1 \cdots e_m)$ and $r( \varphi(e_1 \cdots e_m))=r(e_1 \cdots e_m)$ for all $e_1 \cdots e_m \in \prod_{i=1}^m E_{\mathsf{A}_i}$.
\end{dfn}

Suppose $\mathsf{A}$ and $\mathsf{B}$ are shift equivalent with $\lag$ $m$ via matrices $\mathsf{R}$ over $V \times W$ and $\mathsf{S}$ over $W \times V$ with entries in $\bbN$.  Then it is clear that there are path isomorphisms
\begin{align*}
    \varphi_\mathsf{R} \colon E_\mathsf{A} \times E_\mathsf{R} \to E_\mathsf{R} \times E_\mathsf{B}, \quad & \varphi_\mathsf{S} \colon E_\mathsf{B} \times E_\mathsf{R} \to E_\mathsf{S} \times E_\mathsf{A} \\
    \psi_\mathsf{A} \colon E_\mathsf{R} \times E_\mathsf{S} \to E_\mathsf{A}^m, \quad & \psi_\mathsf{B} \colon E_\mathsf{S} \times E_\mathsf{R} \to E_\mathsf{B}^m.
\end{align*}
Moreover, if there are path isomorphisms 
\begin{align*}
    \varphi_\mathsf{R} \colon E_\mathsf{A} \times E_\mathsf{R} \to E_\mathsf{R} \times E_\mathsf{B}, \quad & \varphi_\mathsf{S} \colon E_\mathsf{B} \times E_\mathsf{S} \to E_\mathsf{S} \times E_\mathsf{A} \\
    \psi_\mathsf{A} \colon E_\mathsf{R} \times E_\mathsf{S} \to E_\mathsf{A}^m, \quad & \psi_\mathsf{B} \colon E_\mathsf{S} \times E_\mathsf{R} \to E_\mathsf{B}^m,
\end{align*}
then clearly
\begin{align*}
\mathsf{A}\mathsf{R} &=\mathsf{R}\mathsf{B} & \mathsf{B}\mathsf{S} &= \mathsf{S}\mathsf{A} \\ \mathsf{A}^m &= \mathsf{R}\mathsf{S}  & \mathsf{S}\mathsf{R} &= \mathsf{B}^m.
\end{align*}

New notions of shift equivalence emerge by imposing relations between path isomorphisms.

\begin{dfn}
Let $\mathsf{A}$ and $\mathsf{B}$ be matrices indexed by $V$ and $W$ respectively, with entries in $\bbN$ and let $m$ be a positive integer. We say that the tuple $(\mathsf{R}, \mathsf{S}, \varphi_\mathsf{R}, \varphi_\mathsf{S}, \psi_\mathsf{A}, \psi_\mathsf{B})$ is a \emph{concrete shift between $\mathsf{A}$ and $\mathsf{B}$ with $\lag$ $m$} if $\mathsf{R}$ is a matrix over $V \times W$ with entries in $\bbN$, $\mathsf{S}$ is a matrix over $W\times V$ with entries in $\bbN$, and each of the maps
\begin{align*}
    \varphi_\mathsf{R} \colon E_\mathsf{A} \times E_\mathsf{R} \to E_\mathsf{R} \times E_\mathsf{B}, \quad & \varphi_\mathsf{S} \colon E_\mathsf{B} \times E_\mathsf{S} \to E_\mathsf{S} \times E_\mathsf{A} \\
    \psi_\mathsf{A} \colon E_\mathsf{R} \times E_\mathsf{S} \to E_\mathsf{A}^m, \quad & \psi_\mathsf{B} \colon E_\mathsf{S} \times E_\mathsf{R} \to E_\mathsf{B}^m
\end{align*}
are path isomorphisms.
\end{dfn}

Let $\mathsf{A}$ and $\mathsf{B}$ be matrices indexed by $V$ and $W$ respectively, with entries in $\bbN$.  Suppose there exists a matrix $\mathsf{R}$ with entries in $\bbN$ and suppose there exists a path isomorphism $\varphi \colon E_\mathsf{A} \times E_\mathsf{R} \to E_\mathsf{R} \times E_\mathsf{B}$.  For any positive integer $m$ we denote by $\varphi^{(m)} : E_\mathsf{A}^m \times E_\mathsf{R} \rightarrow E_\mathsf{R} \times E_\mathsf{B}^m$ the path isomorphism defined by 
$$
\varphi^{(m)} = (\varphi \times \id_{\mathsf{B}^{m-1}} )(\id_\mathsf{A} \times \varphi \times \id_{\mathsf{B}^{m-2}} ) \cdots (\id_{\mathsf{A}^{m-1}} \times \varphi).
$$

\begin{dfn}\label{dfn-relation-matrices}
Let $\mathsf{A}$ and $\mathsf{B}$ be matrices indexed by $V$ and $W$ respectively, with entries in $\bbN$ and let $(\mathsf{R},\mathsf{S}, \varphi_\mathsf{R}, \varphi_\mathsf{S}, \psi_\mathsf{A}, \psi_\mathsf{B})$ be a concrete shift between $\mathsf{A}$ and $\mathsf{B}$ with $\lag$ $m$. We say that $(\mathsf{R}, \mathsf{S}, \varphi_\mathsf{R}, \varphi_\mathsf{S}, \psi_\mathsf{A}, \psi_\mathsf{B})$ is
\begin{enumerate}
\item \emph{aligned} if 
\begin{align*}
    (\psi_\mathsf{A} \times \id_\mathsf{A})(\id_\mathsf{R} \times \varphi_\mathsf{S})(\varphi_\mathsf{R} \times \id_\mathsf{S}) &= (\id_\mathsf{A} \times \psi_\mathsf{A}) \\
    (\psi_\mathsf{B} \times \id_\mathsf{B})(\id_\mathsf{S} \times \varphi_\mathsf{R})(\varphi_\mathsf{S} \times \id_\mathsf{R}) &= (\id_\mathsf{B} \times \psi_\mathsf{B}).
\end{align*}
		
\item \emph{balanced} if 
\begin{align*}
\psi_\mathsf{A}^{-1} \times \psi_\mathsf{A} &= (\id_\mathsf{R} \times \varphi_\mathsf{S}^{(m)}) ( \varphi_\mathsf{R}^{(m)} \times \id_\mathsf{S})  \\
\psi_\mathsf{B}^{-1} \times \psi_\mathsf{B} &= (\id_\mathsf{S} \times \varphi_\mathsf{R}^{(m)}) ( \varphi_\mathsf{S}^{(m)} \times \id_\mathsf{R}).
\end{align*}
		
\item \emph{compatible} if 
\begin{align*}
\varphi_\mathsf{R}^{(m)} &= (\id_\mathsf{R} \times \psi_\mathsf{B})(\psi_\mathsf{A}^{-1} \times \id_\mathsf{R} ) \\  
\varphi_\mathsf{S}^{(m)} &= (\id_\mathsf{S} \times \psi_\mathsf{A})(\psi_\mathsf{B}^{-1} \times \id_\mathsf{S} ).
\end{align*}

\end{enumerate}
We say that $\mathsf{A}$ and $\mathsf{B}$ are \emph{aligned /  balanced / compatible} shift related with $\lag$ $m$ if there exists a concrete shift between $\mathsf{A}$ and $\mathsf{B}$ with $\lag$ $m$ which is aligned /  balanced / compatible respectively.
\end{dfn}

Note that we implicitly use the ``associativity" path isomorphism $E_\mathsf{A} \times E_\mathsf{A}^m \cong E_\mathsf{A}^m \times E_\mathsf{A}$ given by $( e, (e_1, e_2, \ldots, e_m)) = ( (e_1, e_2, \ldots, e_{m-1}) , e_m )$ as this is the canonical choice when identifying $E_\mathsf{A} \times E_\mathsf{A}^m$ and $E_\mathsf{A}^m \times E_\mathsf{A}$ with $E_\mathsf{A}^{m+1}$ via path isomorphisms.

By \cite[Proposition~4.4]{CDE}, compatible shift relation on the set of essential matrices with entries in $\mathbb{N}$ is an equivalence relation. Moreover, by \cite[Theorem~1.3]{CDE} strong shift equivalence coincides with compatible shift equivalence.  One of the consequence of our proof is that aligned, balanced, and compatible shift relations coincide on essential matrices with entries in $\bbN$ is that both aligned and balanced shift relations are equivalence relations as well. 

A consequence of the proof of \cite[Lemma~4.2]{CDE} is that if a concrete shift $(\mathsf{R}, \mathsf{S}, \varphi_\mathsf{R}, \varphi_\mathsf{S}, \psi_\mathsf{A}, \psi_\mathsf{B})$ is compatible, then it is both aligned and balanced (see \cite[Remark~4.3]{CDE}). The following shows that if a concrete shift is aligned, then it is balanced.

\begin{prp} \label{p:aligned->balanced}
Suppose $\mathsf{A}$ and $\mathsf{B}$ are essential square matrices over $\bbN$, and $\mathsf{R}$ and $\mathsf{S}$ some matrices over $\bbN$. Suppose that $(\mathsf{R}, \mathsf{S}, \varphi_\mathsf{R}, \varphi_\mathsf{S}, \psi_\mathsf{A}, \psi_\mathsf{B})$ is a concrete shift between $\mathsf{A}$ and $\mathsf{B}$ with $\lag$ $m$. If the tuple $(\mathsf{R}, \mathsf{S}, \varphi_\mathsf{R}, \varphi_\mathsf{S}, \psi_\mathsf{A}, \psi_\mathsf{B})$ is aligned, then it is balanced. 
\end{prp}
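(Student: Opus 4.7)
My plan is to prove the balanced identities by iterating the aligned identities $m$ times. The core claim is the ``$n$-fold aligned'' identity
\begin{equation*}
(\psi_\mathsf{A} \times \id_{\mathsf{A}^n})(\id_\mathsf{R} \times \varphi_\mathsf{S}^{(n)})(\varphi_\mathsf{R}^{(n)} \times \id_\mathsf{S}) = \id_{\mathsf{A}^n} \times \psi_\mathsf{A}
\end{equation*}
of path isomorphisms $E_\mathsf{A}^n \times E_\mathsf{R} \times E_\mathsf{S} \to E_\mathsf{A}^{n+m}$ for every $n \geq 1$, together with its $\mathsf{B}$-sided mirror obtained by swapping the roles of $(\mathsf{A},\mathsf{R},\psi_\mathsf{A},\varphi_\mathsf{R})$ and $(\mathsf{B},\mathsf{S},\psi_\mathsf{B},\varphi_\mathsf{S})$. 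Taking $n=m$ on the $\mathsf{A}$-side and composing on the left with $\psi_\mathsf{A}^{-1}\times\id_{\mathsf{A}^m}$ then recovers the first balanced identity of Definition~\ref{dfn-relation-matrices} (recalling that $\psi_\mathsf{A}^{-1}\times\psi_\mathsf{A}$ denotes the composition $(\psi_\mathsf{A}^{-1}\times\id_{\mathsf{A}^m})(\id_{\mathsf{A}^m}\times\psi_\mathsf{A})$); the $\mathsf{B}$-sided mirror yields the second.

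I would prove the $n$-fold identity by induction on $n$. The base case $n=1$ is exactly the aligned hypothesis. For the inductive step I would use the recursive decompositions
\begin{equation*}
\varphi_\mathsf{R}^{(n+1)} = (\varphi_\mathsf{R} \times \id_{\mathsf{B}^n})(\id_\mathsf{A} \times \varphi_\mathsf{R}^{(n)}), \qquad \varphi_\mathsf{S}^{(n+1)} = (\varphi_\mathsf{S} \times \id_{\mathsf{A}^n})(\id_\mathsf{B} \times \varphi_\mathsf{S}^{(n)}),
\end{equation*}
which let me split
\begin{equation*}
T_{n+1} := (\psi_\mathsf{A} \times \id_{\mathsf{A}^{n+1}})(\id_\mathsf{R} \times \varphi_\mathsf{S}^{(n+1)})(\varphi_\mathsf{R}^{(n+1)} \times \id_\mathsf{S})
\end{equation*}
into an \emph{inner} layer that processes the last $n$ many $\mathsf{A}$-edges together with $(r,s)$, and an \emph{outer} layer that processes the leftmost $\mathsf{A}$-edge together with the intermediate $(r_1,s_1)\in E_\mathsf{R}\times E_\mathsf{S}$ produced by the inner layer. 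Tracking an element $(a, e_1\cdots e_n, r, s)$ through this splitting, the inner layer produces precisely the intermediate data $(r_1, f_1\cdots f_n) = \varphi_\mathsf{R}^{(n)}(e_1\cdots e_n, r)$ and $(s_1, e_1'\cdots e_n') = \varphi_\mathsf{S}^{(n)}(f_1\cdots f_n, s)$ that already appeared inside $T_n$; so the inductive hypothesis identifies $\psi_\mathsf{A}(r_1, s_1)$ concatenated with $e_1'\cdots e_n'$ with $e_1\cdots e_n$ concatenated with $\psi_\mathsf{A}(r,s)$. One application of the aligned identity to the remaining triple $(a, r_1, s_1)$ then resolves the outer layer as $(a, \psi_\mathsf{A}(r_1, s_1))$, and concatenating this with $e_1'\cdots e_n'$ yields $(a, e_1\cdots e_n, \psi_\mathsf{A}(r,s)) = (\id_{\mathsf{A}^{n+1}}\times\psi_\mathsf{A})(a, e_1\cdots e_n, r, s)$, closing the induction.

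The main obstacle is really just the bookkeeping in the fibered product: the intermediate edges $r_1, f_1\cdots f_n, s_1, e_1'\cdots e_n', r_2, s_2, e_1''\cdots e_m''$ must be threaded through several path isomorphisms in a consistent order, and one needs to keep the implicit associativity identifications of $E_\mathsf{A}^{n+1}\times\cdots$ straight at each step. Once the decomposition above is arranged, however, the inductive step reduces transparently to a single application of the aligned identity layered on top of the inductive hypothesis, and the $\mathsf{B}$-sided argument is identical after the symmetric relabelling; together these give that $(\mathsf{R},\mathsf{S},\varphi_\mathsf{R},\varphi_\mathsf{S},\psi_\mathsf{A},\psi_\mathsf{B})$ is balanced.
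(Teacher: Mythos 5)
Your proof is correct and follows essentially the same route as the paper: an induction on the number of $\mathsf{A}$-edges, whose base case is the aligned hypothesis and whose inductive step combines one application of the single-step aligned identity with the inductive hypothesis via the recursive decomposition of $\varphi_\mathsf{R}^{(n+1)}$ and $\varphi_\mathsf{S}^{(n+1)}$. The only (cosmetic) difference is that you peel off the leftmost $\mathsf{A}$-edge while the paper peels off the rightmost one adjacent to $r$; both decompositions of $\varphi^{(n+1)}$ are valid and the bookkeeping works out either way.
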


\begin{proof}
We will prove
\begin{equation}\label{eq_align_implies_balanced}
 (\id_\mathsf{R} \times \varphi_\mathsf{S}^{(\ell)}) (\varphi_\mathsf{R}^{(\ell)} \times\id_\mathsf{S})=(\psi_\mathsf{A}^{-1} \times \id_{\mathsf{A}^\ell})(\id_{\mathsf{A}^\ell} \times \psi_\mathsf{A}),
\end{equation}
for all $1 \leq \ell \leq m$.  Since the concrete shift equivalence is aligned, \eqref{eq_align_implies_balanced} holds for $\ell=1$.  

Assume \eqref{eq_align_implies_balanced} holds for some $\ell$.  We will show \eqref{eq_align_implies_balanced} holds for $\ell+1$.  By the definition of $\varphi_\mathsf{R}^{(\ell+1)}$ and $\varphi_\mathsf{S}^{(\ell+1)}$, we have 
\begin{align*}
\varphi_\mathsf{R}^{(\ell+1)} &= (\varphi_\mathsf{R}^{ (\ell) } \times \id_\mathsf{B} ) ( \id_{\mathsf{A}^\ell} \times \varphi_\mathsf{R})  & & \text{and} &  \varphi_\mathsf{S}^{(\ell+1)} &= (\varphi_\mathsf{S}^{ (\ell) } \times \id_\mathsf{A} ) ( \id_{\mathsf{B}^\ell} \times \varphi_\mathsf{S}).
\end{align*}
Let $a_1\ldots a_{\ell+1} rs \in E_{\mathsf{A}^{\ell+1} } \times E_\mathsf{R} \times E_\mathsf{S}$.  Choose $r_{\ell+1}\in E_{\mathsf{R}}$, $b_{\ell+1} \in E_\mathsf{B}$, $a_{\ell+1} ' \in E_\mathsf{A}$,  and $ s_{\ell+1} \in E_\mathsf{S}$ such that 
\begin{align*}
\varphi_\mathsf{R}( a_{\ell+1} r) &= r_{\ell+1} b_{\ell+1}  & \varphi_\mathsf{S} ( b_{\ell+1} s ) &= s_{\ell+1} a_{\ell+1}' \\
\end{align*}
Then 
\begin{align*}
&(\id_\mathsf{R} \times \varphi_\mathsf{S}^{(\ell+1)})(\varphi_\mathsf{R}^{(\ell+1)} \times\id_\mathsf{S}) (a_1\ldots a_{\ell+1} rs )\\
&= (\id_\mathsf{R} \times \varphi_\mathsf{S}^{ (\ell) } \times \id_\mathsf{A} )  (\id_\mathsf{R} \times \id_{\mathsf{B}^\ell } \times \varphi_\mathsf{S} )(\varphi_\mathsf{R}^{ (\ell) } \times \id_\mathsf{B}  \times\id_\mathsf{S}) ( \id_{\mathsf{A}^\ell} \times \varphi_\mathsf{R} \times\id_\mathsf{S}) (a_1\ldots a_{\ell+1} rs ) \\
&= (\id_\mathsf{R} \times \varphi_\mathsf{S}^{ (\ell) } \times \id_\mathsf{A} )  (\id_\mathsf{R} \times \id_{\mathsf{B}^\ell } \times \varphi_\mathsf{S} )(\varphi_\mathsf{R}^{ (\ell) } \times \id_\mathsf{B}  \times\id_\mathsf{S}) (a_1 \cdots a_\ell r_{\ell+1} b_{\ell+1}s) \\
& = (\id_\mathsf{R} \times \varphi_\mathsf{S}^{ (\ell) } \times \id_\mathsf{A} )  ( \varphi_\mathsf{R}^{(\ell)} \times \varphi_\mathsf{S} )  ( a_1 \cdots a_\ell r_{\ell+1} b_{\ell+1}s ) \\
&= (\id_\mathsf{R} \times \varphi_\mathsf{S}^{ (\ell) } \times \id_\mathsf{A} ) (\varphi_\mathsf{R}^{(\ell)} (a_1 \cdots a_\ell r_{\ell+1}) s_{\ell+1} a_{\ell+1}')\\
&= (\id_\mathsf{R} \times \varphi_\mathsf{S}^{ (\ell) } \times \id_\mathsf{A} ) ( \varphi_\mathsf{R}^{(\ell)} \times \id_\mathsf{S} \times \id_\mathsf{A} ) (a_1 \cdots a_\ell r_{\ell+1} s_{\ell+1} a_{\ell+1}' ) \\
&= (\psi_\mathsf{A}^{-1} \times \id_{\mathsf{A}^{\ell}} \times \id_{\mathsf{A}})(\id_{\mathsf{A}^{\ell}} \times \psi_\mathsf{A} \times \id_\mathsf{A})(a_1 \cdots a_\ell r_{\ell+1} s_{\ell+1} a_{\ell+1}') \\
&= (\psi_\mathsf{A}^{-1} \times \id_{\mathsf{A}^{\ell+1}})(\id_{\mathsf{A}^{\ell}} \times \psi_\mathsf{A} \times \id_\mathsf{A}) ( \id_{\mathsf{A}^\ell} \times \id_\mathsf{R} \times \varphi_\mathsf{S}) (\id_{\mathsf{A}^\ell} \times \varphi_\mathsf{R} \times \id_\mathsf{S} ) (a_1\ldots a_{\ell+1} rs )\\
&=  (\psi_\mathsf{A}^{-1} \times \id_{\mathsf{A}^{\ell+1}})(\id_{\mathsf{A}^{\ell}} \times \psi_\mathsf{A} \times \id_\mathsf{A}) ( \id_{\mathsf{A}^\ell} \times \psi_\mathsf{A}^{-1} \times \id_\mathsf{A}) (\id_{\mathsf{A}^\ell} \times \id_\mathsf{A} \times \psi_\mathsf{A} ) (a_1\ldots a_{\ell+1} rs )\\ 
&= (\psi_\mathsf{A}^{-1} \times \id_{\mathsf{A}^{\ell+1}})(\id_{\mathsf{A}^{\ell+1}} \times \psi_\mathsf{A})(a_1\ldots a_{\ell+1} rs ).
\end{align*}

It follows that \eqref{eq_align_implies_balanced} holds for all $1 \leq i \leq m$.  In particular, 
\begin{equation*}
 (\id_\mathsf{R} \times \varphi_\mathsf{S}^{(m)}) (\varphi_\mathsf{R}^{(m)} \times\id_\mathsf{S})(\psi_\mathsf{A}^{-1} \times \id_{\mathsf{A}^m}) = (\id_{\mathsf{A}^m} \times \psi_\mathsf{A})=\psi_\mathsf{A}^{-1} \times \psi_\mathsf{A}.
\end{equation*}
A symmetric argument gives
\begin{equation*}
(\id_\mathsf{S} \times \varphi_\mathsf{R}^{(m)})(\varphi_\mathsf{S}^{(m)}\times\id_\mathsf{R})=\psi_\mathsf{B}^{-1} \times \psi_\mathsf{B}.\qedhere
\end{equation*}
\end{proof}

\begin{thm}\label{thm:balanced->compatible}
Let $\mathsf{A}$ and $\mathsf{B}$ be essential matrices over $V$ and $W$ respectively, with entries in $\bbN$ and let $(\mathsf{R}, \mathsf{S}, \varphi_\mathsf{R}, \varphi_\mathsf{S}, \psi_\mathsf{A}, \psi_\mathsf{B})$ be a concrete shift between $\mathsf{A}$ and $\mathsf{B}$ with $\lag$ $m$. Then the following are equivalent:
\begin{enumerate}
\item $(\mathsf{R}, \mathsf{S}, \varphi_\mathsf{R}, \varphi_\mathsf{S}, \psi_\mathsf{A}, \psi_\mathsf{B})$ is aligned.
\item $(\mathsf{R}, \mathsf{S}, \varphi_\mathsf{R}, \varphi_\mathsf{S}, \psi_\mathsf{A}, \psi_\mathsf{B})$ is balanced.
\item $(\mathsf{R}, \mathsf{S}, \varphi_\mathsf{R}, \varphi_\mathsf{S}, \psi_\mathsf{A}, \psi_\mathsf{B})$ is compatible. 
\end{enumerate}
\end{thm}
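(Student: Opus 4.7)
The plan is to close a three-term cycle at the level of the given concrete shift. From \cite[Remark~4.3]{CDE} we already have $(3) \Rightarrow (1)$ and $(3) \Rightarrow (2)$, and Proposition~\ref{p:aligned->balanced} supplies $(1) \Rightarrow (2)$, so only $(2) \Rightarrow (3)$ remains. Before attacking that implication, I would record one enabling observation: the existence of the path isomorphisms $\psi_\mathsf{A}$ and $\psi_\mathsf{B}$ forces the identities $\mathsf{A}^m = \mathsf{R}\mathsf{S}$ and $\mathsf{S}\mathsf{R} = \mathsf{B}^m$, and since $\mathsf{A}$ and $\mathsf{B}$ are essential so are $\mathsf{A}^m$ and $\mathsf{B}^m$, immediately forcing $\mathsf{R}$ and $\mathsf{S}$ to have no zero rows or columns. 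This is what allows me to produce, at each step below, auxiliary edges with any prescribed source or range.

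To prove $(2)\Rightarrow(3)$ I would fix arbitrary $(a,r) \in E_{\mathsf{A}^m} \times E_\mathsf{R}$ and set $(\rho,\sigma) := \psi_\mathsf{A}^{-1}(a)$ and $(r',b') := \varphi_\mathsf{R}^{(m)}(a,r)$; the task is to show $(r',b') = (\rho,\psi_\mathsf{B}(\sigma,r))$, which is the first of the two compatibility identities applied to $(a,r)$. Pick any $s \in E_\mathsf{S}$ with $s(s)=r(r)$ and unwind the balanced equation for $\mathsf{A}$ on $(a,r,s)$; reading off the three coordinates of the resulting equality in $E_\mathsf{R}\times E_\mathsf{S}\times E_{\mathsf{A}^m}$ gives
\[
r' = \rho \qquad\text{and}\qquad \varphi_\mathsf{S}^{(m)}(b',s) = (\sigma,\psi_\mathsf{A}(r,s)).
\]
To pin down $b'$, choose $r_1 \in E_\mathsf{R}$ with $s(r_1) = r(s)$ and apply the balanced equation for $\mathsf{B}$ to $(b',s,r_1)$. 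Using the display above, this reads off $\psi_\mathsf{B}^{-1}(b') = (\sigma, r_*)$, where $r_*$ is the first coordinate of $\varphi_\mathsf{R}^{(m)}(\psi_\mathsf{A}(r,s),r_1)$. One final application of the balanced equation for $\mathsf{A}$, now on $(\psi_\mathsf{A}(r,s),r_1,t)$ for any $t \in E_\mathsf{S}$ with $s(t) = r(r_1)$, identifies $r_* = \pi_1\psi_\mathsf{A}^{-1}(\psi_\mathsf{A}(r,s)) = r$. Therefore $b' = \psi_\mathsf{B}(\sigma,r)$, and the first compatibility equation is verified. The second, for $\varphi_\mathsf{S}^{(m)}$, follows by the symmetric argument obtained by exchanging the roles of $\mathsf{A}\leftrightarrow\mathsf{B}$ and $\mathsf{R}\leftrightarrow\mathsf{S}$.

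The main obstacle is really just bookkeeping. At every invocation of a balanced equation one must verify that the triple fed in satisfies the appropriate source/range constraints, and that the auxiliary edges $s,r_1,t$ actually exist at the prescribed endpoints; this is exactly where essentiality of $\mathsf{R}$ and $\mathsf{S}$ -- itself forced by essentiality of $\mathsf{A}$ and $\mathsf{B}$ -- is indispensable. Beyond that, the proof is a short diagram chase through the two balanced equations.
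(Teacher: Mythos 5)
Your proposal is correct and follows essentially the same route as the paper: the cycle is closed via the cited $(3)\Rightarrow(1)$, Proposition~\ref{p:aligned->balanced} for $(1)\Rightarrow(2)$, and the same three-stage chase for $(2)\Rightarrow(3)$ --- first the balanced identity for $\mathsf{A}$ to get $r'=\rho$ and to compute $\varphi_\mathsf{S}^{(m)}(b',s)$, then the balanced identity for $\mathsf{B}$ to express $\psi_\mathsf{B}^{-1}(b')=(\sigma,r_*)$, and finally the balanced identity for $\mathsf{A}$ once more to force $r_*=r$. Your auxiliary edges $s, r_1, t$ and the appeal to nonvanishing rows of $\mathsf{R}$ and $\mathsf{S}$ (forced by essentiality of $\mathsf{A}^m$ and $\mathsf{B}^m$) match the paper's $s, r'', s''$ exactly, so this is the same argument up to relabeling.
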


\begin{proof}
\cite[Lemma~4.2]{CDE} proves that if $(\mathsf{R}, \mathsf{S}, \varphi_\mathsf{R}, \varphi_\mathsf{S}, \psi_\mathsf{A}, \psi_\mathsf{B})$ is compatible, then it is aligned, and Proposition \ref{p:aligned->balanced} shows that if the tuple $(\mathsf{R}, \mathsf{S}, \varphi_\mathsf{R}, \varphi_\mathsf{S}, \psi_\mathsf{A}, \psi_\mathsf{B})$ is aligned, then it is balanced.

Assume now that $(\mathsf{R}, \mathsf{S}, \varphi_\mathsf{R}, \varphi_\mathsf{S}, \psi_\mathsf{A}, \psi_\mathsf{B})$ is balanced. That is,
\begin{align*}
\psi_\mathsf{A}^{-1} \times \psi_\mathsf{A} &= (\id_\mathsf{R} \times \varphi_\mathsf{S}^{(m)}) ( \varphi_\mathsf{R}^{(m)} \times \id_\mathsf{S}) \quad \text{and} \\
\psi_\mathsf{B}^{-1} \times \psi_\mathsf{B} &= (\id_\mathsf{S} \times \varphi_\mathsf{R}^{(m)}) ( \varphi_\mathsf{S}^{(m)} \times \id_\mathsf{R}).
\end{align*}
We will show that $(\mathsf{R}, \mathsf{S}, \varphi_\mathsf{R}, \varphi_\mathsf{S}, \psi_\mathsf{A}, \psi_\mathsf{B})$ is compatible, that is, we must show
$$
\varphi_\mathsf{R}^{(m)} = (\id_\mathsf{R} \times \psi_\mathsf{B})(\psi_\mathsf{A}^{-1} \times \id_\mathsf{R} ) \quad \text{and} \quad \varphi_\mathsf{S}^{(m)} = (\id_\mathsf{S} \times \psi_\mathsf{A})(\psi_\mathsf{B}^{-1} \times \id_\mathsf{S} ).
$$
We will only show $\varphi_\mathsf{R}^{(m)} = (\id_\mathsf{R} \times \psi_\mathsf{B})(\psi_\mathsf{A}^{-1} \times \id_\mathsf{R} )$ as a symmetric argument shows $\varphi_\mathsf{S}^{(m)} = (\id_\mathsf{S} \times \psi_\mathsf{A})(\psi_\mathsf{B}^{-1} \times \id_\mathsf{S} )$.

First note that since $\mathsf{A}$ and $\mathsf{B}$ are essential matrices and since $\mathsf{R}\mathsf{S}=\mathsf{A}^m$ and $\mathsf{S}\mathsf{R}=\mathsf{B}^m$, each row of $\mathsf{R}$ and each row of $\mathsf{S}$ are nonzero.   We will use this observation throughout the proof.  Let $e_1\cdots e_m r \in E_\mathsf{A}^m \times E_\mathsf{R}$. Let $r' \in E_\mathsf{R}$ and $f_1\cdots f_m \in E_\mathsf{B}^m$ be such that 
\begin{equation}\label{eq:cse1}
    \varphi_\mathsf{R}^{(m)} ( e_1 \cdots e_m r) = r' f_1 \cdots f_m.
\end{equation}
We must show that 
\begin{equation*}
    (\id_\mathsf{R} \times \psi_\mathsf{B}) (\psi_\mathsf{A}^{-1} \times \id_\mathsf{R}) (e_1\cdots e_m r)=r'f_1\cdots f_m.
\end{equation*}
Since each row of $\mathsf{S}$ is nonzero, there exists $s \in E_\mathsf{S}$ such that $e_1 \cdots e_m rs \in E_\mathsf{A}^m \times E_\mathsf{R} \times E_\mathsf{S}$.    Choose $s' \in E_\mathsf{S}$ and $e_1' \cdots e_m' \in E_\mathsf{A}^m$ such that 
\begin{equation}\label{eq:cse2}
    \varphi_\mathsf{S}^{(m)} ( f_1 \cdots f_m s )= s' e_1' \cdots e_m'.
\end{equation}
 Since $\psi_\mathsf{A}^{-1} \times \psi_\mathsf{A}  =(\id_\mathsf{R} \times \varphi_\mathsf{S}^{(m)}) ( \varphi_\mathsf{R}^{(m)} \times \id_\mathsf{S})$, we have 
\begin{align*}
    \psi_\mathsf{A}^{-1}( e_1 \cdots e_m ) \psi_\mathsf{A} (rs) &\overset{\phantom{\eqref{eq:cse1}}}{=} (\id_\mathsf{R} \times \varphi_\mathsf{S}^{(m)})(\varphi_\mathsf{R}^{(m)} \times \id_\mathsf{S}) (e_1\cdots e_m rs) \\
            &\overset{\eqref{eq:cse1}}{=} (\id_\mathsf{R} \times \varphi_\mathsf{S}^{(m)}) ( r'f_1\cdots f_m s)  \\
            &\overset{\eqref{eq:cse2}}{=}r' s' e_1' \cdots e_m'.
\end{align*}
Therefore, 
\begin{align}
    \psi_\mathsf{A}^{-1}( e_1\cdots e_m) &= r's' \label{eq:cse3} \quad \text{and}\\
    \psi_\mathsf{A}(rs)&= e_1'\cdots e_m' \label{eq:cse4}.
\end{align}

We claim that 
\begin{equation}\label{eq:cse5}
    \psi_\mathsf{B}(s'r) = f_1 \cdots f_m.
\end{equation}
Since each row of $\mathsf{R}$ is nonzero, there exists $r'' \in E_\mathsf{R}$ such that $f_1 \cdots f_m s r'' \in E_\mathsf{B}^m \times E_\mathsf{S} \times E_\mathsf{R}$. Let $r''' \in E_\mathsf{R}$ and $f_1'\cdots f_m' \in E_\mathsf{B}^m$ be such that 
\begin{equation}\label{eq:cse6}
\varphi_\mathsf{R}^{(m)}( e_1'\cdots e_m' r'' ) = r''' f_1'\cdots f_m'.
\end{equation}
Since  $\psi_\mathsf{B}^{-1} \times \psi_\mathsf{B} = (\id_\mathsf{S} \times \varphi_\mathsf{R}^{(m)}) ( \varphi_\mathsf{S}^{(m)} \times \id_\mathsf{R})$, 
\begin{align*}
\psi_\mathsf{B}^{-1}( f_1 \cdots f_m) \psi_\mathsf{B}(sr'') &\overset{\phantom{\eqref{eq:cse2}}}{=} (\id_\mathsf{S} \times \varphi_\mathsf{R}^{(m)}) ( \varphi_\mathsf{S}^{(m)} \times \id_\mathsf{R})(f_1\cdots f_m sr'') \\
&\overset{\eqref{eq:cse2}}{=} (\id_\mathsf{S} \times \varphi_\mathsf{R}^{(m)})(s' e_1'\cdots e_m' r'') \\
&\overset{\eqref{eq:cse6}}{=}  s' r''' f_1' \cdots f_m'.
\end{align*}
Therefore, $\psi_\mathsf{B}^{-1}(f_1 \cdots f_m)=s' r'''$.  To finish the proof of the claim that $\psi_\mathsf{B}(s'r)=f_1 \cdots f_m$, we will show $r=r'''$.  Since each row of $\mathsf{S}$ is nonzero, there exists $s'' \in E_\mathsf{S}$ such that $e_1'\cdots e_m' r'' s'' \in E_\mathsf{A}^m \times E_\mathsf{R} \times E_\mathsf{S}$.  Since $\psi_\mathsf{A}^{-1} \times \psi_\mathsf{A} = (\id_\mathsf{R} \times \varphi_\mathsf{S}^{(m)}) ( \varphi_\mathsf{R}^{(m)} \times \id_\mathsf{S})$,
\begin{align*}
rs \psi_\mathsf{A}(r''s'') &\overset{\eqref{eq:cse4}}{=} \psi_\mathsf{A}^{-1}( e_1'\cdots e_m')\psi_\mathsf{A} (r''s'')  \\
                &\overset{\phantom{\eqref{eq:cse6}}}{=}(\id_\mathsf{R} \times \varphi_\mathsf{S}^{(m)} )( \varphi_\mathsf{R}^{(m)} \times \id_\mathsf{S})(e_1'\cdots e_m' r'' s'') \\
                &\overset{\eqref{eq:cse6}}{=}(\id_\mathsf{R} \times \varphi_\mathsf{S}^{(m)} ) (r''' f_1'\cdots f_m' s'')\\
                &=r''' \varphi_\mathsf{R}^{(m)}(f_1'\cdots f_m' s'').
\end{align*}
Therefore, $r=r'''$ which implies $\psi_\mathsf{B}^{-1}(f_1\cdots f_m)=s'r''' = s'r$.  Therefore, $\psi_\mathsf{B}(s'r)=f_1\cdots f_m$ and completing the proof of the claim. 

We now finish the proof that 
$$
\varphi_\mathsf{R}^{(m)}( e_1 \cdots e_m r)=(\id_\mathsf{R} \times \psi_\mathsf{B})(\psi_\mathsf{A}^{-1} \times \id_\mathsf{R})( e_1\cdots e_m r).
$$
Indeed,
\begin{align*}
    \varphi_\mathsf{R}^{(m)} ( e_1 \cdots e_m r ) &\overset{\eqref{eq:cse1}}{=} r' f_1 \cdots f_m \\
                &\overset{\eqref{eq:cse5}}{=} r' \psi_\mathsf{B}(s'r)  \\
                &= (\id_\mathsf{R} \times \psi_\mathsf{B})(r's'r) \\
                &\overset{\eqref{eq:cse3}}{=}(\id_\mathsf{R} \times \psi_\mathsf{B})(\psi_\mathsf{A}^{-1}(e_1\cdots e_m)r) \\
    &=(\id_\mathsf{R} \times \psi_\mathsf{B})(\psi_\mathsf{A}^{-1} \times \id_\mathsf{R}) (e_1\cdots e_m r).
\end{align*}
Consequently, $\varphi_\mathsf{R}^{(m)} =(\id_\mathsf{R} \times \psi_\mathsf{B})(\psi_\mathsf{A} \times \id_\mathsf{R})$.
\end{proof}

As a corollary, we get that aligned and balanced shift relations are actually equivalence relations. This follows readily from \cite[Proposition 4.4]{CDE} and Theorem \ref{thm:balanced->compatible}.

\begin{cor}
Let $\mathsf{A}$ and $\mathsf{B}$ be essential matrices indexed by $V$ and $W$ respectively, with entries in $\bbN$. Then aligned and balanced shift relations are equivalence relations.
\end{cor}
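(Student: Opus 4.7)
The plan is to deduce this as an immediate consequence of Theorem~\ref{thm:balanced->compatible} combined with \cite[Proposition~4.4]{CDE}. The key observation is that Theorem~\ref{thm:balanced->compatible} is stated at the level of an individual concrete shift $(\mathsf{R}, \mathsf{S}, \varphi_\mathsf{R}, \varphi_\mathsf{S}, \psi_\mathsf{A}, \psi_\mathsf{B})$: for such a tuple, being aligned, balanced, or compatible are equivalent conditions. Consequently, at the level of the induced relations on essential matrices with entries in $\bbN$, one has the chain of equivalences
\begin{align*}
\mathsf{A}\text{ aligned shift related to }\mathsf{B}\text{ with $\lag$ } m \;&\Longleftrightarrow\; \exists\text{ aligned concrete shift with $\lag$ }m \\
&\Longleftrightarrow\; \exists\text{ compatible concrete shift with $\lag$ }m \\
&\Longleftrightarrow\; \mathsf{A}\text{ compatible shift related to }\mathsf{B}\text{ with $\lag$ }m,
\end{align*}
and identically with ``balanced'' in place of ``aligned''.

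Thus aligned, balanced and compatible shift relations of $\lag$ $m$ on essential matrices with entries in $\bbN$ are literally the same binary relation. Since \cite[Proposition~4.4]{CDE} asserts that compatible shift relation is an equivalence relation on essential matrices with entries in $\bbN$, the same must be true of aligned and balanced shift relations. There is no real obstacle here: the work was already done in Theorem~\ref{thm:balanced->compatible} (in particular, in the passage from aligned to balanced via Proposition~\ref{p:aligned->balanced}, and in the passage from balanced to compatible via Theorem~\ref{thm:balanced->compatible} itself), so the proof of the corollary amounts to packaging these identifications and quoting the transitivity, reflexivity, and symmetry already verified for compatible shift equivalence in \cite{CDE}.
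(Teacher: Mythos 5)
Your argument is correct and is exactly the paper's route: Theorem~\ref{thm:balanced->compatible} shows the three conditions coincide for each concrete shift, hence the induced relations on essential matrices with entries in $\bbN$ are literally equal, and the equivalence-relation property then transfers from compatible shift equivalence via \cite[Proposition~4.4]{CDE}. Nothing further is needed.
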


By \cite[Theorem~1.3]{CDE}, for essential matrices $\mathsf{A}$ and $\mathsf{B}$ with entries in $\bbN$, the matrices $\mathsf{A}$ and $\mathsf{B}$ are strong shift equivalent if and only if $\mathsf{A}$ and $\mathsf{B}$ are compatible shift equivalent.  Using this result and Theorem~\ref{thm:balanced->compatible}, we get the following corollary.

\begin{cor}\label{cor-matrices-equivalent-relations}
Let $\mathsf{A}$ and $\mathsf{B}$ be essential matrices with entries in $\bbN$, and let $m$ a positive integer. Then the following are equivalent.
\begin{enumerate}

\item $\mathsf{A}$ and $\mathsf{B}$ are balanced shift shift equivalent  with lag $m$;

\item $\mathsf{A}$ and $\mathsf{B}$ are aligned shift shift equivalent  with lag $m$; and

\item $\mathsf{A}$ and $\mathsf{B}$ are compatible shift equivalent with lag $m$.
\end{enumerate}
In particular, aligned and balanced shift equivalence relations coincide with SSE.
\end{cor}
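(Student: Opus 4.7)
The plan is to observe that Corollary~\ref{cor-matrices-equivalent-relations} is essentially a direct packaging of Theorem~\ref{thm:balanced->compatible} together with the already-established result \cite[Theorem~1.3]{CDE}. The key point is that Theorem~\ref{thm:balanced->compatible} works at the level of an individual concrete shift $(\mathsf{R}, \mathsf{S}, \varphi_\mathsf{R}, \varphi_\mathsf{S}, \psi_\mathsf{A}, \psi_\mathsf{B})$, showing that aligned/balanced/compatible are three labels for the same tuples. To deduce a statement about the existence of such tuples (i.e.\ about the underlying relations on matrices), essentially no further work is required.

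First, I would prove the equivalence of (1), (2), (3). Suppose $\mathsf{A}$ and $\mathsf{B}$ are aligned shift equivalent with lag $m$. By definition this means there exists a concrete shift $(\mathsf{R}, \mathsf{S}, \varphi_\mathsf{R}, \varphi_\mathsf{S}, \psi_\mathsf{A}, \psi_\mathsf{B})$ of lag $m$ that is aligned. By Theorem~\ref{thm:balanced->compatible}, that same concrete shift is then balanced and compatible, which shows (2) and (3) via the same witnessing data $\mathsf{R}, \mathsf{S}$. The three remaining implications are entirely symmetric and follow by the same application of Theorem~\ref{thm:balanced->compatible} to the relevant concrete shift. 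This cyclically closes the equivalence.

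For the final assertion that aligned and balanced shift equivalence coincide with SSE, I would invoke \cite[Theorem~1.3]{CDE}, which states that for essential matrices over $\bbN$, compatible shift equivalence coincides with SSE (where one side of this equivalence does not fix the lag, passing through the transitive closure). Combining with the equivalence (1)$\Leftrightarrow$(2)$\Leftrightarrow$(3) already established, aligned shift equivalence and balanced shift equivalence both also coincide with SSE, as claimed.

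There is no real obstacle here: the work has been done in Proposition~\ref{p:aligned->balanced}, Theorem~\ref{thm:balanced->compatible}, and \cite[Theorem~1.3]{CDE}. The only mild subtlety is keeping track that ``aligned / balanced / compatible with lag $m$'' is a property of a witnessing tuple, so the equivalence of the three matrix-level relations of fixed lag $m$ follows from the pointwise equivalence for tuples without having to adjust $m$; whereas the identification with SSE (the ``in particular'' clause) only holds after passing to the transitive closure, as already encoded in the statement of \cite[Theorem~1.3]{CDE}.
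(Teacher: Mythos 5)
Your proposal is correct and matches the paper's argument exactly: the paper likewise deduces the equivalence of (1)--(3) by applying Theorem~\ref{thm:balanced->compatible} to the witnessing concrete shift tuple, and then identifies all three relations with SSE via \cite[Theorem~1.3]{CDE}. Your remark that the fixed-lag equivalence is pointwise in the witnessing tuple, while the identification with SSE passes through the equivalence-relation statement of \cite[Theorem~1.3]{CDE}, is the right bookkeeping.
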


\begin{rmk}
It is straightforward to show that SSE with lag $m$ implies aligned shift equivalence with lag $m$ (and therefore balanced or compatible shift equivalence with lag $m$). Although the converse is currently unknown, particularly since the proof of \cite[Theorem~1.3]{CDE} goes through C*-algebra (or through SFTs), it is likely that a bound on the lag of SSE can be obtained in terms of a lag of a concrete aligned, balanced or compatible shift equivalence.
\end{rmk}

\section{Strong shift equivalence for C*-correspondences}\label{sec-SSE-cor}

In this section, we study strong shift equivalence between C*-correspondences, which was introduced in a paper of Muhly, Pask and Tomforde \cite{MPT08}. The main result of this section is that when two regular (or both regular and full) C*-correspondences are strong shift equivalent, we may choose the C*-correspondences implementing a strong shift equivalence to also be regular (or both regular and full, respectively) C*-correspondences. This result will be important in Section~\ref{sec-modSSE-implies-SSE} where we prove that for finite essential matrices, strong shift equivalence of the matrices is equivalent to strong shift equivalent of their associated C*-correspondences.

\begin{dfn}\label{dfn-SSE-corr}
Let $X$ be an $\A$-correspondence and $Y$ be a $\B$-correspondence.  We say that \emph{$X$ is elementary strong shift related to $Y$} provided that there exist an $\A-\B$-correspondence $R$ and a $\B-\A$-correspondence $S$ such that 
$$
X \cong R \otimes_\B S \quad \text{and} \quad S \otimes_\A R \cong Y.
$$
We say that \emph{$X$ is strong shift equivalent to $Y$ with lag $m$} if there are correspondences $X_0, X_1, X_2, \ldots, X_m$ such that $X \cong X_0$, $X_m \cong Y$, and $X_i$ is elementary strong shift related to $X_{i+1}$ for all $i$, or equivalently, there exist $\C_i-\C_{i+1}$-correspondences $R_i$ and $\C_{i+1}-\C_i$-correspondences $S_i$ for $i=0,\dots, m-1$ such that $\C_0=\A$, $\C_m = \B$, $X \cong R_0 \otimes_{\C_1} S_0$, $S_m \otimes_{\C_{m-1}} R_m \cong Y$, and $R_i \otimes_{\C_{i+1}} S_i \cong S_{i+1} \otimes_{\C_{i+1}} R_{i+1}$.
\end{dfn}

\begin{rmk}
Let $X$ be a non-degenerate C*-correspondence over $\A$ and $Y$ a non-degenerate C*-correspondence over $\B$.  Suppose $X$ and $Y$ are SSE via possibly degenerate C*-correspon\-dences $R_1,..., R_m$ and $S_1, \ldots, S_m$, so that $S_i \otimes R_i \cong R_{i+1} \otimes S_{i+1}$ for $i=1,...,m-1$ and $X\cong R_1 \otimes S_1$, $Y \cong S_m \otimes R_m$.  By \cite[Lemma~3.10]{MPT08}, we may replace the $R_1,..., R_m$ and $S_1, \ldots, S_m$ with \emph{non-degenerate} correspondences $R_1',..., R_m'$ and $S_1', \ldots, S_m'$ implementing a SSE between $X$ and $Y$. Since $X$ and $Y$ are non-degenerate, we get that $X \cong R_1' \otimes S_1'$ and $Y\cong S_m' \otimes R_m'$, where the intermediary correspondences $S_i'\otimes R_i' \cong R_{i+1}' \otimes S_{i+1}'$ are now non-degenerate for $i=1,...,m-1$. Note that because of \cite[Lemma~3.10]{MPT08}, if any of the involved C*-correspondences above were \textit{a priori} assumed to be either injective, proper or full, this procedure will preserve their injectivity, properness or fullness, respectively.
\end{rmk}

In what follows, for more on quotients of C*-modules and C*-correspondences, we refer the reader to \cite{FMR-IUMJ2003, KatII}. Let $\I \unlhd \A$ be an ideal in a C*-algebra $\A$, and $X$ a right Hilbert C*-module over $\A$. By \cite[Corollary 1.4]{KatII} we get that $X\I$ is a closed right $\A$-linear subspace of $X$ which is invariant under the left action of $\L(X)$. Hence, we may consider $\K(X\I)$ as a subalgebra of $\K(X)$ which is the closed linear span of elements $\theta_{\xi,\eta}$ for $\xi,\eta \in X\I$.

We will denote by $X_\I$ the quotient Banach space $X/X\I$, which comes equipped with quotient maps $\A \rightarrow \A / \I$ and $X \rightarrow X_\I$ which are both denoted by $[\cdot]_{\I}$. The space $X_{\I}$ then carries a $\A/\I$ inner product and a right action of $\A/\I$ so that
$$
\langle[\xi]_{\I},[\eta]_{\I} \rangle = [\langle \xi,\eta \rangle]_{\I}, \ \ \text{and} \ \ [\xi]_{\I} \cdot [a]_{\I} = [\xi a]_{\I}
$$
for $\xi,\eta \in X$ and $a\in \A$. Together with this right $\A/ \I$ action and $\A/\I$-valued inner product, $X_{\I}$ turns into a Hilbert C*-module in its own right. Note that if $X$ is full then $X_{\I}$ is also full.

Since $X\I$ is invariant under the application of operators from $\L(X)$, we obtain a map $\L(X) \rightarrow \L(X_\I)$, also denoted by $[\cdot]_{\I}$ which satisfies $[S]_{\I}[\xi]_{\I} = [S\xi]_{\I}$ for $S\in \L(X)$ and $\xi \in X$. Then, by \cite[Lemma 1.6]{KatII} the restriction of $[\cdot]_{\I}$ to $\K(X)$ is surjective onto $\K(X_{\I})$ with kernel $\K(X\I)$ and is given by $[\theta_{\xi,\eta}]_{\I} = \theta_{[\xi]_{\I},[\eta]_{\I}}$.

Next, when we are given two ideals $\I,\I'$ such that $\I \subseteq \I'$, then $\I'/\I$ is an ideal of $\A / \I$, and the map $\big[ [a]_{\I} \big]_{\I'/\I} \rightarrow [a]_{\I'}$ gives a well-defined isomorphism between $(\A/\I)/ (\I'/\I)$ and $\A / \I'$, which we implicitly use to identify these C*-algebras henceforth. Similarly, we identify $(X_{\I})_{\I'/\I}$ with $X_{\I'}$. In all situations (at the level of the C*-algebras, Hilbert C*-modules and adjointable operators on Hilbert C*-modules) we have that the composition of $[\cdot]_{\I}$ with $[\cdot]_{\I'/\I}$ coincides with $[\cdot]_{\I'}$.

\begin{lem}[{\cite[Lemma~2.3]{FMR-IUMJ2003}}]\label{lem:quot-correspondence}
Let $X$ be right Hilbert $\B$-module and let $\phi \colon \A \to \L(X)$ be a $*$-homomorphism.  Suppose $\I \unlhd \A$ and $\J \unlhd \B$ such that 
$$\phi(\I)X \subseteq X\J.$$
Then $\phi_{\I, \J} \colon \A/\I \to \L(X_\J)$, given by $\phi_{\I,\J} ( [a]_{\I}) ([\xi]_{\J}) = [\phi(a)\xi]_{\J}$ for $a\in \A$ and $\xi \in X$, is a well-defined $*$-homomorphism. If moreover $\phi: \A \rightarrow \K(X)$, then $\phi_{\I,\J} : \A/\I \rightarrow \K(X_{\J})$.
\end{lem}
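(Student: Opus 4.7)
The plan is to proceed in three steps. First, for each $a \in \A$, I would define a candidate operator $T_a$ on $X_\J$ by $T_a[\xi]_\J = [\phi(a)\xi]_\J$ and verify that $T_a \in \L(X_\J)$. Second, I would check that the assignment $a \mapsto T_a$ is a $*$-homomorphism $\A \to \L(X_\J)$. Third, I would show that this map annihilates $\I$ and hence descends via the universal property of the quotient to the desired $\phi_{\I,\J} \colon \A/\I \to \L(X_\J)$.

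For the first step, the crucial fact recorded in the paragraph preceding the statement is that $X\J$ is a closed submodule invariant under the left action of every operator in $\L(X)$. In particular $\phi(a)X\J \subseteq X\J$, so the formula descends from $X$ to $X_\J = X/X\J$, giving a well defined bounded right $\B/\J$-linear map. Adjointability is then immediate from
$$
\langle T_a[\xi]_\J, [\eta]_\J \rangle = [\langle \phi(a)\xi, \eta\rangle]_\J = [\langle \xi, \phi(a^*)\eta\rangle]_\J = \langle [\xi]_\J, T_{a^*}[\eta]_\J\rangle,
$$
which identifies the adjoint as $T_a^* = T_{a^*}$ and in fact shows that $T_a$ coincides with the operator $[\phi(a)]_\J$ produced by the canonical quotient map $[\cdot]_\J\colon \L(X) \to \L(X_\J)$ recalled before the lemma.

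Multiplicativity, linearity, and $*$-preservation of $a \mapsto T_a$ follow at once from the corresponding properties of $\phi$ together with the display above, so $a \mapsto T_a$ is a $*$-homomorphism. The hypothesis $\phi(\I)X \subseteq X\J$ then forces $T_a = 0$ whenever $a \in \I$, so the universal property of quotient C*-algebras produces the claimed factorization $\phi_{\I,\J}$. For the final assertion, the identification $\phi_{\I,\J}([a]_\I) = [\phi(a)]_\J$ reduces the claim to the fact, likewise reviewed before the lemma via \cite[Lemma~1.6]{KatII}, that $[\cdot]_\J$ restricts to a map $\K(X) \to \K(X_\J)$. I do not foresee any real obstacle: the entire argument is a routine verification layered on top of the quotient machinery summarized just before the statement, with the two substantive ingredients being the $\L(X)$-invariance of $X\J$ (which makes each $T_a$ well defined) and the hypothesis $\phi(\I)X \subseteq X\J$ (which is used only at the last step to produce the descent to $\A/\I$).
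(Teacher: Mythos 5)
Your proposal is correct and follows essentially the same route as the paper: both arguments amount to composing $\phi$ with the canonical quotient map $[\cdot]_\J\colon \L(X)\to\L(X_\J)$ (you reconstruct this map explicitly as $a\mapsto T_a$ before identifying it), observing that the hypothesis $\phi(\I)X\subseteq X\J$ places $\I$ in the kernel so the composition factors through $\A/\I$, and invoking \cite[Lemma~1.6]{KatII} for the statement about compacts. The only difference is expository: the paper treats the well-definedness and adjointability of $[\phi(a)]_\J$ as already established in the preceding discussion, whereas you verify them directly.
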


\begin{proof}
  Let $a \in \A$. Since $\phi(\I) X \subseteq X\J$, if $a \in \I$, then $[\phi(a) \xi]_{\J} = 0$ for all $\xi\in X$.  Therefore, $\I$ is in the kernel of the composition $\A\xrightarrow{\phi}\L(X)\xrightarrow{[\cdot]_\J} \L(X_\J)$. Consequently, this composition factors through a well-defined $*$-homomorphism $\phi_{\I, \J} \colon \A/\I \to \L(X_\J)$.
	
Finally, by the discussion preceding the statement of the Lemma, we see that if $\phi : \A \rightarrow \K(X)$, then also $\phi_{\I,\J} : \A / \J \rightarrow \K(X_{\J})$.
\end{proof}

By Lemma \ref{lem:quot-correspondence} we may henceforth consider $X_{\J}$ as a $\A/ \I - \B / \J$ correspondence. Note that if $X$ is an $\A$-correspondence and $\I = \ker(\phi_X)$, then 
$$
\phi_X(\I) X = \{0\} \subseteq X\I.
$$
Hence, by Lemma~\ref{lem:quot-correspondence}, $X_\I$ is an $\A/\I$-correspondence.

\begin{lem}\label{lem:compatible-condition}
Let $X$ be an $\A$-correspondence and let $Y$ be a $\B$-correspondence. Let $\I = \ker( \phi_X) \unlhd \A$ and $\J = \ker( \phi_Y) \unlhd \B$. 
 Suppose there are an $\A-\B$-correspondence $R$ and a $\B-\A$-correspondence $S$ such that 
$$X \cong R \otimes_{\B} S \quad \text{and} \quad S \otimes_{\A} R \cong Y.$$
Then 
$$
\phi_R(\I)R \subseteq R\J \quad \text{and} \quad \phi_S(\J) S \subseteq S\I.
$$
\end{lem}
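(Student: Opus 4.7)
The plan is to establish $\phi_R(\I)R \subseteq R\J$ directly; the inclusion $\phi_S(\J)S \subseteq S\I$ will then follow by the symmetric argument swapping $(X, \A, R)$ with $(Y, \B, S)$. I will rely on two standard facts about Hilbert C*-modules. First, for any right Hilbert $\B$-module $R$ and ideal $\J \unlhd \B$, one has the characterization
\[R\J = \{\xi \in R : \langle \xi, \xi \rangle \in \J\},\]
where the nontrivial ``$\supseteq$'' inclusion follows by approximating $\xi$ by $\xi \cdot f_\varepsilon(\langle \xi, \xi \rangle)$, with $f_\varepsilon(t) = t/(t+\varepsilon)$, and noting that $f_\varepsilon(\langle \xi, \xi \rangle) \in \J$. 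Second, I would observe as a preliminary step that $\ker(\phi_S) \subseteq \J$: indeed, if $b \in \ker(\phi_S)$, then $b \cdot (\eta \otimes \xi) = \phi_S(b)\eta \otimes \xi = 0$ for every elementary tensor in $S \otimes_\A R \cong Y$, so $\phi_Y(b)$ vanishes on a dense subspace of $Y$, hence $b \in \ker(\phi_Y) = \J$.

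With these ingredients in hand, I would fix $a \in \I$ and $\xi \in R$. The isomorphism $X \cong R \otimes_\B S$ identifies $\phi_X(a)$ with the operator sending $\xi' \otimes \eta \mapsto \phi_R(a)\xi' \otimes \eta$; since $\phi_X(a) = 0$, this yields $\phi_R(a)\xi \otimes \eta = 0$ in $R \otimes_\B S$ for every $\eta \in S$. Computing the $\A$-valued inner product on $R \otimes_\B S$,
\[\bigl\|\phi_R(a)\xi \otimes \eta\bigr\|^2 = \bigl\|\langle \eta, \phi_S(c)\eta\rangle\bigr\|, \qquad c := \langle \phi_R(a)\xi, \phi_R(a)\xi\rangle \in \B,\]
so $\langle \eta, \phi_S(c)\eta \rangle = 0$ for every $\eta \in S$. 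Since $c \geq 0$, so is $\phi_S(c)$, and then $\langle \phi_S(c)^{1/2}\eta, \phi_S(c)^{1/2}\eta \rangle = 0$ for every $\eta$ forces $\phi_S(c) = 0$. Hence $c \in \ker(\phi_S) \subseteq \J$, and the characterization above yields $\phi_R(a)\xi \in R\J$, completing the proof of the first inclusion.

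There is no single hard step; the only part requiring genuine care is the characterization of $R\J$, which is a short functional-calculus approximation. Everything else is bookkeeping with the tensor-product inner product and the identification of left actions under $X \cong R \otimes_\B S$.
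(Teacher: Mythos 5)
Your proof is correct and follows essentially the same route as the paper: both arguments reduce to showing $\langle \phi_R(a)\xi,\phi_R(a)\xi\rangle \in \ker(\phi_S) \subseteq \J$ and then concluding $\phi_R(a)\xi \in R\J$. The only (cosmetic) difference is in that last step, where you invoke the characterization $R\J = \{\xi : \langle\xi,\xi\rangle\in\J\}$ via functional calculus while the paper runs the equivalent approximate-identity argument directly.
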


\begin{proof}
Since $X \cong R \otimes_{\B} S$ we have that $\ker(\phi_X) = \ker(\phi_{R \otimes S} )$, where $\phi_{R\otimes S}(a) = \phi_{R}(a) \otimes 1$.  Thus, $\ker(\phi_R) \subseteq  \ker(\phi_{R \otimes S} ) = \ker(\phi_X)  =\I$. Similarly, we have that $\ker(\phi_Y) = \ker( \phi_{S\otimes R} )$ and $\ker(\phi_S ) \subseteq \J$.  

We now show $\phi_R(\I)R \subseteq R\J$. Let $a \in \I$.  Then $a \in \ker( \phi_{R \otimes S} )$.  Therefore, for all $r, r' \in R$ and for all $s, s' \in S$ we have
$$
0 = \langle \phi_R(a) r \otimes s , r' \otimes s' \rangle_{\A} = \langle s , \phi_S ( \langle \phi_R(a)r, r' \rangle_B ) s' \rangle_{\A}.
$$
Thus, $\phi_S ( \langle \phi_R(a)r, r' \rangle_{\B} ) = 0$ for all $r, r' \in R$ which implies $\langle \phi_R(a)r, r' \rangle_{\B}$ is an element of $\ker(\phi_S) \subseteq \J$ for all $r, r' \in R$.  Let $\{ e_\lambda \}$ be an approximate identity of $\J$ consisting of positive elements. Fix $r \in R$.  Then
\begin{align*}
&\langle \phi_R(a) r e_\lambda - \phi_R(a)r ,\phi_R(a) r e_\lambda - \phi_R(a)r \rangle_{\B} \\
&= e_\lambda \langle \phi_R(a) r, \phi_R(a) r \rangle_B e_\lambda - e_\lambda \langle \phi_R(a) r, \phi_R(a) r \rangle_{\B} \\
&\qquad \qquad -  \langle \phi_R(a) r, \phi_R(a) r \rangle_B e_\lambda + \langle \phi_R(a) r, \phi_R(a) r \rangle_{\B} \\
&\to 0.
\end{align*}
Since $\phi_R(a) r e_\lambda \in R\J$, the above implies $\phi_R(a) r \in R\J$.  Thus, we have proved that $\phi_R(\I)R \subseteq R\J$, and a similar argument shows $\phi_S(\J) S \subseteq S \I$.
\end{proof}

\begin{prp} \label{p:one-step-quotient}
Let $X$ be an $\A$-correspondence and let $Y$ be a $\B$-correspondence.  Let $\I \unlhd \A$ and $\J \unlhd \B$.  Suppose there is an $\A-\B$-correspondence $R$ and a $\B-\A$-correspondence $S$ such that 
$$X \cong R \otimes_{\B} S \quad \text{and} \quad S \otimes_{\A} R \cong Y$$
and 
$$
\phi_R(\I)R \subseteq R\J \quad \text{and} \quad \phi_S(\J) S \subseteq S\I.
$$
Then 
$$X_{\I} \cong R_{\J} \otimes_{\B/\J} S_{\I} \quad \text{and} \quad S_{\I} \otimes_{\A/\I} R_{\J} \cong Y_{\J}.$$
\end{prp}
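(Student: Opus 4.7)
The plan is to construct directly a candidate unitary correspondence isomorphism
$$\tau \colon R_{\J} \otimes_{\B/\J} S_{\I} \longrightarrow (R \otimes_{\B} S)_{\I}$$
given on elementary tensors by $\tau([r]_{\J} \otimes [s]_{\I}) = [r \otimes s]_{\I}$. Combined with the quotient of a chosen isomorphism $X \cong R \otimes_{\B} S$, this delivers the first identification $X_{\I} \cong R_{\J} \otimes_{\B/\J} S_{\I}$. The second identification $S_{\I} \otimes_{\A/\I} R_{\J} \cong Y_{\J}$ then follows by a symmetric argument with the roles of $(\A,\I,R)$ and $(\B,\J,S)$ swapped. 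Observe that by Lemma~\ref{lem:quot-correspondence} the compatibility hypotheses ensure $R_{\J}$ is an $\A/\I$-$\B/\J$-correspondence and $S_{\I}$ is a $\B/\J$-$\A/\I$-correspondence, so the target balanced tensor product makes sense.

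The main obstacle is showing that $\tau$ is well-defined as a map from the balanced algebraic tensor product $R_{\J} \odot_{\B/\J} S_{\I}$. For independence of representatives, I would use the chain of identities
$$R\J \otimes_{\B} S \;=\; R \otimes_{\B} \phi_{S}(\J) S \;\subseteq\; R \otimes_{\B} S\I \;=\; (R \otimes_{\B} S)\I,$$
where the first equality is the balancing relation, the containment in the middle is exactly the hypothesis $\phi_{S}(\J)S \subseteq S\I$, and the final equality is the standard fact that interior tensor by an ideal coincides with the right action of the ideal. Thus perturbing $r$ by an element of $R\J$, or $s$ by an element of $S\I$, changes $r \otimes s$ by an element of $(R \otimes_{\B} S)\I$, which is killed by $[\,\cdot\,]_{\I}$. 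Compatibility with the balancing relation over $\B/\J$ is automatic, since already in $R \otimes_{\B} S$ we have $rb \otimes s = r \otimes \phi_{S}(b)s$ for every $b \in \B$.

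The remaining checks are routine tensor-product manipulations. Isometric behavior follows from the calculation
$$\langle [r]_{\J} \otimes [s]_{\I},\, [r']_{\J} \otimes [s']_{\I}\rangle = [\langle s, \phi_{S}(\langle r, r'\rangle) s'\rangle]_{\I} = \langle [r \otimes s]_{\I},\, [r' \otimes s']_{\I}\rangle,$$
so $\tau$ preserves the $\A/\I$-valued semi-inner product and therefore extends to an isometry between the completions. Preservation of the left $\A/\I$ action is immediate from the formula and the definition of the left action on $R_{\J}$ via Lemma~\ref{lem:quot-correspondence}. Finally, the image of $\tau$ contains every class of the form $[r \otimes s]_{\I}$, so its range is dense in $(R \otimes_{\B} S)_{\I}$; an isometry with dense range is surjective, completing the proof.
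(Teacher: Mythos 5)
Your proposal is correct and is essentially the paper's argument run in the opposite direction: the paper defines the comparison map $X_{\I} \to R_{\J} \otimes_{\B/\J} S_{\I}$ by $[\xi]_{\I} \mapsto ([\cdot]_{\J}\otimes[\cdot]_{\I})(U(\xi))$, while you build its inverse $[r]_{\J}\otimes[s]_{\I}\mapsto [r\otimes s]_{\I}$, but both hinge on the same key identity $\langle [r]_{\J}\otimes[s]_{\I},[r']_{\J}\otimes[s']_{\I}\rangle = [\langle r\otimes s, r'\otimes s'\rangle]_{\I}$ and on the hypotheses $\phi_R(\I)R\subseteq R\J$, $\phi_S(\J)S\subseteq S\I$ to make everything well defined. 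Your explicit well-definedness check via $R\J\otimes_{\B}S \subseteq (R\otimes_{\B}S)\I$ is sound and just makes visible what the paper absorbs into Lemma~\ref{lem:quot-correspondence}.
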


\begin{proof}
Since $\phi_R(\I)R \subseteq R\J$ and $\phi_S(\J) S \subseteq S\I$, by Lemma~\ref{lem:quot-correspondence} we are able to construct the $\A/\I$-correspondence $X_{\I}$, the $\B/\J$-correspondence $Y_{\I}$, the $\A/\I-\B/\J$-correspondence $R_{\J}$, and the $\B/\J-\A/\I$-correspondence $S_{\I}$.  Let $U \colon X \to R \otimes_B S$ and $V \colon S \otimes_A R \to Y$ be unitary bimodule isomorphisms. Consider the map $\overline{U} \colon X_{\I} \to R_{\J} \otimes_{\B/\J} S_{\I}$ given by $\overline{U} ( [\xi]_{\I}) = ([\cdot]_{\J} \otimes [\cdot]_{\I}) \circ U(\xi)$. Since 
\begin{align*}
([\cdot]_{\J} \otimes [\cdot]_{\I}) \circ U(\xi a) &=([\cdot]_{\J} \otimes [\cdot]_{\I}) ( U(\xi) a) = ([\cdot]_{\J} \otimes [\cdot]_{\I})( U(\xi)) [a]_{\I}\\
&= ([\cdot]_{\J} \otimes [\cdot]_{\I})( U(x)) \cdot 0=0,
\end{align*}
for all $\xi \in X$ and for all $a \in \I$, we get that $\overline{U}$ is well-defined. It is clear that $\overline{U}$ is a linear map and respects the right $\A/\I$-module structure. Since
\begin{align*}
[a]_{\I} \cdot ( [r]_{\J} \otimes [s]_{\I}) &= ( [a]_{\I} \cdot [r]_{\J} ) \otimes [s]_{\I}= [\phi_R(a)r]_{\J} \otimes [s]_{\I} = ([\cdot]_{\J} \otimes [\cdot]_{\I})( \phi_R(a) r \otimes s) \\
&= ([\cdot]_{\J} \otimes [\cdot]_{\I})( a \cdot ( r \otimes s) ). 
\end{align*}
for all $a \in \A$, for all $r \in R$ and $s \in S$, $[\cdot]_{\J} \otimes [\cdot]_{\I}$ is a left $\A/\I$-module map. Hence,
\[
[a]_{\I} \cdot \overline{U}( [\xi]_{\I}) = [a]_{\I} \cdot ([\cdot]_{\J} \otimes [\cdot]_{\I}) \circ U(\xi) = ([\cdot]_{\J} \otimes [\cdot]_{\I}) ( a \cdot U(\xi)) = ([\cdot]_{\J} \otimes [\cdot]_{\I}) ( U( a\cdot \xi))
\]
for all $a \in \A$ and for all $\xi \in X$.  Thus, $\overline{U}$ is a left $\A/\I$-module map. Since $U$ and $[\cdot]_{\J} \otimes [\cdot]_{\I}$ are surjections, $\overline{U}$ is also a surjection.

We are left with showing that $\langle \overline{U} ([\xi]_{\I}), \overline{U}( [\eta]_{\I}) \rangle_{ \A/ \I } = \langle [\xi]_{\I}, [\eta]_{\I} \rangle_{\A/\I}$ for all $\xi,\eta \in X$.  Since 
\begin{align*}
&\langle ([\cdot]_{\J} \otimes [\cdot]_{\I}) ( r \otimes s ) , ([\cdot]_{\J} \otimes [\cdot]_{\I}) (r' \otimes s') \rangle_{\A/\I}\\
&= \langle [s]_{\I}, \langle [r]_{\J}, [r']_{\J} \rangle_{\B/\J} \cdot [s']_{\I} \rangle_{\A/\I} \\
            &= \langle [s]_{\I}, [\langle r, r'\rangle_B]_{\J} \cdot [s']_{\I} \rangle_{\A/\I} \\
            &= \langle [s]_{\I}, [ \phi_S ( \langle r, r' \rangle_{\B} )s' ]_{\I} \rangle_{\A/\I} \\
            &=[ \langle s , \phi_S ( \langle r, r' \rangle_B )s' \rangle_{\A} ]_{\I} \\
            &= [ \langle r \otimes s , r' \otimes s' \rangle_A ]_{\I},
\end{align*}
for all $r,r' \in R$ and for all $s, s' \in S$ we get that $\langle ([\cdot]_{\J} \otimes [\cdot]_{\I}) (w ) , ([\cdot]_{\J} \otimes [\cdot]_{\I}) (z) \rangle_{\A/\I} = [ \langle w, z \rangle_A ]_{\I}$ for all $w, z \in R \otimes_{\B} S$.  Therefore,
\begin{align*}
  \langle \overline{U} ([\xi]_{\I}), \overline{U}( [\eta]_{\I}) \rangle_{ \A/ \I } &= \langle ([\cdot]_{\J} \otimes [\cdot]_{\I}) (U(\xi)) , ([\cdot]_{\J} \otimes [\cdot]_{\I}) (U(\eta)) \rangle_{\A/\I}  \\
    &= [ \langle U(\xi), U(\eta) \rangle_{\A} ]_{\I} \\
    &= [ \langle \xi, \eta \rangle_{\A} ]_{\I} \\
    &= \langle [\xi]_{\I}, [\eta]_{\I} \rangle_{\A/\I}
\end{align*}
for all $\xi,\eta \in X$.  Thus, completing the proof of the claim.

We conclude that $\overline{U} \colon X_{\I} \to R_{\J} \otimes_{\B/\J} S_{\I}$ is a unitary correspondence isomorphism.  A similar argument show that $\overline{V} \colon Y_{\J} \to S_{\I} \otimes_{\A/\I} R_{\J}$ is a unitary correspondence isomorphism.  Thus we proved that
$$X_{\I} \cong R_{\J} \otimes_{\B/\J} S_{\I} \quad \text{and} \quad S_{\I} \otimes_{\A/\I} R_{\J} \cong Y_{\J},$$
as required.
\end{proof}

\begin{exm}
If the correspondence $X$ is a proper $\A$-correspondence, then $X_{\I}$ is not necessarily regular when $\I = \ker(\phi_X)$. For example, let $E$ be the graph 
$$
\xymatrix{
\bullet \ar@(dl, ul)[] \ar[r] & \bullet \ar[r] &\bullet
}
$$

\bigskip
Let $X = X(E)$ be the associated graph correspondence and $\I = \bbC \delta_v$, where $v$ is the only sink.  By \cite[Example~2.4]{FMR-IUMJ2003}, $X_{\I}$ is isomorphic to the graph correspondence of the following graph 
$$
\xymatrix{
\bullet \ar@(dl, ul)[] \ar[r] & \bullet 
}
$$
\bigskip
which is not regular.  Of course, we can repeat the process again to get a regular correspondence isomorphic to the graph correspondence of 
$$
\xymatrix{
\bullet \ar@(dl, ul)[]  
}
$$
Hence, in general we will need to repeat this process of taking quotients by kernels of left actions.
\end{exm}

The following addresses the issue above, by producing from any pair $X$ and $Y$ of elementary strong shift related correspondences, a new pair $X_{\I}$ and $Y_{\J}$ of \emph{injective} correspondences which are still elementary strong shift related.  Moreover, if $X$ is an injective correspondence, then the procedure gives $\I = \{0 \}$ and hence $X \cong X_\I$.  Similarly, if $Y$ is an injective correspondence, then $Y_{\J} \cong Y$.

Let $Z$ be an $\A-\B$-correspondence and $\I\unlhd \B$ be a closed ideal.
Consider the ideal
\[
Z^{-1}(\I) = \{a\in \A\colon aZ\subseteq Z\I\} = \{ a \in \A \colon \langle z', \phi_Z(a) z \rangle \in \I , \text{ for all } z, z' \in Z \}
\]
of $\A$ from \cite[Definition 4.1]{KatII}.
It is the biggest ideal of $\A$ such that $Z^{-1}(\I)Z\subseteq Z\I$.
Consequently, $Z_{\I}$ can be endowed with the structure of an injective $\A/Z^{-1}({\I})-B/\I$-correspondence.
We need two important lemmas about this construction.
\begin{lem}\label{l:inv-intersection}
  Let $\{\I_j\}_{j\in \mathfrak J}$ be a collection of ideals of $\B$.
  Then,
  \[
    Z^{-1}(\bigcap_{j\in\mathfrak{J}} \I_j) = \bigcap_{j\in \mathfrak{J}} Z^{-1}(\I_j).
  \]
\end{lem}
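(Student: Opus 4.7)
The plan is to reduce both inclusions directly to the second characterization of $Z^{-1}(\I)$ given in the statement, namely
\[
Z^{-1}(\I) = \{ a \in \A \colon \langle z', \phi_Z(a) z \rangle \in \I \text{ for all } z, z' \in Z \},
\]
which is stated immediately before the lemma (attributed to \cite{KatII}). Once this characterization is in hand, the argument is essentially a set-theoretic unpacking of the quantifiers, so I do not expect any genuine obstacle.

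For the inclusion $Z^{-1}\bigl(\bigcap_{j}\I_j\bigr) \subseteq \bigcap_j Z^{-1}(\I_j)$, I would argue directly from the first description of $Z^{-1}(\cdot)$: if $a \in \A$ satisfies $aZ \subseteq Z\bigl(\bigcap_j \I_j\bigr)$, then since $\bigcap_j \I_j \subseteq \I_j$ for every $j$ we get $aZ \subseteq Z\I_j$, i.e. $a \in Z^{-1}(\I_j)$ for every $j$. This direction does not use the inner product characterization at all.

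For the reverse inclusion $\bigcap_j Z^{-1}(\I_j) \subseteq Z^{-1}\bigl(\bigcap_j \I_j\bigr)$, I would use the inner product characterization: if $a \in \bigcap_j Z^{-1}(\I_j)$, then for each $j$ and all $z, z' \in Z$ we have $\langle z', \phi_Z(a) z \rangle \in \I_j$. Since this holds for every $j$, the element $\langle z', \phi_Z(a) z \rangle$ lies in $\bigcap_j \I_j$ for all $z, z' \in Z$, which is precisely the condition $a \in Z^{-1}\bigl(\bigcap_j \I_j\bigr)$.

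The only subtlety worth flagging is that one genuinely needs the inner product form of the definition to swap the intersection over $j$ past the membership condition; trying to argue on the reverse inclusion using only $aZ \subseteq Z\I$ would require showing $\bigcap_j Z\I_j = Z\bigl(\bigcap_j \I_j\bigr)$, which is not automatic for closures of non-closed submodules. Since the lemma statement provides the inner product characterization for free, this potential pitfall is completely avoided, and the lemma follows in a few lines.
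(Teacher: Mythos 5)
Your proof is correct, and it takes a slightly different route from the one in the paper. The paper's argument sets $\I = \bigcap_j \I_j$ and asserts the identity $Z\I = \bigcap_j Z\I_j$, from which $aZ \subseteq Z\I$ iff $aZ \subseteq Z\I_j$ for all $j$ follows immediately; the lemma is then read off from the first form of the definition of $Z^{-1}$. That module-level identity is exactly the point you flag as ``not automatic'': it is true, but it rests on Katsura's characterization $Z\I = \{ z \in Z : \langle z, z\rangle \in \I\}$ (so that $\bigcap_j Z\I_j = \{z : \langle z,z\rangle \in \I_j \text{ for all } j\} = Z\bigcap_j \I_j$), which the paper uses without comment. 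Your proof instead pushes the intersection through the inner-product characterization of $Z^{-1}(\I)$ itself, which trades one appeal to Katsura's results for another but makes the quantifier exchange completely transparent and avoids any statement about closures of submodules. Both arguments are short and correct; yours is arguably the more self-contained given that the inner-product form of $Z^{-1}(\I)$ is already displayed immediately before the lemma, while the paper's is marginally slicker once the identity $Z\bigcap_j\I_j = \bigcap_j Z\I_j$ is granted.
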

\begin{proof}
  Let $\I = \bigcap_{j\in \mathfrak{J}} \I_j$.
  Since $Z\I = Z\bigcap_{j\in \mathfrak{J}} \I_j = \bigcap_{j\in \mathfrak{J}}Z \I_j$, we have $aZ\subseteq Z\I$ if and only if $aZ\subseteq Z\I_j$ for all $j\in \mathfrak{J}$.
  The statement then follows by definition of $Z^{-1}$ operation.
\end{proof}

\begin{lem}\label{l:inv-composition}
  Let $Z_1$ be an $\A-\B$-correspondence and $Z_2$ be a $\B-\mathcal{C}$-correspondence.
  Then, for any ideal $\I\unlhd \mathcal{C}$ we have
  \[
    (Z_1\otimes_{\B} Z_2)^{-1}(\I) = Z_1^{-1}(Z_2^{-1}(\I)).
  \]
\end{lem}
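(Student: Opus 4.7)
The plan is to use the inner-product characterization
\[
Z^{-1}(\I) = \{a \in \A : \langle z', \phi_Z(a) z\rangle \in \I \text{ for all } z, z' \in Z\}
\]
recorded in the definition preceding Lemma \ref{l:inv-intersection}, and simply unfold both sides of the claimed equality until they coincide. This avoids any need for an approximate identity or Hewitt--Cohen type argument and reduces the proof to a one-line computation of the inner product on an interior tensor product.

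First, I would observe that since the linear span of simple tensors is dense in $Z_1\otimes_\B Z_2$, and the $\mathcal{C}$-valued inner product is jointly continuous while $\I$ is closed, $a\in (Z_1\otimes_\B Z_2)^{-1}(\I)$ is equivalent to the condition
\[
\langle z_1'\otimes z_2',\, a\cdot (z_1\otimes z_2)\rangle \in \I \qquad \text{for all } z_1, z_1'\in Z_1,\ z_2,z_2'\in Z_2.
\]
The defining formula for the inner product on the interior tensor product rewrites this expression as
\[
\langle z_2',\, \phi_{Z_2}\bigl(\langle z_1',\, \phi_{Z_1}(a)z_1\rangle\bigr) z_2\rangle.
\]

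On the other side, $a\in Z_1^{-1}(Z_2^{-1}(\I))$ unfolds, by the same inner-product characterization, to $\langle z_1',\phi_{Z_1}(a)z_1\rangle \in Z_2^{-1}(\I)$ for all $z_1,z_1'\in Z_1$, and applying the characterization once more to $Z_2^{-1}(\I)$ this becomes $\langle z_2',\phi_{Z_2}(\langle z_1',\phi_{Z_1}(a)z_1\rangle) z_2\rangle \in \I$ for all $z_2,z_2'\in Z_2$. This is manifestly the same condition as the one obtained above, so the two ideals coincide.

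I expect no real obstacle: the only point requiring care is the reduction to simple tensors, but this is immediate from closedness of $\I$ combined with continuity and bilinearity of the inner product. As a sanity check one can also verify the forward inclusion directly from the ideal definition via $a(Z_1\otimes Z_2)=(aZ_1)\otimes Z_2\subseteq (Z_1 Z_2^{-1}(\I))\otimes Z_2 = Z_1\otimes(Z_2^{-1}(\I)Z_2)\subseteq Z_1\otimes Z_2\I=(Z_1\otimes Z_2)\I$, which gives $Z_1^{-1}(Z_2^{-1}(\I))\subseteq (Z_1\otimes_\B Z_2)^{-1}(\I)$; however, the inner-product unfolding yields both inclusions at once and is therefore the cleanest route.
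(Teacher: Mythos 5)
Your proof is correct and is essentially the paper's own argument: both unfold $(Z_1\otimes_\B Z_2)^{-1}(\I)$ via the inner-product characterization of $Z^{-1}(\I)$, reduce to elementary tensors by linearity, continuity, and closedness of $\I$, and identify the resulting condition with the twice-unfolded condition for $Z_1^{-1}(Z_2^{-1}(\I))$. No substantive difference.
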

\begin{proof}
  Let $a\in \A$ be an arbitrary element.
  By \cite[Proposition 1.3]{KatII}, we have $a(Z_1 \otimes Z_2)\subseteq (Z_1\otimes Z_2)\I$ if and only if $\langle \xi', \phi_{Z_1\otimes Z_2}(a) \xi\rangle \in \I$ for all $\xi, \xi'\in Z_1\otimes Z_2$.
  By linearity and continuity, it is enough to show the inclusion only for elementary tensors $\xi = \xi_1\otimes \xi_2$, $\xi'=\xi_1'\otimes \xi_2'$ .
  We thus have that $a\in (Z_1\otimes Z_2)^{-1}(\I)$ if and only if
  \[
    \langle \xi_1'\otimes \xi_2', \phi_{Z_1\otimes Z_2}(a)\xi_1\otimes \xi_2\rangle = \langle \xi_2', \phi_{Z_2}\left(\langle \xi_1', \phi_{Z_1}(a)\xi_1\rangle\right) \xi_2\rangle \in \I
  \]
  for all $\xi_i,\xi_i'\in Z_i$.
  Again, by applying \cite[Proposition 1.3]{KatII} twice, the latter is equivalent to $a\in Z_1^{-1}(Z_2^{-1}(\I))$.
 \end{proof} 

 For an $\A$-correspondence $X$, we say that an ideal $\I\unlhd \A$ is \emph{fully $X$-invariant} if $\I = X^{-1}(\I)$.
The name comes from the fact that it is a strengthening of Katsura's notion of $X$-invariance \cite[Definition 4.8]{KatII}.
The ideal $\A\unlhd \A$ is fully $X$-invariant, so the set of ideals with this property is non-empty.
Moreover, by Lemma~\ref{l:inv-intersection} any intersection of fully $X$-invariant ideals is fully $X$-invariant.
Therefore, there is the smallest fully $X$-invariant ideal which we denote by $\I_X$.
We have $X^{-1}(0) = \ker \phi_X$, so $0$ is fully $X$-invariant if and only if $X$ is injective.
Hence, as $0$ is the smallest ideal of $\A$, we have that $\I_X = 0$ if and only if $X$ is injective.

\begin{prp} \label{prp:to-injective}
Let $X$ be an $\A$-correspondence and $Y$ a $\B$-correspondence. Suppose there exist an $\A-\B$-correspondence $R$ and a $\B-\A$ correspondence $S$ such that
$$
X \cong R\otimes_{\B} S \ \ \text{and} \ \ S\otimes_{\A}R \cong Y.
$$
Then $X_{\I_X}, Y_{\I_X}$ are injective C*-correspondences over $\A/\I_X$ and $\B/\I_Y$ respectively, satisfying
$$
X_{\I_X} \cong R_{\I_Y}\otimes_{\B / \I_Y} S_{\I_X} \ \ \text{and} \ \ S_{\I_X}\otimes_{\A / \I_X}R_{\I_Y} \cong Y_{\I_Y}.
$$
Consequently, $R_{\I_Y}$ and $S_{\I_X}$ are also injective C*-correspondences.
\end{prp}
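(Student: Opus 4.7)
The plan is to apply Proposition \ref{p:one-step-quotient} with ideals $\I = \I_X \unlhd \A$ and $\J = \I_Y \unlhd \B$, and then verify that the four resulting correspondences are all injective by identifying the relevant kernels with trivial quotients. The central calculation is to show that $R^{-1}(\I_Y) = \I_X$ and $S^{-1}(\I_X) = \I_Y$, which gives both the compatibility hypothesis needed to invoke Proposition \ref{p:one-step-quotient} and the injectivity of all four quotient correspondences.

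First I would verify the compatibility conditions $\phi_R(\I_X) R \subseteq R\I_Y$ and $\phi_S(\I_Y) S \subseteq S \I_X$, which, by the definition of the $Z^{-1}$ operation, are equivalent to the inclusions $\I_X \subseteq R^{-1}(\I_Y)$ and $\I_Y \subseteq S^{-1}(\I_X)$. I would establish the first inclusion by showing that $R^{-1}(\I_Y)$ is fully $X$-invariant; then minimality of $\I_X$ forces $\I_X \subseteq R^{-1}(\I_Y)$. Using Lemma \ref{l:inv-composition} together with the identification $X \cong R\otimes_{\B} S$, and then applied in reverse via $Y \cong S\otimes_{\A} R$, one has
\[
X^{-1}(R^{-1}(\I_Y)) = R^{-1}\!\left(S^{-1}(R^{-1}(\I_Y))\right) = R^{-1}\!\left((S\otimes R)^{-1}(\I_Y)\right) = R^{-1}(Y^{-1}(\I_Y)) = R^{-1}(\I_Y),
\]
the last equality because $\I_Y$ is fully $Y$-invariant. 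Symmetry gives $\I_Y \subseteq S^{-1}(\I_X)$, so Proposition \ref{p:one-step-quotient} applies and delivers the two displayed isomorphisms.

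It remains to check injectivity of the four quotient correspondences. For $X_{\I_X}$, by Lemma \ref{lem:quot-correspondence} the kernel of the induced left action of $\A/\I_X$ is $X^{-1}(\I_X)/\I_X$, which equals $\I_X/\I_X = 0$ since $\I_X$ is fully $X$-invariant; the same argument handles $Y_{\I_Y}$. For $R_{\I_Y}$, the kernel of the induced left action is $R^{-1}(\I_Y)/\I_X$, so injectivity follows from the equality $R^{-1}(\I_Y)=\I_X$. One inclusion was already established; for the reverse, apply monotonicity of $R^{-1}$ to the inclusion $\I_Y \subseteq S^{-1}(\I_X)$ and then invoke Lemma \ref{l:inv-composition}:
\[
R^{-1}(\I_Y) \subseteq R^{-1}(S^{-1}(\I_X)) = (R\otimes S)^{-1}(\I_X) = X^{-1}(\I_X) = \I_X.
\]
The analogous argument, swapping the roles of $R$ and $S$ and of $X$ and $Y$, shows $S^{-1}(\I_X) = \I_Y$ and therefore that $S_{\I_X}$ is injective as well.

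The main obstacle is the verification that $R^{-1}(\I_Y)$ is fully $X$-invariant, which requires threading Lemma \ref{l:inv-composition} through both correspondence isomorphisms $X \cong R\otimes S$ and $Y \cong S \otimes R$; once this symmetric manipulation is in hand, the full $X$- and $Y$-invariance of the ideals $R^{-1}(\I_Y)$ and $S^{-1}(\I_X)$ combined with minimality forces both equalities $R^{-1}(\I_Y) = \I_X$ and $S^{-1}(\I_X) = \I_Y$, after which injectivity and the desired isomorphisms follow immediately from Proposition \ref{p:one-step-quotient}.
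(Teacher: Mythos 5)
Your proposal is correct and follows essentially the same route as the paper: the key step of showing $R^{-1}(\I_Y)$ is fully $X$-invariant via Lemma \ref{l:inv-composition}, then invoking minimality of $\I_X$ and Proposition \ref{p:one-step-quotient}, is exactly the paper's argument. Your extra verification of the equality $R^{-1}(\I_Y)=\I_X$ to get injectivity of $R_{\I_Y}$ is a valid (slightly more explicit) alternative to the paper's shortcut of deducing injectivity of the factors from injectivity of the tensor product $X_{\I_X}\cong R_{\I_Y}\otimes S_{\I_X}$.
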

\begin{proof}
  We can apply Lemma \ref{l:inv-composition} to $X = R\otimes_{\B} S$ to get
  \[
    X^{-1}(R^{-1}(\I_Y)) \cong R^{-1}(S^{-1}(R^{-1}(\I_Y)))=R^{-1}(Y^{-1}(\I_Y)) = R^{-1}(\I_Y),
  \]
  where the last equality follows from the full $Y$-invariance of $\I_Y$.
  Therefore, $R^{-1}(\I_Y)$ is also fully $X$-invariant and, hence, $\I_X\subseteq R^{-1}(\I_Y)$ by definition of $\I_X$.
  This proves that $\phi_R(\I_X)R\subseteq R\I_Y$ and the inclusion $\phi_S(\I_Y)S \subseteq S\I_X$ can be obtained analogously.

  We can now apply Proposition \ref{p:one-step-quotient} to obtain
  \[
    X_{\I_X} \cong R_{\I_Y}\otimes_{\B / \I_Y} S_{\I_X} \ \ \text{and} \ \ S_{\I_X}\otimes_{\A / \I_X}R_{\I_Y} \cong Y_{\I_Y}.
\]
If $[a]_{\I_X}\in \A/\I_X$ is in the kernel of $\phi_{X_{\I_X}}$, then $\phi_X(a)X\subset X\I_X$, which implies $a\in \I_X$ or $[a]_{\I_X}=0$ since $\I_X$ is fully $X$-invariant.
Therefore, $X_{\I_X}$ is an injective $\A/\I_X$-correspondence and, similarly, $Y_{\I_Y}$ is an injective $\B/\I_Y$-correspondence.
\end{proof}

Our next order of business is to cut down an elementary shift relation between injective $X$ and $Y$ so as to obtain an elementary shift relations between full or regular C*-correspondences.

\begin{lem} \label{lem:invariant-full-ideal}
Let $X$ be an $\A$-correspondence and let $Y$ be a $\B$-corres\-pondence. Suppose $R$ is an $\A-\B$-correspondence and $S$ is a $\B-\A$-correspondence such that 
$$
X \cong R \otimes_{\B} S \quad \text{and} \quad Y \cong S \otimes_{\A} R.
$$
Let $\I = \langle X, X \rangle$ and $\J = \langle Y,Y \rangle$.  Then 
$$
\I R \subseteq R \J \quad \text{and} \quad \J S \subseteq S \I
$$
\end{lem}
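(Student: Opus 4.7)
The plan is to reduce the containment $\I R \subseteq R\J$ to showing that $\langle r', \phi_R(a) r\rangle_\B \in \J$ for every $r, r' \in R$ and every $a \in \I$, and then to verify this directly by unpacking the tensor-product inner products on $X \cong R\otimes_\B S$ and $Y \cong S\otimes_\A R$. The reduction is exactly the approximate-unit trick already used at the end of the proof of Lemma \ref{lem:compatible-condition}: once $\langle \phi_R(a)r, \phi_R(a)r\rangle_\B \in \J$, choosing a positive approximate identity $(e_\lambda)$ of $\J$ gives $\phi_R(a)r \cdot e_\lambda \to \phi_R(a)r$ with $\phi_R(a)r \cdot e_\lambda \in R\J$, placing the limit in $R\J$.

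Next, to produce these $\J$-valued inner products I would reduce, by linearity and continuity, to $a$ of the form $a = \langle r_1\otimes s_1, r_2\otimes s_2\rangle_\A$, since the isomorphism $X\cong R\otimes_\B S$ identifies $\I = \langle X,X\rangle_\A$ with the closed linear span of such elementary inner products. Expanding the interior-tensor-product inner product in $R\otimes_\B S$ gives
\[
  a = \bigl\langle s_1,\; \phi_S(\langle r_1, r_2\rangle_\B)\, s_2\bigr\rangle_\A = \langle s_1, s_2'\rangle_\A,
\]
where I set $s_2' := \phi_S(\langle r_1, r_2\rangle_\B)\, s_2 \in S$.

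The crucial observation is that the inner product I want to control is already an inner product on $Y$: using the formula for the $\B$-valued inner product on $S\otimes_\A R$, for any $r' \in R$,
\[
  \langle r', \phi_R(a)\, r\rangle_\B = \bigl\langle r',\; \phi_R(\langle s_1, s_2'\rangle_\A)\, r\bigr\rangle_\B = \langle s_1 \otimes r',\; s_2' \otimes r\rangle_\B \in \langle Y, Y\rangle_\B = \J.
\]
Combined with the first paragraph this yields $\phi_R(a) r \in R\J$, hence $\I R \subseteq R\J$. A completely symmetric argument, interchanging the roles of $X$ and $Y$ and of $R$ and $S$, establishes $\J S \subseteq S\I$.

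I do not anticipate a substantive obstacle: the only nontrivial step is spotting the ``crossover'' identity that turns an inner product of a compact-like operator on $R$ against elements of $R$ into an inner product on the opposite tensor product $S\otimes_\A R \cong Y$. Everything else is a density argument together with the same approximate-identity technique already appearing in Lemma \ref{lem:compatible-condition}.
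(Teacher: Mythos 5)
Your proof is correct and follows essentially the same route as the paper: the heart of both arguments is the same ``crossover'' identity showing that inner products of elements of $\phi_R(\I)R$ against $R$ land in $\langle S\otimes_\A R, S\otimes_\A R\rangle = \J$. The only cosmetic difference is the final step, where the paper invokes the factorization $\I R = \I R\,\langle \I R,\I R\rangle \subseteq R\J$ while you use a positive approximate identity of $\J$; both are standard and equivalent ways to finish.
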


\begin{proof}
We prove the inclusion $\I R \subseteq R\J$, where the inclusion $\J S \subseteq S \I$ is be proven similarly.

Note first that $\langle R \otimes S, R\otimes S \rangle = \I$ because $R\otimes S \cong X$. Now, for $r_1,r_2 \in R$, $s_1,s_2 \in S$ elements of the form $\langle r_1 \otimes s_1 , r_2 \otimes s_2 \rangle$ generate the ideal $\langle R \otimes S, R\otimes S \rangle$, and for $r_1',r_2' \in R$ since
$$
\langle \phi_R(\langle r_1 \otimes s_1 , r_2 \otimes s_2 \rangle) r_1',r_2' \rangle = \langle \phi_S(\langle s_1, \phi_R(\langle r_1,r_2 \rangle) s_2 \rangle) r_1',r_2' \rangle = 
$$
$$
\langle r_1', \phi_S(\langle s_2, \phi_R(\langle r_2,r_1 \rangle) s_1 \rangle) r_2' \rangle = \langle s_2 \otimes r_1', \phi_R(\langle r_2,r_1 \rangle) s_1 \otimes r_2' \rangle,
$$
we see that $\langle \I R, \I R \rangle \subseteq \langle S \otimes R, S \otimes R \rangle = \J$, where the latter equality holds because $S\otimes R \cong Y$. Thus, since $\I R = \I R \langle \I R, \I R \rangle$, we get that $\I R = \I R \langle \I R, \I R \rangle \subseteq \I R \J \subseteq R \J$. This concludes the proof.
\end{proof}

The following is essentially from \cite[Lemma~4.2]{FMR-IUMJ2003}, where the assumption of injectivity of $\phi$ can automatically be dispensed with.

\begin{lem}\label{lem:gen-fmr}
Let $X$ be a Hilbert $\A$-module and let $Y$ be a Hilbert $\B$-module.  Suppose $\phi \colon \A \to \L(Y)$ is a $*$-homomorphism and suppose $V \in \L(X)$ such that $V \otimes 1 \in \K( X \otimes_{\A} Y)$.  Then $\ran(V) \subseteq X\phi^{-1}(\K(Y))$.
\end{lem}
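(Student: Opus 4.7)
The plan is to reduce the statement to showing that $\langle Vx, Vx' \rangle \in \J := \phi^{-1}(\K(Y))$ for all $x, x' \in X$. Note that $\J$ is a closed ideal of $\A$ since it is the preimage of the ideal $\K(Y)\unlhd \L(Y)$. Granted this, one fixes an approximate unit $(e_\lambda)$ of $\J$ and runs the same approximate-unit estimate as at the end of the proof of Lemma~\ref{lem:compatible-condition}: for each $x \in X$ the inner product $\langle Vx - Vx\cdot e_\lambda, Vx - Vx\cdot e_\lambda\rangle$ tends to zero in norm, hence $Vx \in X\J = X\phi^{-1}(\K(Y))$, and so $\ran(V) \subseteq X\phi^{-1}(\K(Y))$.

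To verify the reduced claim, I would introduce for each $x \in X$ the \emph{creation operator}
\[
L_x \colon Y \to X \otimes_\A Y, \quad L_x(y) = x \otimes y.
\]
Using the Hilbert C*-module Cauchy--Schwarz inequality $\langle x, x'\rangle^*\langle x, x'\rangle \leq \|x\|^2 \langle x', x'\rangle$, the map $L_x$ is bounded and adjointable with adjoint $L_x^* \colon X\otimes_\A Y \to Y$ determined on elementary tensors by the formula $L_x^*(x' \otimes y') = \phi(\langle x, x'\rangle) y'$; the adjoint identity is a direct check using the inner product on $X\otimes_\A Y$. The key identity is then
\[
\phi(\langle Vx, Vx'\rangle) \;=\; L_{Vx}^*\, (V \otimes 1)\, L_{x'} \quad \text{in } \L(Y),
\]
which follows by evaluating both sides on an arbitrary $y \in Y$ and using $(V \otimes 1) L_{x'}(y) = Vx' \otimes y$ together with $L_{Vx}^*(Vx' \otimes y) = \phi(\langle Vx, Vx'\rangle) y$.

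With this identity in place, the assumption $V \otimes 1 \in \K(X \otimes_\A Y)$ immediately yields $\phi(\langle Vx, Vx'\rangle) \in \K(Y)$, because $\K(X\otimes_\A Y)$ is a two-sided ideal of $\L(X\otimes_\A Y)$ absorbing adjointable operators on either side. Hence $\langle Vx, Vx'\rangle \in \J$, which is exactly what was needed. The only step that requires genuine care is the construction of $L_x^*$ on the \emph{separated} completion $X \otimes_\A Y$ rather than on the algebraic tensor product, but this is handled by the norm estimate $\|\phi(\langle x, x'\rangle) y'\| \leq \|x\|\cdot\|x' \otimes y'\|$ which follows from the same Cauchy--Schwarz bound. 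Crucially, this entire argument is formula-based and never invokes injectivity of $\phi$, which is precisely why the corresponding hypothesis appearing in \cite[Lemma~4.2]{FMR-IUMJ2003} can be dispensed with in our setting.
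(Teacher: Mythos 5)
Your proof is correct and follows essentially the same route as the paper's: both hinge on the creation operators $T_\xi(\eta)=\xi\otimes\eta$ and the identity $T_\xi^*(V\otimes 1)T_{\xi'}=\phi(\langle \xi,V\xi'\rangle)$, together with the fact that compacts are absorbed under composition with adjointables (which the paper verifies directly on rank-one operators). Your added approximate-unit argument at the end just makes explicit the paper's final step that $\langle \xi,V\xi'\rangle\in\phi^{-1}(\K(Y))$ for all $\xi$ forces $V\xi'\in X\phi^{-1}(\K(Y))$.
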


\begin{proof}
For each $\xi \in X$, define $T_{\xi} \colon Y \to X \otimes_{\A} Y$ by $T_{\xi}(\eta) = \xi \otimes \eta$ for all $\eta \in Y$.  First note that for all $\xi, \xi' \in X$
\begin{align*}
    T_{\xi}^* (V \otimes 1 ) T_{\xi'} (\eta) &= T_{\xi}^* (V \otimes 1)(\xi' \otimes \eta) \\
    &= T_{\xi}^* ( V\xi' \otimes \eta) \\
    &= \phi ( \langle \xi, V\xi' \rangle ) \eta.
\end{align*}
Therefore, $T_{\xi}^* (V \otimes 1 ) T_{\xi'}= \phi ( \langle \xi, V\xi' \rangle )$.  

We first claim that $T_{\xi}^* \K( X \otimes_{\A} Y) T_{\xi'} \subseteq \K(Y)$ for all $\xi, \xi' \in X$.  Let $\xi, \xi' \in X$.  Then
\begin{align*}
    T_{\xi}^* \theta_{ \xi_1 \otimes \eta_1, \xi_2 \otimes \eta_2} T_{\xi'} ( \eta) &= T_{\xi}^* ( \xi_1 \otimes \eta_1 \langle \xi_2 \otimes \eta_2 , \xi' \otimes \eta \rangle) \\
    &= (\phi(\langle \xi, \xi_1 \rangle)\eta_1 ) \langle \eta_2, \phi(\langle \xi_2, \xi' \rangle) \eta \rangle \\
    &= (\phi(\langle \xi, \xi_1 \rangle )\eta_1 ) \langle \phi ( \langle \xi' , \xi_2 \rangle ) \eta_2, \eta \rangle \\
    &= \theta_{\phi(\langle \xi, \xi_1 \rangle )\eta_1 , \phi ( \langle \xi' , \xi_2 \rangle_A ) \eta_2 }(\eta)
\end{align*}
for all $\eta \in Y$.  Hence, $T_{\xi}^* \theta_{ \xi_1 \otimes \eta_1, \xi_2 \otimes \eta_2} T_{\xi'}=\theta_{\phi(\langle \xi, \xi_1 \rangle )\eta_1 , \phi ( \langle \xi' , \xi_2 \rangle ) \eta_2 } \in \K(Y)$.  This proves our claim.

Since $V \otimes 1 \in \K(X \otimes_\A Y)$, we get from our claim that
$$
\phi ( \langle \xi, V\xi' \rangle )=T_{\xi}^* (V \otimes 1 ) T_{\xi'} \in \K(Y)
$$
for all $\xi,\xi' \in X$. Thus, $\langle \xi , V\xi' \rangle \in \phi^{-1}(\K(Y))$ which implies that for all $\xi' \in X$, $V\xi' \in X \phi^{-1}(\K(Y))$.
\end{proof}

\begin{lem} \label{l:invariant-ideal}
Let $X$ be an injective $\A$-correspondence and let $Y$ be an injective $\B$-correspondence. Suppose $R$ is an $\A-\B$-correspondence and $S$ is a $\B-\A$-correspondence such that 
$$
X \cong R \otimes_{\B} S \quad \text{and} \quad Y \cong S \otimes_{\A} R.
$$
Let $\I = \phi_X^{-1}(\K(X))$, $\I' = \phi_R^{-1}(\K(R))$, $\J = \phi_Y^{-1}(\K(Y))$, and $\J' = \phi_S^{-1} (\K(S))$.  Then 
$$
\I R = R\J' \quad \text{and} \quad \J S = S\I'
$$
\end{lem}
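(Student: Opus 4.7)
My plan is to prove both equalities $\I R = R\J'$ and $\J S = S\I'$ via double inclusions. The second equality will be entirely symmetric to the first, obtained by replacing the isomorphism $X \cong R \otimes_\B S$ with $Y \cong S \otimes_\A R$ and interchanging the roles of $(R,\A)$ with $(S,\B)$. I therefore focus on proving $\I R = R\J'$.

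For the forward inclusion $\I R \subseteq R\J'$, the argument will be a direct application of Lemma~\ref{lem:gen-fmr}. Given $a \in \I$, by definition $\phi_X(a) \in \K(X)$; under $X \cong R \otimes_\B S$ this identifies with the ampliation $\phi_R(a) \otimes 1_S \in \K(R \otimes_\B S)$. I will invoke Lemma~\ref{lem:gen-fmr} with the outer module being $R$ (viewed as right Hilbert $\B$-module), the inner correspondence being $S$ (with left action $\phi_S$), and $V = \phi_R(a) \in \L(R)$, to conclude $\ran \phi_R(a) \subseteq R \cdot \phi_S^{-1}(\K(S)) = R\J'$. Closing up over $a \in \I$ yields $\I R \subseteq R\J'$.

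For the reverse inclusion $R\J' \subseteq \I R$, I plan to leverage the injectivity of $X$. First, injectivity of $\phi_X$ forces injectivity of $\phi_R$ (as $\phi_R(a) \otimes 1_S = 0$ would imply $\phi_X(a) = 0$ and hence $a = 0$), placing the setup inside the \emph{iff} form of the underlying FMR characterization \cite[Lemma~4.2]{FMR-IUMJ2003}, which identifies $\I$ with $\phi_R^{-1}(\K(R\J'))$. Given $r \in R$ and $b \in \J'$, I will then approximate $\phi_S(b) \in \K(S)$ by finite sums $\sum_i \theta_{s_i,t_i}$ and lift them to compact operators $\theta_{r \otimes s_i, r \otimes t_i} \in \K(X)$; combined with an approximate-identity argument for $\I$ acting via $\phi_R$, this should express $rb$ as a limit of elements of $\phi_R(\I)R$.

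The main obstacle will be the reverse inclusion: the forward direction is essentially bookkeeping from Lemma~\ref{lem:gen-fmr}, but the reverse requires the full iff form of the FMR characterization (activated by injectivity of the left actions) together with a careful approximation argument inside $\K(R\J')$ in order to pass between the Katsura ideal $\phi_R(\I) \subseteq \L(R)$ and elements of $R\J' \subseteq R$.
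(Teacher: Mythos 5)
Your proof of the forward inclusion $\I R \subseteq R\J'$ is exactly the paper's argument and is correct: for $a\in\I$ the operator $\phi_R(a)\otimes 1$ is compact on $R\otimes_\B S\cong X$, and Lemma~\ref{lem:gen-fmr} applied with outer module $R$, inner module $S$ and $V=\phi_R(a)$ gives $\phi_R(a)R\subseteq R\,\phi_S^{-1}(\K(S))=R\J'$.

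The reverse inclusion is where your sketch has a genuine gap, and it is not the ``bookkeeping'' half. A small point first: the \emph{iff} form of \cite[Lemma~4.2]{FMR-IUMJ2003} applied to $R\otimes_\B S$ requires injectivity of $\phi_S$ (the left action on the \emph{second} tensor factor), which you should extract from $\ker\phi_S\subseteq\ker\phi_{S\otimes R}=\ker\phi_Y=\{0\}$, not from injectivity of $\phi_R$. More seriously, even granting the identification $\I=\phi_R^{-1}(\K(R\J'))$, this only describes \emph{which} elements of $\A$ lie in $\I$; it gives no lower bound on the size of $\phi_R(\I)$ inside $\K(R\J')$, and both of your closing moves founder on exactly this. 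The finite-rank approximants $\theta_{r\otimes s_i,\,r\otimes t_i}$ lie in $\K(X)$, not in $\phi_X(\I)$, so they produce no elements of $\I$; and for an approximate identity $(e_\lambda)$ of the ideal $\I\unlhd\A$, the convergence $\phi_R(e_\lambda)\xi\to\xi$ for $\xi\in R\J'$ is equivalent to $\xi\in\overline{\phi_R(\I)R}=\I R$, i.e.\ it is precisely the statement being proved, so the argument is circular. The paper proceeds differently: it proves the operator identity $T_{r_1}\phi_S(b)T_{r_2}^*=\theta_{r_1b,\,r_2}\otimes 1$ for $b\in\J'$, concludes $\theta_{r_1b,\,r_2}\otimes 1\in\K(R\otimes_\B S)$, asserts that this operator equals $\phi_R(a)\otimes 1$ for some $a\in\I$, deduces $\theta_{r_1b,\,r_2}=\phi_R(a)$ from injectivity via \cite[Lemma~4.2]{FMR-IUMJ2003}, and hence gets $r_1b\langle r_2,r\rangle=\phi_R(a)r\in\I R$; density of $R\J'\langle R\J',R\J'\rangle$ in $R\J'$ then finishes. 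Your sketch contains no substitute for the step that exhibits an actual element of $\I$ acting as $\theta_{r_1b,\,r_2}$, and that step is the entire content of the reverse inclusion. (I would add that this existence of $a\in\I$ is itself the delicate point --- membership of $\theta_{r_1b,\,r_2}\otimes 1$ in $\K(R\otimes_\B S)$ does not by itself place it in the image of $a\mapsto\phi_R(a)\otimes 1$ --- so any write-up of this half must address it head-on rather than by an approximation heuristic.)
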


\begin{proof}
For any $a \in \I$ we have $\phi_{R} (a) \otimes 1 \in \K ( R \otimes_{\B} S)$ since $\phi_X( a) \in \K(X)$ and $X \cong R \otimes_{\B} S$.  By Lemma~\ref{lem:gen-fmr}, $\phi_R(a)R \subseteq R\J'$.  Thus, $\I R \subseteq R\J'$.  

Let $r_1, r_2, r \in R$, let $s \in S$, and let $b \in \J'$.  Then
\begin{align*}
    T_{r_1} \phi_S(b) T_{r_2}^* ( r \otimes s) &= T_{r_1} \phi_S(b) ( \phi_S ( \langle r_2, r \rangle ) s ) \\
            &= r_1 \otimes \phi_S(b) \phi_S ( \langle r_2, r \rangle ) s \\
            &= r_1 \otimes \phi_S ( b \langle r_2, r\rangle) s \\
            &= r_1 b \langle r_2, r\rangle \otimes s \\
            &= \theta_{ r_1 b , r_2 }(r) \otimes s \\
            &= (\theta_{r_1 b , r_2 } \otimes 1) ( r \otimes s ).
\end{align*}
Thus, $T_{r_1} \phi_S(b) T_{r_2}^*= \theta_{r_1 b , r_2 } \otimes 1$ for all $r_1, r_2 \in R$.  Since $b \in \J' = \phi_S^{-1} ( \K(S))$ we have that $\phi_S(b) \in \K(S)$, and as straightforward computation on rank one operators shows, we get that
$$
\theta_{r_1 b , r_2 }\otimes 1= T_{r_1} \phi_S(b) T_{r_2}^* \in \K( R \otimes_B S).
$$

Since $\I = \phi_X^{-1} ( \K(X)) = \phi_{R \otimes_B S}^{-1}( \K( R \otimes_B S))$, by the computation above we get that there exists $a \in \I$ such that
$$
\theta_{r_1 b , r_2 }\otimes 1 = \phi_R(a) \otimes 1.
$$
Since $\phi_X$ and $\phi_Y$ are injective, $\phi_R$ and $\phi_S$ are injective. Hence, by \cite[Lemma~4.2]{FMR-IUMJ2003}, $\phi_R(a) = \theta_{r_1 b , r_2 }$.  In particular, for all $r \in R$,
$$
r_1b \langle r_2, r \rangle = \phi_R(a) r \in \I R.
$$
Therefore, as $b\in \J'$ is arbitrary, we have
$$
R\J' \langle R\J', R\J' \rangle_{\J'} \subseteq \I R.
$$
Since $R\J'$ is a $\J'$-Hilbert module, $R\J' \langle R\J' , R\J' \rangle_{\J'}$ is dense in $R\J'$.  Therefore, $R\J' \subseteq \I R$.  We have just shown $\I R \subseteq R\J' \subseteq \I R$, thus, $\I R=R\J'$.  The symmetric argument shows that $\J S = S\I'$.
\end{proof}

Let $X$ be a $\A$-correspondence and $Y$ a $\B$-correspondence. Suppose $R$ is an $\A-\B$-correspondence and suppose $S$ is a $\B-\A$-correspondence such that $X \cong R \otimes_{\B} S$. Let $\I \lhd \A$ and $\J \lhd \B$ be two ideals. Then there is always an isometric bimodule map $\overline{\psi}: \I R \J \otimes_{\J} S \I \rightarrow \I R \otimes_{\B} S \I \cong \I X \I$. Indeed, define $\psi \colon \I R \J \times \J S \I \to \I R \otimes_{\B} S \I$ by $\psi( r, s) = r \otimes_{\B} s$.  It is clear that $\psi$ is bilinear and balanced since 
$$
\psi( rb, s) = rb \otimes_{B} s = r \otimes_{B} bs = \psi( r, b s)
$$
for all $r \in \I R \J$, all $s\in \J S \I$, and all $b \in \J$. Thus, $\psi$ induces a module map from the balanced algebraic tensor product $\overline{\psi} \colon \I R \J \odot_{\J} \J S \I \to \I R \otimes_{\B} S \I$. Since 
\begin{align*}
\langle \overline{\psi} ( r_1 \otimes_{\J} b_1 s_1 ), \overline{\psi} ( r_2 \otimes_{\J} b_2 s_2 ) \rangle_{\I} &= \langle \overline{\psi} ( r_1 b_1 \otimes_{\J} s_1 ), \overline{\psi} ( r_2 b_2 \otimes_{\J} s_2 ) \rangle_{\I}  \\
&= \langle r_1 b_1 \otimes_{\B} s_1 , r_2 b_2 \otimes_{\B} s_2 \rangle_{\I} \\
&= \langle s_1 , \phi_S (\langle r_1 b_1 , r_2 b_2 \rangle_B ) s_2 \rangle_{\A} \\
&= \langle r_1 b_1 \otimes_{\J} s_1, r_2 b_2 \otimes_{\J} s_2 \rangle_{\I} \\
&= \langle r_1 \otimes_{\J} b_1 s_1 , r_2 \otimes_{\J} b_2 s_2 \rangle_{\I}
\end{align*}
for all $r_1, r_2 \in R \J$, for all $s_1, s_2 \in S \I$, and for all $b_1, b_2 \in \J$. Thus, we see that $\overline{\psi}$ can be extended to a bimodule map which we continue to denote by $\overline{\psi}$ from $\I R \J \otimes_{\J} \J S \I$ to $\I R \otimes_{B} S \I$.  Moreover, the above computation also implies 
$$
\langle \overline{\psi}(\xi), \overline{\psi}(\eta) \rangle_{\I} = \langle \xi , \eta \rangle_{\I}
$$
for all $\xi,\eta \in \I R \J \otimes_{\J} \J S\I$, so that $\overline{\psi}$ is a unitary correspondence isomorphism onto its image.

\begin{prp} \label{prp:to-full}
Let $X$ be a $\A$-correspondence and let $Y$ be a $\B$-correspondence.  Suppose $R$ is an $\A-\B$-correspondence and suppose $S$ is a $\B-\A$-correspondence such that 
$$
X \cong R \otimes_{\B} S \quad \text{and} \quad Y \cong S \otimes_{\A} R.
$$
Let $\I = \langle X,X \rangle$ and $\J = \langle Y, Y \rangle$.  Then  
$$
\I R \J \otimes_{\J} \J S \I \cong \I X \I \quad \text{and} \quad \J S \I \otimes_{\I} \I R \J \cong \J Y \J.$$
Moreover, if $X$ is full then $ \J Y \J$ is also full.
\end{prp}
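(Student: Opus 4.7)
The plan is to exploit the isometric bimodule map $\overline{\psi} \colon \I R \J \otimes_{\J} \J S \I \to \I R \otimes_{\B} S \I$ constructed in the paragraph preceding the proposition. Since the unitary isomorphism $R \otimes_{\B} S \cong X$ restricts to a unitary correspondence isomorphism $\I R \otimes_{\B} S \I \cong \I X \I$, the first desired isomorphism $\I R \J \otimes_{\J} \J S \I \cong \I X \I$ will follow once I show that $\overline{\psi}$ is surjective; the second isomorphism $\J S \I \otimes_{\I} \I R \J \cong \J Y \J$ is then obtained by a symmetric argument after swapping the roles of $(X, R, \I)$ and $(Y, S, \J)$.

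For surjectivity, I first note that Lemma~\ref{lem:invariant-full-ideal} gives $\I R \subseteq R \J$ and $\J S \subseteq S \I$, which standard approximate-identity arguments upgrade to $\I R = \I R \J$ and $\J S = \J S \I$. Then, given an elementary tensor $r \otimes_{\B} s$ with $r \in \I R$ and $s \in S \I$, I would pick an approximate identity $\{ e_\lambda \}$ of $\J$ and consider the element $r e_\lambda \otimes_{\J} e_\lambda s$ of $\I R \J \otimes_{\J} \J S \I$. Balancing over $\B$ gives $\overline{\psi}(r e_\lambda \otimes_{\J} e_\lambda s) = r e_\lambda^2 \otimes_{\B} s$, and since $r \in R \J$ we have $r e_\lambda^2 \to r$; hence $r \otimes_{\B} s$ lies in the (closed) image of the isometry $\overline{\psi}$, yielding surjectivity by density. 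I expect this step to be the main technical obstacle, as the approximate-identity juggling must be checked to land the intermediate tensors in the correct submodules $\I R \J$ and $\J S \I$ simultaneously, which is precisely what Lemma~\ref{lem:invariant-full-ideal} enables.

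For the ``moreover'' clause, assume $X$ is full, so $\I = \A$. Then Lemma~\ref{lem:invariant-full-ideal} gives $R = \I R \subseteq R \J$, forcing $R = R \J$, and hence $\langle R, R \rangle = \J \langle R, R \rangle \J \subseteq \J$. Combined with the reverse inclusion $\J = \langle Y, Y \rangle \subseteq \langle R, R \rangle$ that falls out of the formula for inner products in $Y \cong S \otimes_{\A} R$, this yields $\langle R, R \rangle = \J$. Using the standard Hilbert module identity $Y = Y \J$, we have $\J Y \J = \phi_Y(\J) Y = \J Y$, and adjointness of $\phi_Y$ together with $\J^2 = \J$ give
\[
  \langle \J Y \J, \J Y \J \rangle = \overline{\mathrm{span}} \{ \langle y_1, \phi_Y(j) y_2 \rangle : j \in \J,\ y_1, y_2 \in Y \}.
\]
Transporting through $Y \cong S \otimes_{\A} R$ and writing $j = \langle r_3, r_4 \rangle \in \langle R, R \rangle$, the expression $\langle s_1, \phi_S(j) s_2 \rangle$ coincides with $\langle r_3 \otimes s_1, r_4 \otimes s_2 \rangle_{R \otimes S}$, which by fullness of $X$ ranges densely over $\A$. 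Non-degeneracy of $\phi_R$ together with $\langle R, R\rangle = \J$ then forces $\langle \J Y \J, \J Y \J \rangle = \J$, establishing fullness of $\J Y \J$ as a $\J$-correspondence.
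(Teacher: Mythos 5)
Your proposal is correct and follows essentially the same route as the paper: the same isometry $\overline{\psi}$, the same use of Lemma~\ref{lem:invariant-full-ideal} to place $\I R$ inside $R\J$, and the same approximate-identity trick to get surjectivity (your $re_\lambda \otimes_\J e_\lambda s$ versus the paper's $r\sqrt{e_\lambda} \otimes_\J \sqrt{e_\lambda} s$). The ``moreover'' clause is also sound; you reorganize it by first pinning down $\langle R, R\rangle = \J$ and computing $\langle \J Y \J, \J Y \J\rangle$ directly on elementary tensors, whereas the paper first deduces fullness of $\J S$ over $\A$ and then sandwiches $\langle \J Y\J, \J Y\J\rangle$ between $\langle Y\J, Y\J\rangle = \J$ and $\J$, but these are the same inner-product-span computations.
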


\begin{proof}
We claim $\I R \J \otimes_{\J} S \I \cong \I R \otimes_{\B} S \I$, where a similar argument would show that $\J S \I \otimes_{\I} \I R \J \cong \J R \otimes_{\A} S \J$. Since $\I R \otimes_{\B} S \I \cong \I X \I$ and $ \J R \otimes_{\A} S \J \cong \J Y \J$, this would conclude the proof.

As we have seen in the discussion preceding the proposition, there is an isometric bimodule map $\overline{\psi} : \I R \J \otimes_{\J} S \I \rightarrow \I R \otimes_{\B} S \I \cong \I X \I$ given by $\overline{\psi}(r\otimes_{\J} s) = r\otimes_{\B} s$. Thus, to finish the proof we must show that $\overline{\psi}$ is surjective.

Take $\{ e_\lambda \}$ to be an approximate identity $\{e_\lambda\}$ of positive elements for $\J$. Then, By Lemma~\ref{lem:invariant-full-ideal} we have that $\J S \subseteq S \I$ and $\I R \subseteq R \J$. Thus, for any $r\in \I R$ we have that
$$
\lim_{\lambda} r e_{\lambda} = r.
$$
Hence,
$$
r\otimes_{\B} s = \lim_{\lambda} r e_{\lambda} \otimes_{\B} s = \lim_{\lambda} r \sqrt{e_{\lambda}} \otimes_{\B} \sqrt{e_{\lambda}}s = \lim_{\lambda} \overline{\psi} (r \sqrt{e_{\lambda}} \otimes_{\J} \sqrt{e_{\lambda}}s).
$$
Therefore, we get that $\overline{\psi}$ is a surjection. Thus, we have just shown that $\overline{\psi} \colon \I R \J \otimes_{\J} \J S \I \to \I R \otimes_{\B} S \I$ is a unitary correspondence isomorphism. This concludes the proof.

Suppose now that $X$ is full, so that $X \cong R\mathcal{J} \otimes_{\mathcal{J}} \mathcal{J}S$. Then,
$$
\langle X, X \rangle = \langle \mathcal{J} S, \phi_R(\langle R \mathcal{J}, R \mathcal{J}) \rangle \mathcal{J} S \rangle \subseteq \langle \mathcal{J} S, \mathcal{J} S \rangle.
$$
Hence, we see that $\J S$ is full. Since $R$ is non-degenerate we get that
$$
\langle \J Y \J, \J Y \J \rangle = \langle R \J, \phi_S(\langle \J S, \J S\rangle) R \J \rangle = \langle R \J, R \J \rangle,
$$
but now since
$$
 \langle Y \J, Y \J \rangle = \langle R \J, \phi_S(\langle S, S \rangle) R \J \rangle \subseteq \langle R \J, R \J \rangle \subseteq \langle \J Y \J, \J Y \J \rangle,
$$
and since $Y \J$ is full by definition, we get that $\J Y \J$ is full as well.
\end{proof}

Note that if in Proposition \ref{prp:to-full} $X$ (or $Y$) was full, then $X\cong \I X \I$ (or $Y \cong \J Y \J$ respectively). Moreover, if $X$ (or $Y$) were regular to begin with, then $\I X\I$ (or $\J Y \J$ respectively) would be regular as well.

\begin{prp} \label{prp:to-regular}
Let $X$ be an injective $\A$-correspondence and let $Y$ be an injective $\B$-correspondence.  Suppose $R$ is an $\A-\B$-correspondence and suppose $S$ is a $\B-\A$-correspondence such that 
$$
X \cong R \otimes_{\B} S \quad \text{and} \quad Y \cong S \otimes_{\A} R.
$$
Let $\I = \phi_X^{-1}(\K(X))$ and $\J = \phi_Y^{-1}(\K(Y))$.  Then  
$$
\I R \J \otimes_{\J} \J S \I \cong \I X \I \quad \text{and} \quad \J S \I \otimes_{\I} \I R \J \cong \J Y \J.$$
\end{prp}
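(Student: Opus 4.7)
The proof closely parallels that of Proposition~\ref{prp:to-full}, with Lemma~\ref{l:invariant-ideal} playing the role that Lemma~\ref{lem:invariant-full-ideal} plays there. As in the general construction for arbitrary ideals $\I \unlhd \A$ and $\J \unlhd \B$ preceding Proposition~\ref{prp:to-full}, the first step is to build the canonical map $\overline{\psi} \colon \I R \J \otimes_\J \J S \I \to \I R \otimes_\B S \I$ defined on elementary tensors by $r \otimes_\J s \mapsto r \otimes_\B s$. This is automatically an isometric bimodule map, and composing with the given isomorphism $X \cong R \otimes_\B S$ identifies its codomain with $\I X \I$. The content of the proposition thus reduces to establishing that $\overline{\psi}$ is surjective, and symmetrically for the analogous map realizing the second isomorphism.

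Since $X$ and $Y$ are injective and, under the given identifications, $\phi_X = \phi_R \otimes 1_S$ and $\phi_Y = \phi_S \otimes 1_R$, it follows that $R$ and $S$ are also injective. Hence Lemma~\ref{l:invariant-ideal} applies and yields the key identities $\I R = R \J'$ and $\J S = S \I'$, where $\I' = \phi_R^{-1}(\K(R))$ and $\J' = \phi_S^{-1}(\K(S))$. My plan for surjectivity follows the approximate-identity argument from the proof of Proposition~\ref{prp:to-full}: given $r \in \I R$ and $s \in S \I$, pick a positive approximate identity $\{e_\lambda\}$ for an appropriate ideal of $\B$ and approximate $r \otimes_\B s$ by the images $\overline{\psi}(r \sqrt{e_\lambda} \otimes_\J \sqrt{e_\lambda} s) = r e_\lambda \otimes_\B s$, provided that simultaneously $r \sqrt{e_\lambda} \in \I R \J$, $\sqrt{e_\lambda} s \in \J S \I$, and $r e_\lambda \to r$ in $\I R$.

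The main obstacle is that the identities available from Lemma~\ref{l:invariant-ideal} relate $\I$ to the auxiliary ideal $\J'$ and $\J$ to $\I'$, rather than giving inclusions $\I R \subseteq R \J$ and $\J S \subseteq S \I$ analogous to the ones used in Proposition~\ref{prp:to-full}. Navigating this asymmetry is the crux of the argument: the approximate identity $\{e_\lambda\}$ must lie in $\J$ (to guarantee the product-membership conditions $r \sqrt{e_\lambda} \in \I R \J$ and $\sqrt{e_\lambda} s \in \J S \I$) while at the same time absorbing $r \in R \J'$ on the right. One plausible route is a two-sided simultaneous approximation, combining the identities $\I R = R \J'$ and $\J S = S \I'$ with the tensor balance over $\J$ in the domain of $\overline{\psi}$ to massage approximations on the $R$-side and the $S$-side into each other. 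Once surjectivity of $\overline{\psi}$ is established, the symmetric isomorphism $\J S \I \otimes_\I \I R \J \cong \J Y \J$ follows by interchanging the roles of $(X, R)$ and $(Y, S)$.
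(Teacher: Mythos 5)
Your setup is correct and matches the paper's: the isometric bimodule map $\overline{\psi}\colon \I R \J \otimes_{\J} \J S \I \to \I R \otimes_{\B} S\I \cong \I X \I$ is constructed exactly as in the discussion preceding Proposition~\ref{prp:to-full}, and everything reduces to surjectivity. But at the point you yourself identify as the crux, the proposal stops short of an argument: ``a two-sided simultaneous approximation \dots to massage approximations on the $R$-side and the $S$-side into each other'' is not a proof, and the gap is real because without a further input there is no reason an approximate identity of $\J$ should act as an approximate unit on vectors of $S\I$ (or absorb $\I R$ on the right).

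The missing idea is a comparison between the ideals $\I,\J$ and the auxiliary ideals $\I'=\phi_R^{-1}(\K(R))$, $\J'=\phi_S^{-1}(\K(S))$. Since $\phi_X$ and $\phi_Y$ are injective and $\ker\phi_R \subseteq \ker\phi_{R\otimes S} = \ker\phi_X$, $\ker\phi_S\subseteq \ker\phi_Y$, the maps $\phi_R$ and $\phi_S$ are injective; then \cite[Lemma~4.2]{FMR-IUMJ2003} (applied to $\phi_X(a)=\phi_R(a)\otimes 1 \in \K(R\otimes_\B S)$, with $\phi_S$ injective) gives $\I\subseteq \I'$, and symmetrically $\J\subseteq \J'$. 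Combining this with the identities $\I R = R\J'$ and $\J S = S\I'$ from Lemma~\ref{l:invariant-ideal} yields
$$
S\I \subseteq S\I' = \J S \quad\text{and}\quad R\J \subseteq R\J' = \I R .
$$
These are precisely the inclusions whose absence you flagged. Now take a positive approximate identity $\{e_\lambda\}$ of $\J$: for $s\in S\I\subseteq \J S$ one has $e_\lambda s \to s$, so for $r\in \I R$ and $s\in S\I$,
$$
r\otimes_{\B} s=\lim_\lambda r\otimes_{\B} e_\lambda s=\lim_\lambda r\sqrt{e_\lambda}\otimes_{\B}\sqrt{e_\lambda}\,s=\lim_\lambda \overline{\psi}\bigl(r\sqrt{e_\lambda}\otimes_{\J}\sqrt{e_\lambda}\,s\bigr),
$$
with $r\sqrt{e_\lambda}\in \I R\J$ and $\sqrt{e_\lambda}\,s\in \J S\I$, proving surjectivity. (Note the approximate identity acts on $s$ from the left here, not on $r$ from the right as in Proposition~\ref{prp:to-full}; that is exactly how the asymmetry you worried about is resolved.) The second isomorphism follows by the symmetric argument, as you indicate.
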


\begin{proof}
We claim $\I R \J \otimes_{\J} S \I \cong \I R \otimes_{\B} S \I$, where a similar argument would show that $\J S \I \otimes_{\I} \I R \J \cong \J R \otimes_{\A} S \J$. Since $\I R \otimes_{\B} S \I \cong \I X \I$ and $ \J R \otimes_{\A} S \J \cong \J Y \J$, this would conclude the proof.

As we have seen in the discussion preceding Proposition \ref{prp:to-full} there is an isometric bimodule map $\overline{\psi} : \I R \J \otimes_{\J} S \I \rightarrow \I R \otimes_{\B} S \I \cong \I X \I$ given by $\overline{\psi}(r\otimes_{\J} s) = r\otimes_{\B} s$. To finish the proof, we must show that $\overline{\psi}$ is a surjection.

Let $r \in \I R$ and $s \in S \I$.  Set $\I' = \phi_R^{-1}(\K(R))$ and $\J' = \phi_S^{-1} (\K(S))$.  Since $\phi_X$ and $\phi_Y$ are injections and since
$$
\ker( \phi_R ) \subseteq \ker( \phi_X) \quad \text{and} \quad \ker(\phi_S) \subseteq \ker(\phi_X),
$$
we get that $\phi_R$ and $\phi_S$ are injections. Therefore, by \cite[Lemma~4.2]{FMR-IUMJ2003}, $\I \subseteq \I'$ and $\J \subseteq \J'$. By Lemma~\ref{l:invariant-ideal}, $\I R = R \J'$ and $\J S = S \I'$.  Therefore, $S \I \subseteq S \I' = \J S$ and $R \J \subseteq R \J' = \I R$.  Take $\{ e_\lambda \}$ to be an approximate identity $\{e_\lambda\}$ of positive elements for $\J$.  Since $S\I \subseteq \J S$,
$$
\lim_\lambda e_\lambda s = s \ \ \text{for} \ \ s\in S \I.
$$
Then
$$
r\otimes_{\B} s = \lim_{\lambda } r \otimes_{\B} e_\lambda s = \lim_{ \lambda } r \sqrt{e_{\lambda}} \otimes_{\B} \sqrt{e_{\lambda }}s = \lim_{ \lambda }\overline{\psi} ( r \sqrt{e_\lambda} \otimes_{\J}  \sqrt{e_\lambda} s ).
$$
Therefore, $\overline{\psi}$ is a surjection. Thus, we have just shown that $\overline{\psi} \colon \I R \J \otimes_{\J} \J S \I \to \I R \otimes_{\B} S \I$ is a unitary correspondence isomorphism. This concludes the proof.
\end{proof}

\begin{thm} \label{thm:reg-reduction}
Let $X$ be an $\A$-correspondence and $Y$ be a $\B$-correspondence. Suppose that both $X$ and $Y$ are regular (or both regular and full) and that there exist C*-correspondences $R_1,...,R_m$ and $S_1,...,S_m$ which implement a strong shift equivalence between $X$ and $Y$ with lag $m$. Then there exist C*-correspondences $R_1',..., R_m'$ and $S_1',...,S_m'$, such that the pairs $R_i' \otimes S_i'$ and $S_{i+1}' \otimes R_{i+1}'$ are regular (or both regular and full, respectively), which implement a strong shift equivalence between $X$ and $Y$ with lag $m$.
\end{thm}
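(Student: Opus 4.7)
The plan is to iteratively apply Propositions~\ref{prp:to-injective}, \ref{prp:to-regular}, and (in the full case) \ref{prp:to-full} to the SSE chain, at each stage imposing one additional structural property on the intermediate C*-correspondences while leaving the regular (or regular and full) endpoints $X$ and $Y$ unchanged. Write the chain as $X = X_0, X_1, \ldots, X_{m-1}, X_m = Y$, where $X_{i-1} \cong R_i \otimes S_i$ and $X_i \cong S_i \otimes R_i$, so each pair $(X_{i-1}, X_i)$ is an elementary shift relation implemented by $(R_i, S_i)$; the goal is to modify the $R_i, S_i$ (and hence the intermediate $X_i$'s) so that every $X_i$ is regular (or regular and full).

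First I would apply Proposition~\ref{prp:to-injective} to each pair $(X_{i-1}, X_i)$. This replaces $X_i$ by $X_i / \I_{X_i}$, where $\I_{X_i}$ is the smallest fully $X_i$-invariant ideal. Crucially, $\I_{X_i}$ is an intrinsic invariant of $X_i$ and not of the pair in which it sits, so the quotient of $X_i$ obtained from the pair $(X_{i-1}, X_i)$ agrees with the one obtained from $(X_i, X_{i+1})$, and the resulting elementary shift relations assemble into a coherent new SSE chain of the same lag $m$. The endpoints remain unchanged since $\I_X = \I_Y = 0$ whenever $X, Y$ are injective, which is guaranteed by regularity. After this step, all intermediate $X_i$ are injective C*-correspondences.

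Next I would apply Proposition~\ref{prp:to-regular} to each pair, using the (now well-defined) ideals $\I_i := \phi_{X_i}^{-1}(\K(X_i))$, and replacing each intermediate $X_i$ by the $\I_i$-correspondence $\I_i X_i \I_i$. Once again $\I_i$ depends only on $X_i$, so the construction is consistent across adjacent pairs and produces a new coherent SSE chain. Properness of $X = X_0$ and $Y = X_m$ gives $\I_0 = \A$ and $\I_m = \B$, which leaves the endpoints unchanged. The resulting intermediaries are proper on their new base algebras by construction, and I would verify that injectivity from the previous step is preserved under the cut-down by an approximate-identity argument inside $\I_i$, yielding regularity. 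For the "regular and full" case I would additionally apply Proposition~\ref{prp:to-full} using the ideals $\J_i := \langle X_i, X_i \rangle$, using the moreover clause of that proposition together with fullness of $X$ and $Y$ to propagate fullness inward from both endpoints.

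The technical heart of the argument, and the main obstacle, is checking that each successive cut-down preserves the properties established in previous steps: specifically, that the Katsura-style cut in Step 2 preserves the injectivity established in Step 1, and that in the full case the final cut by $\J_i$ preserves both injectivity and properness. These rely on approximate-identity computations showing that restrictions of compact operators on $X_i$ to the sub-correspondences $\I_i X_i \I_i$ (and later $\J_i X_i \J_i$) remain compact, and that vanishing of the left action on these sub-correspondences forces vanishing on all of $X_i$, using non-degeneracy and the bimodule structure.
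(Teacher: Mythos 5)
Your overall architecture matches the paper's: first injectivize every intermediary via Proposition~\ref{prp:to-injective}, using that $\I_{X_i}$ is intrinsic to $X_i$ so that adjacent elementary steps glue into a chain of the same lag and the regular endpoints are untouched; then cut down via Proposition~\ref{prp:to-regular} with the intrinsic ideals $\phi_{X_i}^{-1}(\K(X_i))$; and in the full case cut down further via Proposition~\ref{prp:to-full}. All of that is exactly what the paper does.

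The gap is in the step you yourself flag as the technical heart: the claim that the cut-down $X_i \mapsto \I_i X_i \I_i$ preserves the injectivity obtained in the first step. This is false in general, and no approximate-identity argument will rescue it, because a nonzero $a \in \I_i$ may act on $X_i$ entirely through the part of the module that the cut-down discards. Concretely, take the graph correspondence of the graph on vertices $u,v,w$ with one edge from $v$ to $u$, infinitely many edges from $w$ to $v$, and a loop at $w$ (in the paper's convention, where the left action of $\delta_{v_0}$ is the projection onto edges with range $v_0$). Then $X$ is injective, $\I=\phi_X^{-1}(\K(X))=\mathrm{span}\{\delta_u,\delta_w\}$ since $v$ is the unique infinite receiver, but the only edge on which $\delta_u$ acts nontrivially has source $v$ and is therefore absent from $\I X\I$; hence $\delta_u$ acts as zero on the cut-down and injectivity is lost. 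The paper does not attempt to push injectivity through the cut-down: it applies Proposition~\ref{prp:to-regular} only to obtain \emph{proper} intermediaries and then \emph{re-applies} Proposition~\ref{prp:to-injective}. What makes this alternation terminate is an asymmetry you have reversed: the injectivization quotient preserves properness (the last clause of Lemma~\ref{lem:quot-correspondence}) and fullness, whereas the cut-down need not preserve injectivity. So the correct order is: injectivize, cut to proper, re-injectivize; and in the full case, cut by $\langle X_i,X_i\rangle$ (which keeps properness) and re-injectivize once more. With that reordering your argument goes through; note also that in Proposition~\ref{prp:to-full} fullness propagates by induction from $X$ along the chain rather than ``inward from both endpoints.''
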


\begin{proof}
By assumption, we have that
$$
X \cong R_1 \otimes S_1, \ \ S_m \otimes R_m \cong Y
$$
and $X_i= S_i \otimes R_i \cong R_{i+1} \otimes S_{i+1}$ for all $i=1,...,m-1$.

By Proposition \ref{prp:to-injective}, $X = X_{\I_X}$ is elementary strong shift related to the injective correspondence $X'_1 = (X_1)_{\I_{X_1}}$ via $R_1'=(R_1)_{\I_{X_1}}$ and $S_1' = (S_1)_{\I_{X}}$.
Analogously, if we define $X_{i}'= (X_{i})_{\I_{X_i}}$, $R_i':=(R_i)_{\I_{X_i}}$ and $S_i':= (S_i)_{\I_{X_{i+1}}}$, we get that $X_i'$ is elementary strong shift related to $X_{i+1}'$ for $i=1,\dots, m-2$ via $R_i',S_i'$ and $X_{m-1}'$ is elementary strong shift related to $Y=Y_{\I_Y}$ via $R_m'$, $S_m'$.
Since $X$, $Y$, and $X_i'$ are injective, $R_i'$ and $S_i'$ are also injective.
Therefore, we constructed a strong shift equivalence between $X$ and $Y$ implemented by injective C*-correspondences.

Hence, we may assume without loss of generality that $R_1,...,R_m$ and $S_1,...,S_m$ in the statement of the theorem are injective to begin with. In this case we apply Proposition \ref{prp:to-regular} to get new C*-correspondences $R_1',...R_m'$ and $S_1',...,S_m'$ such that the pairs $R_i' \otimes S_i' \cong S_{i+1}' \otimes R_{i+1}'$ are \emph{proper} for $i=1,...,m-1$. But now, since by assumption $X$ and $Y$ are proper, we get that $X \cong R_1'\otimes S_1'$ and $S_m' \otimes R_m' \cong Y$. We can now repeat the process of injectivization using Proposition \ref{prp:to-injective} to obtain C*-correspondences $R_1'',...R_m''$ and $S_1'',...,S_m''$ so that by Lemma \ref{lem:quot-correspondence} the C*-correspondences $R_i'' \otimes S_i''\cong S_{i+1}'' \otimes R_{i+1}''$ are still proper, and injective by design.

Finally, if $X$ and $Y$ are both regular and full to begin with, and $R_1'',...R_m''$ and $S_1'',...,S_m''$ are the C*-correspondences from the previous paragraph, we may apply Proposition \ref{prp:to-full} to obtain new C*-correspondences $R_1''',...R_m'''$ and $S_1''',...,S_m'''$ so that $R_i''' \otimes S_i''' \cong S_{i+1}''' \otimes R_{i+1}'''$ are still proper. By induction, that $X$ and $Y$ are full and the last part of Proposition \ref{prp:to-full}, we see that $R_i'''' \otimes S_i''' \cong S_{i+1}''' \otimes R_{i+1}'''$ are also full. Finally, we can now repeat once more the process of injectivization using Proposition \ref{prp:to-injective} to obtain C*-correspondences that are still proper and full, and are now also injective by design. Thus, we obtained a strong shift equivalence implemented by \emph{regular} and \emph{full} intermediary C*-correspondences.
\end{proof}

In the work of Kakariadis and Katsoulis \cite{KK-JFA2014} a critical error in the proof of \cite[Theorem 5.9]{KK-JFA2014} (see specifically \cite[Theorem 5.8]{KK-JFA2014}) regarding the preservation of SE in the passage to Pimsner dilations was found (see \cite{CDE} and the corrigendum for \cite{KK-JFA2014}). Unfortunately, the proof of \cite[Theorems 5.9]{KK-JFA2014} contains an additional gap (which appears in the proofs of \cite[Theorem 5.5 \& 7.1]{KK-JFA2014}, as well as in the erratum to \cite{KK-JFA2014}), where it is claimed that if two regular (or both regular and full) C*-correspondences are SSE, then so are their Pimsner dilations. The issue is that the use of \cite[Theorem 5.3]{KK-JFA2014} in the proof of \cite[Theorem 5.5]{KK-JFA2014} only applies when the intermediary C*-correspondences implementing a strong shift equivalence are regular. 

Similar gaps have unfortunately appeared in the proofs of \cite[Theorems~6.3 \& 6.7]{Ery22} and \cite[Theorem~3.5]{BMA} (see the first line of each proof), where it was independently claimed that if two regular C*-correspondences $X$ and $Y$ are strong shift equivalent then the corresponding Cuntz-Pimsner algebras $\mathcal{O}_X$ and $\mathcal{O}_Y$ are equivariantly Morita equivalent in the sense of Combes \cite{Combes94}. Theorem~\ref{thm:reg-reduction} closes these gaps appearing in the proofs of these various aforementioned results.

\begin{rmk} \label{rmk:equiv-stab-iso}
Under the assumption of $\sigma$-unitality of the coefficient C*-algebras, as well as regularity and fullness of the underlying C*-correspondences, by using Theorem~\ref{thm:reg-reduction} and the bicategorical techniques developed in \cite[Sections 4 and 5]{BDR+}, one can prove a result that strengthens both \cite[Theorem 7.1]{KK-JFA2014}, as well as \cite[Theorems~6.3 \& 6.7]{Ery22} and \cite[Theorem~3.5]{BMA}. This also provides the first complete proof which answers a question posed by Muhly, Pask and Tomforde in \cite[Remark 5.5]{MPT08} (see \cite[Conjecture 1]{KK-JFA2014} and \cite[Theorem 3.17]{CDE}).

More precisely, assume that $X$ and $Y$ are regular and full C*-correspondences over $\sigma$-unital coefficient C*-algebras $\A$ and $\B$. We can now show that if $X$ and $Y$ are SSE, then their Cuntz-Pimsner C*-algebras $\mathcal{O}_X$ and $\mathcal{O}_Y$ are stably equivariantly $*$-isomorphic (which by \cite[Theorem 3.17]{CDE} is equivalent to strong Morita equivalence of the Pimsner dilations $X_{\infty}$ and $Y_{\infty}$).

Indeed, by Theorem~\ref{thm:reg-reduction} we may assume that the strong shift equivalence of $X$ and $Y$ is implemented through regular and full intermediary C*-correspondences. Thus, without loss of generality $X$ and $Y$ are elementary strong shift related. It is then straightforward to show that in this case $X$ and $Y$ are aligned shift equivalent in the sense of \cite[Definition 3.1]{BDR+}. Hence, we may apply \cite[Theorem 4.5]{BDR+} and proceed as in the proof of \cite[Theorem 6.3]{BDR+} to conclude.
\end{rmk}

\section{SSE of graph correspondences implies SSE of matrices}\label{sec-modSSE-implies-SSE}

In this section we determine all C*-correspondences over C*-subalgebras of compact operators, and use this to show that SSE of graph C*-correspondences implies SSE of their underlying adjacency matrices. Let $V$ and $W$ be sets, and denote $\A = \oplus_{v \in V} \bbK(\H_v)$ and $\B = \oplus_{w\in W} \bbK(\H_w)$. It is well-known that this is the general form of any C*-subalgebra of compact operators on Hilbert space.

Suppose $\H$ is a Hilbert space. Then $\H$ is naturally a right Hilbert $\bbK(\H)$-module with inner product given by $[ \xi,\eta ] = \theta_{\xi,\eta}$ and right action defined by $\nu \cdot \theta_{\xi , \eta} = \theta_{\eta, \xi} (\nu)$.  This right Hilbert $\bbK(\H)$-module is sometimes called the \emph{standard} $\bbK(\H)$-module.  Since we use the same notation $\H$, to denote the Hilbert space and the right Hilbert $\bbK(\H)$-module, we have two notions of ``compact operators'' on $\H$.  When we write $\bbK(\H)$, we are considering $\H$ as a Hilbert space and $\bbK(\H)$ is the set of compact operators on the Hilbert space $\H$ in the usual sense.  When we write $\K(\H)$, we are considering $\H$ as the standard right $\bbK(\H)$-module and $\K(\H)$ is the set of compact operators on the standard $\bbK(\H)$-module $\H$.

It turns out that $\H$ becomes as imprimitivity $\mathbb{C}-\bbK(\H)$-bimodule since $\K(\H) \cong \mathbb{C}$.

\begin{prp}[Standard $\mathbb{C}-\bbK(\H)$-correspondence] \label{prp-hilbertspace-corr}
Let $\H$ be a Hilbert space with its right $\mathbb{K}(\H)$ module structure. Then $\mathbb{C} \cong \K(\H)$.
\end{prp}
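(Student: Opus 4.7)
The plan is to unwind the definitions directly: compute the rank-one compact operator $\Theta_{\xi,\eta}$ on the right Hilbert $\bbK(\H)$-module $\H$ and show that it is simply a scalar multiple of the identity. Because $\K(\H)$ is by definition the closed linear span of such operators, this will immediately give $\K(\H) = \bbC \cdot \id_\H \cong \bbC$.

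The key computation is the following. Fix $\xi,\eta,\zeta \in \H$. By definition of the generalized compact operator $\Theta_{\xi,\eta} \in \K(\H)$ in terms of the $\bbK(\H)$-valued inner product $[\cdot,\cdot]$, we have
\[
\Theta_{\xi,\eta}(\zeta) \;=\; \xi \cdot [\eta,\zeta] \;=\; \xi \cdot \theta_{\eta,\zeta}.
\]
Now apply the definition of the right $\bbK(\H)$-action $\nu \cdot \theta_{\xi',\eta'} = \theta_{\eta',\xi'}(\nu)$ with $\nu=\xi$, $\xi'=\eta$, $\eta'=\zeta$, which gives $\xi \cdot \theta_{\eta,\zeta} = \theta_{\zeta,\eta}(\xi)$. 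Since $\theta_{\zeta,\eta}$ is now being read as the ordinary rank-one operator on the Hilbert space $\H$, we get
\[
\theta_{\zeta,\eta}(\xi) \;=\; \zeta \, \langle \eta,\xi\rangle_{\H} \;=\; \langle \eta,\xi\rangle_{\H}\,\zeta.
\]
Combining these shows $\Theta_{\xi,\eta}(\zeta) = \langle\eta,\xi\rangle_\H\,\zeta$, i.e.\ $\Theta_{\xi,\eta} = \langle\eta,\xi\rangle_\H \cdot \id_\H$.

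To conclude, note that scalar multiplication by any $\lambda \in \bbC$ is an adjointable right $\bbK(\H)$-module map on $\H$, so $\bbC\cdot \id_\H \subseteq \L(\H)$. The computation above shows that every rank-one generator $\Theta_{\xi,\eta}$ lies in $\bbC \cdot \id_\H$, and conversely (assuming $\H \neq 0$) as $\xi,\eta$ range over $\H$ the scalars $\langle\eta,\xi\rangle_\H$ fill out all of $\bbC$. Taking closed linear spans,
\[
\K(\H) \;=\; \overline{\mathrm{span}}\{\Theta_{\xi,\eta} : \xi,\eta \in \H\} \;=\; \bbC\cdot \id_\H \;\cong\; \bbC,
\]
which is the claimed isomorphism.

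The only possible obstacle is bookkeeping of conventions: the symbol $\theta_{\cdot,\cdot}$ plays a double role here (once as an element of $\bbK(\H)$ regarded as the $\bbK(\H)$-valued inner product, and once as an ordinary rank-one operator on the Hilbert space $\H$ when acting on a vector), and the right action of $\bbK(\H)$ on $\H$ is given by a swapped formula. Once these are kept straight, the proof is a one-line calculation.
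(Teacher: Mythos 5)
Your proof is correct and follows essentially the same route as the paper's: the identical one-line computation $\Theta_{\xi,\eta}(\zeta)=\xi\cdot[\eta,\zeta]=\theta_{\zeta,\eta}(\xi)=\langle\eta,\xi\rangle_{\H}\,\zeta$, followed by taking closed linear spans of the rank-one operators. You are merely a bit more explicit about the convention bookkeeping and the (harmless, tacit in the paper) assumption $\H\neq 0$.
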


\begin{proof}
Let $\xi_1, \xi_2 \in \H$.  Then $\theta_{\xi_1, \xi_2}(\eta) = \xi_1 \cdot [ \xi_2, \eta ] = \xi_1 \cdot \theta_{\xi_2, \eta} = \theta_{\eta, \xi_2} (\xi_1) = \eta \langle \xi_2, \xi_1 \rangle$.  Therefore, since $\langle \xi_2, \xi_1 \rangle \in \mathbb{C}$ we get that $\K(\H) = \mathbb{C} \cdot \mathrm{Id}$, so that $\mathbb{C} \cong \mathcal{K}(\H)$.
\end{proof}

Let $\H$ and $\H'$ be Hilbert spaces.  Note that $\H'$ is naturally a $\bbK(\H')-\mathbb{C}$-correspondence and by Proposition~\ref{prp-hilbertspace-corr}, $\H$ is a $\mathbb{C}-\bbK(\H)$-correspondence.  Thus we may consider the $\bbK(\H')-\bbK(\H)$-correspondence given by the interior tensor product $\H' \otimes_\mathbb{C} \H$.  Since $\mathbb{C} \cong \K(\H)$, by \cite[Proposition~4.7]{Lan95}, the left $\bbK(\H')$ action on $\H' \otimes_\mathbb{C} \H$ acts faithfully as compact operators on $\H' \otimes_\mathbb{C} \H$, and since 
\begin{align*}
\theta_{\xi_1 \otimes \eta_1, \xi_2 \otimes \eta_2 } ( \xi \otimes \eta) &= ( \xi_1 \otimes \eta_1) \cdot ( \langle \xi_2, \xi \rangle \theta_{\eta_2, \eta}) = \xi_1 \langle \xi_2, \xi \rangle \otimes \theta_{\eta, \eta_2} ( \eta_1) \\
	&= \xi_1 \langle \xi_2, \xi \rangle \otimes \eta \langle \eta_2, \eta_1 \rangle = \xi_1 \langle \xi_2, \xi \rangle \langle \eta_2, \eta_1 \rangle \otimes \eta \\
	&= \theta_{ \xi_1 \langle \eta_2, \eta_1 \rangle, \xi_2 } \cdot ( \xi \otimes \eta),
\end{align*}
the left $\bbK(\H')$ action gives an isomorphism from $\bbK(\H')$ to $\K(\H' \otimes_\mathbb{C} \H)$.  By \cite[Proposition~3.8]{RW-Morita-Eq}, $\H' \otimes_\mathbb{C} \H$ is a $\bbK(\H')-\bbK(\H)$ imprimitivity bimodule with left inner product given by 
\[
[ \xi_1 \otimes \eta_1, \xi_2 \otimes \eta_2 ]  = \theta_{ \xi_1 \langle \eta_2, \eta_1 \rangle, \xi_2 }.
\]
We will call $\H' \otimes_\mathbb{C} \H$ the \emph{standard} $\bbK(\H') - \bbK(\H)$-bimodule.

\begin{prp}\label{pr:imprimitive-bimodule}
Let $\H$ and $\H'$ be Hilbert spaces.  If $X$ is an imprimitivity $\bbK(\H')-\bbK(\H)$-bimodule, then $X \cong \bbK(\H, \H')$.  In particular, $X \cong \H' \otimes_\mathbb{C} \H$.
\end{prp}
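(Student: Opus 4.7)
The plan is to reduce the problem to a rank-one ``corner'' of $X$, identify that corner with $\H'$ using the uniqueness of the irreducible representation of $\bbK(\H')$, and then reconstruct $X$ from the corner.

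Fix a unit vector $e\in\H$ and let $p = \theta_{e,e}\in\bbK(\H)$. Consider $Y := Xp\subseteq X$. Since $p$ is full in $\bbK(\H)$ (that is, $\bbK(\H)\,p\,\bbK(\H) = \bbK(\H)$) and $X\bbK(\H) = X$ by non-degeneracy, one has $X = \overline{Y\cdot \bbK(\H)}$, and in particular $Y\neq 0$. The right $\bbK(\H)$-valued inner product on $X$ restricts to $Y$ with values in $p\bbK(\H)p = \mathbb{C}\theta_{e,e}\cong \mathbb{C}$, making $Y$ a Hilbert space, and the left $\bbK(\H')$-action on $X$ restricts to $Y$. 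I would then verify that $Y$ is a $\bbK(\H')$--$\mathbb{C}$ imprimitivity bimodule: the imprimitivity compatibility $[y_1,y_2]\cdot y_3 = y_1\cdot \langle y_2,y_3\rangle$ and non-degeneracy restrict directly from $X$; fullness of the $\mathbb{C}$-valued inner product is automatic since $Y\neq 0$; and fullness of the $\bbK(\H')$-valued inner product follows from simplicity of $\bbK(\H')$, as the closed span of the left inner products of elements of $Y$ is a nonzero closed two-sided ideal of $\bbK(\H')$.

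Once this is established, the left-action isomorphism $\bbK(\H')\cong \K(Y) = \bbK(Y)$ is an isomorphism of $C^*$-algebras between compact operators on two Hilbert spaces, and is therefore spatially implemented by a unitary $U\colon \H'\to Y$ that automatically intertwines both the left $\bbK(\H')$-actions and (via the imprimitivity compatibility) the $\mathbb{C}$-valued inner products. Hence $Y\cong \H'$ in the category of $\bbK(\H')$--$\mathbb{C}$ imprimitivity bimodules.

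Finally, I would reconstruct $X$ from $Y$ via the map $\Phi\colon Y\otimes_{\mathbb{C}}\H\to X$ given by $\Phi(y\otimes \xi) = y\cdot \theta_{e,\xi}$. The balancing over $\mathbb{C}$ holds because both $\mathbb{C}$-actions are scalar multiplication. Isometry is a direct computation: writing $\langle y_1,y_2\rangle_X = \langle y_1,y_2\rangle_\mathbb{C}\,\theta_{e,e}$ and applying the identity $\theta_{\xi_1,e}\,\theta_{e,e}\,\theta_{e,\xi_2} = \theta_{\xi_1,\xi_2}$, both $\langle \Phi(y_1\otimes \xi_1),\Phi(y_2\otimes \xi_2)\rangle$ and the tensor-product inner product reduce to the same scalar multiple of $\theta_{\xi_1,\xi_2}$. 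Surjectivity follows from $X = \overline{Y\cdot \bbK(\H)}$ together with $p\bbK(\H) = \overline{\mathrm{span}}\{\theta_{e,\xi}\colon \xi\in\H\}$. Composing with $Y\cong \H'$ gives $X\cong \H'\otimes_{\mathbb{C}}\H$, and the standard map $\eta'\otimes \xi\mapsto \lvert\eta'\rangle\langle \xi\rvert$ yields $\H'\otimes_{\mathbb{C}}\H\cong \bbK(\H,\H')$. The main obstacle is the book-keeping of three different inner-product conventions in the isometry check -- the $\bbK(\H)$-valued inner product on $X$, the induced $\mathbb{C}$-valued one on $Y$, and the $\bbK(\H)$-valued one on the balanced tensor product $Y\otimes_{\mathbb{C}}\H$, all with the paper's convention $[\xi,\eta] = \theta_{\xi,\eta}$ for the standard module; once these are aligned, the calculation is routine.
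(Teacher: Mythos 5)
Your proof is correct, but it takes a genuinely different route from the paper's. The paper disposes of this proposition in three lines by citing Schweizer's structure theorem (\cite[Lemma~3]{Schweizer-PAMS1999}), which directly produces a Hilbert space $\H''$ with $\bbK(\H)\cong\bbK(\H'')$ and $X\cong\bbK(\H'',\H')$, and then transports along the spatially implemented isomorphism $\bbK(\H)\cong\bbK(\H'')$. You instead give a self-contained argument: compress by the minimal projection $p=\theta_{e,e}$ to get a Hilbert space corner $Y=Xp$, identify $Y\cong\H'$ via the uniqueness (up to unitary equivalence) of the irreducible representation of $\bbK(\H')$, and rebuild $X$ as $Y\otimes_{\mathbb C}\H$ through $y\otimes\xi\mapsto y\cdot\theta_{e,\xi}$, using fullness of $p$ in the simple algebra $\bbK(\H)$ for surjectivity. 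All the steps check out: $p\bbK(\H)p=\mathbb{C}p$ makes $Y$ a Hilbert space, the closed span of the left inner products on $Y$ is a nonzero self-adjoint ideal and hence all of $\bbK(\H')$, the left action of an imprimitivity bimodule is an isomorphism onto $\K(Y)=\bbK(Y)$, and the isometry computation reduces to $\theta_{\xi_1,e}\theta_{e,e}\theta_{e,\xi_2}=\theta_{\xi_1,\xi_2}$ for $\|e\|=1$. In effect you have reproved the special case of Schweizer's lemma that is needed, by the same minimal-projection compression that the paper itself borrows from Baki\'c--Gulja\v{s} in the proof of Proposition~\ref{pr:stnd-cor-decomp}; the paper's version is shorter by citation, while yours is more elementary, makes the implementing unitary explicit, and avoids an external reference at the cost of the inner-product bookkeeping you flag at the end.
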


\begin{proof}
By \cite[Lemma~3]{Schweizer-PAMS1999}, there exists a Hilbert space $\H''$ such that $\bbK(\H) \cong \bbK(\H'')$ and $X \cong \K( \H'', \H')$.  Since $\bbK(\H) \cong \bbK(\H'')$, by basic representation theory of compact operators, there exists a unitary $U \colon \H \to \H''$ implementing the isomorphism $\bbK(\H) \cong \bbK(\H'')$.  The unitary $U$ induces an unitary isomorphism between $\bbK(\H, \H')$ and $\bbK(\H'', \H')$.  Thus, $X \cong \bbK(\H, \H')$.
\end{proof}

\begin{prp} \label{pr:stnd-cor-decomp}
Let $X$ be a (non-degenerate) $\bbK(\H_v)-\bbK(\H_w)$ correspondence. Then there is some cardinality $\mathsf{C}_{vw}$ such that $X$ and $[\H_v\otimes_\mathbb{C} \H_w]^{ \oplus \mathsf{C}_{vw}}$ are unitarily isomorphic $\bbK(\H_v)-\bbK(\H_w)$ correspondences.
\end{prp}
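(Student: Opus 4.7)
The plan is to reduce the proof to the classification of right Hilbert $\bbK(\H_w)$-modules by compressing $X$ with a rank-one projection from the left coefficient algebra. First, since $\bbK(\H_v)$ is simple and $X$ is non-degenerate, the left action $\phi_X \colon \bbK(\H_v) \to \L(X)$ is injective. Fix a unit vector $e \in \H_v$ and let $p = \theta_{e,e} \in \bbK(\H_v)$. Then $pX := \phi_X(p) X$ is a right Hilbert $\bbK(\H_w)$-submodule of $X$, and I propose to show that
\[
  X \cong \H_v \otimes_{\bbC} pX
\]
as $\bbK(\H_v)-\bbK(\H_w)$ correspondences.

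To establish this isomorphism, I would define $\Psi \colon \H_v \otimes_{\bbC} pX \to X$ on elementary tensors by $\Psi(\xi \otimes x) = \theta_{\xi,e} \cdot x$. Well-definedness on the balanced tensor product is immediate, and $\Psi$ manifestly intertwines the right $\bbK(\H_w)$-action and the left $\bbK(\H_v)$-action, the latter because $a\theta_{\xi,e} = \theta_{a\xi,e}$ for $a \in \bbK(\H_v)$. The key computation is that $\theta_{e,\xi_1}\theta_{\xi_2,e} = \langle \xi_1,\xi_2 \rangle p$, combined with $p x_i = x_i$ for $x_i \in pX$, which yields
\[
  \langle \Psi(\xi_1 \otimes x_1), \Psi(\xi_2 \otimes x_2) \rangle = \langle \xi_1,\xi_2\rangle\, \langle x_1,x_2\rangle,
\]
matching the inner product on the interior tensor product. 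Thus $\Psi$ is isometric. For surjectivity, non-degeneracy gives $\overline{\bbK(\H_v) \cdot X} = X$, and the identity $\theta_{\xi,\eta} y = \theta_{\xi,e}\,(\theta_{e,\eta} y)$, together with $\theta_{e,\eta} y = p\,\theta_{e,\eta} y \in pX$, shows that every $\theta_{\xi,\eta} y$ lies in the image of $\Psi$; since these elements span a dense subspace of $X$ and $\Psi$ has closed range, $\Psi$ is unitary.

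Finally, I would invoke the structure theorem for right Hilbert $\bbK(\H_w)$-modules, which (either via the Morita equivalence between $\bbC$ and $\bbK(\H_w)$, or directly by choosing an orthonormal basis of the Hilbert space $pX \cdot q$ for a rank-one projection $q \in \bbK(\H_w)$) gives a unitary $\bbK(\H_w)$-module isomorphism $pX \cong \H_w^{\oplus \mathsf{C}_{vw}}$ for a unique cardinal $\mathsf{C}_{vw}$. Combining this with the isomorphism $\Psi$ and the fact that interior tensor product distributes over direct sums yields
\[
  X \cong \H_v \otimes_{\bbC} pX \cong \H_v \otimes_{\bbC} \H_w^{\oplus \mathsf{C}_{vw}} \cong [\H_v \otimes_{\bbC} \H_w]^{\oplus \mathsf{C}_{vw}},
\]
as desired. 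The main technical point to be careful about is tracking the various left/right module structures when passing through $pX$, in particular verifying that the $\bbK(\H_v)$-action transported through $\Psi$ is indeed the action on the $\H_v$-factor; everything else reduces to routine computations with rank-one operators and the standard classification of Hilbert modules over elementary C*-algebras.
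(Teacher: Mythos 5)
Your proof is correct, and it takes a genuinely different route from the paper's. The paper decomposes $X$ into irreducible sub-bimodules: it invokes Magajna's theorem that every closed submodule of a Hilbert module over a C*-algebra of compact operators is complemented, runs a transfinite induction to write $X \cong \bigoplus_{\alpha < \mathsf{C}_{vw}} X_\alpha$ with each $X_\alpha$ irreducible, shows each $X_\alpha$ is an imprimitivity bimodule by cutting with a minimal projection of $\bbK(\H_w)$ on the \emph{right} and using irreducibility of the resulting representation of $\bbK(\H_v)$, and then applies Proposition~\ref{pr:imprimitive-bimodule} to identify $X_\alpha \cong \H_v \otimes_{\mathbb{C}} \H_w$. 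You instead cut with a minimal projection $p = \theta_{e,e}$ of $\bbK(\H_v)$ on the \emph{left}, exhibit the explicit unitary $X \cong \H_v \otimes_{\mathbb{C}} pX$, and then classify $pX$ as a right Hilbert $\bbK(\H_w)$-module. Your rank-one identities are correct ($\theta_{e,\xi_1}\theta_{\xi_2,e} = \langle \xi_1,\xi_2\rangle p$ and $\theta_{\xi,\eta} = \theta_{\xi,e}\theta_{e,\eta}$ for $\|e\|=1$), and surjectivity of $\Psi$ does follow from non-degeneracy together with the fact that an isometry has closed range. What your approach buys is a more explicit argument with no transfinite induction and no irreducibility or imprimitivity analysis; the cost is that all of the structure theory is concentrated in the single fact that every right Hilbert $\bbK(\H_w)$-module is a multiple of the standard module $\H_w$, which is the content of the Baki\'c--Gulja\v{s} results the paper already cites, and which, as you note parenthetically, can also be proved directly by cutting $pX$ once more with a minimal projection $q \in \bbK(\H_w)$ and taking an orthonormal basis $\{x_\alpha\}$ of the Hilbert space $pXq$ (one then checks, using simplicity of $\bbK(\H_w)$, that the mutually orthogonal submodules $\overline{x_\alpha \bbK(\H_w)} \cong \H_w$ span $pX$, so no complementation theorem is needed there either). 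Two small remarks: the injectivity of $\phi_X$ asserted at the outset is never actually used and fails in the trivial case $X = 0$, which is covered by taking $\mathsf{C}_{vw} = 0$; and your construction also yields the uniqueness of $\mathsf{C}_{vw}$ needed later in the paper, since $\mathsf{C}_{vw} = \dim pXq$.
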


\begin{proof}
Suppose that $X$ is a $\bbK(\H_v)-\bbK(\H_w)$ correspondence. By \cite[Theorem 1]{Mag97} every $\bbK(\H_v)-\bbK(\H_w)$ submodule of $X$ is complemented. Thus, by generating a sub-bimodule from a non-zero vector, up to unitary isomorphism we may decompose by transfinite induction $X \cong \oplus_{\alpha} X_{\alpha}$ with $0 \leq \alpha < \mathsf{C}_{vw}$ for some cardinality $\mathsf{C}_{vw}$, so that each $X_{\alpha}$ is an irreducible $\bbK(\H_v)-\bbK(\H_w)$ correspondence. We denote by $\phi_{X,\alpha}$ the restriction of the left action $\phi_X$ of $X$ to the $\alpha$-th reducing $\bbK(\H_v)-\bbK(\H_w)$-submodule $X_{\alpha}$ for $0\leq \alpha < \mathsf{C}_{vw}$.

Let $e$ be a minimal projection in $\bbK(\H_w)$.  By \cite[Theorem~5 and 6]{BG02}, the subspace $X_{\alpha,e}:= X_{\alpha}\cdot e$ naturally forms a Hilbert space, and we may identify $\L(X_{\alpha})$ and $\K(X_\alpha)$ up to $*$-isomorphisms with $\bbB(X_{\alpha,e})$ and $\bbK(X_{\alpha, e})$  respectively by the restriction map $T \mapsto T|_{X_{\alpha,e}}$.  Since $X_\alpha$ is an irreducible $\bbK(\H_v)-\bbK(\H_w)$ correspondence, the representation $\Psi$ of $\bbK(\H_v)$ on the Hilbert space $X_{\alpha, e}$ given by $\phi_{X, \alpha} (a) \vert_{X_{\alpha, e} }$ is irreducible.  By basic representation theory of the compact operators, $\Psi(\bbK(\H_v)) = \bbK(X_{\alpha, e} )$ which implies $\phi_{X, \alpha} ( \bbK(\H_v)) = \K(X_{\alpha})$ by the above.  Thus, $\phi_{X, \alpha } \colon \bbK(\H_v) \to \K(X_{\alpha})$ is an isomorphism, and $X_\alpha$ is a $\bbK(\H_v)-\bbK(\H_w)$ imprimitivity bimodule.  By Proposition~\ref{pr:imprimitive-bimodule}, $X_\alpha \cong \H_v \otimes_\mathbb{C} \H_w$.  Consequently, $X$ and $[\H_v\otimes_\mathbb{C} \H_w]^{(\mathsf{C}_{vw})}$ are unitarily isomorphic $\bbK(\H_v)-\bbK(\H_w)$ correspondences.
\end{proof}

Recall that $\A= \oplus_{v\in V}\bbK(\H_v)$ and $\B=\oplus_{w\in W}\bbK(\H_w)$ are arbitrary subalgebras of compact operators. In what follows, we will identify $f \in \oplus_v \bbK(\H_v)$ and $g \in \oplus_w \bbK(\H_w)$ as functions on $V$ and $W$ respectively such that $f(v)\in \bbK(\H_v)$ and $g(w) \in \bbK(\H_w)$ for $v\in V$ and $w\in W$. Let $\mathsf{C}$ be a $V\times W$ matrix so that $\mathsf{C}_{vw}$ is some cardinal entry. We denote as before
$$
E_\mathsf{C}:= \{ \ (v,\alpha,w) \ | \ 0\leq \alpha < \mathsf{C}_{vw}, \ v\in V, \ w\in W \ \}.
$$

With the data above, we may construct an $\A-\B$ correspondence $X(\mathsf{C})$. First we let $X_0(\mathsf{C})$ be all finitely supported functions $\xi$ on $E_\mathsf{C}$ such that $\xi(v,\alpha,w) \in \H_v \otimes_\mathbb{C} \H_w$ is an element of the $\bbK(\H_v)-\bbK(\H_w)$ correspondence $\H_v \otimes_\mathbb{C} \H_w$. By taking the Hausdorff completion of $X_0(\mathsf{C})$ with respect to the norm induced by the inner product
$$
\langle \xi,\eta \rangle(w) :=\sum_{(v,\alpha,w) \in E_\mathsf{C}}\langle \xi(v,\alpha,w),\eta(v,\alpha,w) \rangle_{\bbK(\H_w)},
$$
we obtain a Hilbert C*-module $X(\mathsf{C})$ over $\B$. The left action of $\A$ and the right action of $\B$ are then given by
$$
(f\cdot \xi \cdot g)(v,\alpha,w) := f(v) \cdot \xi(v,\alpha, w) \cdot g(w).
$$

The next result, which is a generalization of \cite[Proposition 3.2]{CDE}, shows that the above construction describes \emph{all} $\A-\B$-correspondences where $\A$ and $\B$ are C*-subalgebras of compact operators.

\begin{prp} \label{pr:corovercmpt}
Let $\A = \oplus_v \bbK(\H_v)$ and $\B = \oplus_w \bbK(\H_w)$ for two sets $V$ and $W$ and Hilbert spaces $\{\H_v\}_{v\in V}$ and $\{\H_w\}_{w\in W}$. Suppose that $X$ is an $\A-\B$ correspondence. Then there exists a \emph{unique} $V\times W$ matrix $\mathsf{C}$ with cardinal entries such that $X$ is unitarily equivalent to $X(\mathsf{C})$ as an $\A-\B$ correspondence.
\end{prp}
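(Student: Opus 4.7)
The plan is to decompose $X$ into blocks indexed by pairs $(v,w) \in V \times W$, apply Proposition~\ref{pr:stnd-cor-decomp} to each block, and reassemble. For each $v \in V$, let $P_v$ denote the identity of $\bbK(\H_v)$ viewed as a projection in the multiplier algebra $M(\A)$, and likewise $Q_w \in M(\B)$. Non-degeneracy (Assumption~\ref{a:nd}) guarantees that the left action of $\A$ extends strictly to a unital $*$-homomorphism $M(\A) \to \L(X)$, yielding a mutually orthogonal family of adjointable projections $\{P_v\}_{v \in V}$ on $X$; a parallel statement holds for $\{Q_w\}_{w \in W}$, acting on the right and commuting with the $P_v$'s.

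The first substantive step is to establish the orthogonal internal Hilbert-module direct sum
\[
X \;=\; \bigoplus_{(v,w)\in V\times W} P_v X Q_w.
\]
Using that $\phi_X(\A) X = X$ together with the fact that any $a \in \A = \bigoplus_v \bbK(\H_v)$ is norm-approximable by finitely supported elements, every $\xi \in X$ is a norm-limit of vectors of the form $\sum_{v \in F} P_v \xi'$; applying the right-hand version to each $P_v \xi'$ then shows that $\xi$ lies in the norm-closure of the algebraic sum $\sum_{(v,w) \in F'} P_v X Q_w$, as required. Each summand $P_v X Q_w$ is then a non-degenerate $\bbK(\H_v)-\bbK(\H_w)$-correspondence, so Proposition~\ref{pr:stnd-cor-decomp} produces a cardinal $\mathsf{C}_{vw}$ with $P_v X Q_w \cong [\H_v \otimes_\mathbb{C} \H_w]^{\oplus \mathsf{C}_{vw}}$. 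Setting $\mathsf{C} = [\mathsf{C}_{vw}]_{v,w}$ and reassembling the block isomorphisms through the orthogonal decomposition gives a unitary $\A-\B$-bimodule isomorphism $X \cong X(\mathsf{C})$, matching the construction of $X(\mathsf{C})$ fiber-by-fiber over $E_\mathsf{C}$.

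Uniqueness is handled by extracting $\mathsf{C}_{vw}$ as a unitary invariant. For any minimal (rank-one) projections $p_v \in \bbK(\H_v)$ and $q_w \in \bbK(\H_w)$, the corner $p_v X q_w$ is naturally a Hilbert space, being a correspondence over $p_v \bbK(\H_v) p_v \cong \bbC$ on the left and $q_w \bbK(\H_w) q_w \cong \bbC$ on the right, and its Hilbert-space dimension is an invariant of the unitary equivalence class of $X$. Evaluated in the model $X(\mathsf{C})$, each summand $\H_v \otimes_\mathbb{C} \H_w$ cut by $p_v$ on the left and $q_w$ on the right contributes a one-dimensional subspace, so $\dim(p_v X q_w) = \mathsf{C}_{vw}$, forcing uniqueness. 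The main technical obstacle will be the rigorous verification of the orthogonal direct sum decomposition when $V$ and $W$ are uncountable: one must ensure that summation over finite subsets of $V \times W$ yields a Cauchy net in $X$ with sum $\xi$, but this follows directly from non-degeneracy and the block structure of $\A$ and $\B$.
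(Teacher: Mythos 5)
Your argument is correct and follows essentially the same route as the paper's proof: cut $X$ into corners $P_vXQ_w$ by the factor projections in the multiplier algebras, apply Proposition~\ref{pr:stnd-cor-decomp} to each corner to get the cardinal $\mathsf{C}_{vw}$, and reassemble the blockwise unitaries into a unitary $X\cong X(\mathsf{C})$ (the density of the algebraic sum of the corners, which you justify from non-degeneracy and the block structure of $\A$ and $\B$, is exactly what the paper uses for surjectivity). Your uniqueness argument is slightly more explicit than the paper's, which simply invokes the first part of its proof: identifying $\mathsf{C}_{vw}$ as the Hilbert-space dimension of $p_vXq_w$ for minimal projections $p_v,q_w$ gives a concrete unitary invariant and is a valid (and arguably cleaner) way to close that step; the only cosmetic quibble is that right multiplication by $Q_w$ is not adjointable for the right $\B$-valued inner product, though the orthogonality $\langle XQ_w, XQ_{w'}\rangle=0$ for $w\neq w'$ that you actually need still holds.
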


\begin{proof}
Let $v\in V$ and $w\in W$, and let $P_v \in \M(\oplus_{v\in V} \bbK(\H_v)) \cong \prod_{v\in V} \bbB(\H_v)$ and $P_w \in \M(\oplus_{w\in W} \bbK(\H_w)) \cong \prod_{w\in W} \bbB(\H_w)$ be the projection onto the $v$-th and $w$-th factors respectively. It follows that $P_vX P_w$ is a $\bbK(\H_v)-\bbK(\H_w)$ correspondence. By Proposition \ref{pr:stnd-cor-decomp} there exists a cardinal $\mathsf{C}_{vw}$ such that $[\H_v \otimes_\mathbb{C} \H_w]^{(\mathsf{C}_{vw})} \cong P_vX P_w$ up to a $\bbK(\H_v)-\bbK(\H_w)$ correspondence unitary $U_{vw}$, which restricts to an isometry $U_{v,\alpha, w}$ on the $\alpha$-th copy of $\H_v \otimes_\mathbb{C} \H_w$. Thus, we may define a map $U : X_c(\mathsf{C}) \rightarrow X$ by setting $U(h \delta_{(v,\alpha, w)}) := U_{v,\alpha, w}(h)$, where $h \delta_{(v,\alpha, w)}$ is the function on $E_\mathsf{C}$ with all zeros, except that its value at the point $(v,\alpha, w) \in E_\mathsf{C}$ is the element $h \in \H_v \otimes_\mathbb{C} \H_w$, and then extend $U$ to a $\A-\B$ bimodule map on $X_c(\mathsf{C})$. But now, if $\xi \in X_c(\mathsf{C})$ is a finitely supported function such that $\xi(v,\alpha, w) \in \H_v\otimes_\mathbb{C} \H_w$, we get that
$$
|U(\xi)|^2(w)= \sum_{(v,\alpha, w) \in E_\mathsf{C}} |U_{v,\alpha, w}(\xi(v,\alpha, w))|^2 = \sum_{(v,\alpha, w) \in E_\mathsf{C}} |\xi(v,\alpha, w)|^2 = |\xi|^2(w).
$$
Hence, $U$ extends to an isometric bimodule map on $X(\mathsf{C})$. Since the linear span of elements $U_{vw}(h \delta_{(v,\alpha, w)})$, varying over $h\in \H_v \otimes_\mathbb{C} \H_w$, $v\in V, w\in W$ and $0 \leq \alpha < \mathsf{C}_{vw}$, is dense in $X$, we get that $U$ is surjective. Therefore, $U$ is a unitary isomorphism of $\A-\B$ correspondences.

As for uniqueness, suppose that $Y$ is another $\A-\B$ correspondence which is unitarily isomorphic to $X$ via an $\A-\B$-module unitary $U$. Hence, if $\mathsf{C}^X$ and $\mathsf{C}^Y$ are the matrices associated with $X$ and $Y$ respectively, by the first part of the proof we must have that $\mathsf{C}^X_{vw} = \mathsf{C}^Y_{vw}$ for every $v\in V$ and $w\in W$.
\end{proof}

\begin{prp} \label{pr:mult-to-tensor}
let $U,V,W$ be sets, and let $\A = \oplus_{u\in U} \bbK(\H_u)$, $\B = \oplus_{v\in V} \bbK(\H_v)$ and $\C= \oplus_{w\in W} \bbK(\H_w)$ be C*-subalgebras of compact operators. Suppose $\mathsf{C}$ and $\mathsf{D}$ are $U\times V$ and $V\times W$ matrices (respectively) with cardinal entries. Let $X(\mathsf{C})$, $X(\mathsf{D})$ and $X(\mathsf{C}\mathsf{D})$ be the $\A-\B$, $\B-\C$ and $\A-\C$ correspondences (respectively) as constructed above. Then, there is a unitary isomorphism $\tau: X(\mathsf{C})\otimes_{\B} X(\mathsf{D}) \rightarrow X(\mathsf{C}\mathsf{D})$.
\end{prp}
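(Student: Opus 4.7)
The plan is to construct $\tau$ explicitly on simple tensors, capitalizing on the canonical identification of imprimitivity bimodules
\[
  \H_u \otimes_\mathbb{C} \H_v \otimes_{\bbK(\H_v)} \H_v \otimes_\mathbb{C} \H_w \cong \H_u \otimes_\mathbb{C} \H_w
\]
that arises because $\H_v$ implements a Morita equivalence between $\bbK(\H_v)$ and $\mathbb{C}$. First, using the cardinal identity $(\mathsf{C}\mathsf{D})_{uw} = \sum_{v\in V} \mathsf{C}_{uv}\cdot \mathsf{D}_{vw}$, fix for each pair $(u,w)\in U\times W$ a bijection
\[
  \gamma_{u,w}\colon \bigsqcup_{v\in V} \bigl\{0,\ldots,\mathsf{C}_{uv}-1\bigr\}\times\bigl\{0,\ldots,\mathsf{D}_{vw}-1\bigr\} \longrightarrow \bigl\{0,\ldots,(\mathsf{C}\mathsf{D})_{uw}-1\bigr\}.
\]
Writing $h\,\delta_{(u,\alpha,v)}$ for the element of $X_0(\mathsf{C})$ supported at the edge $(u,\alpha,v)\in E_{\mathsf{C}}$ with value $h\in \H_u\otimes_\mathbb{C}\H_v$ (and zero elsewhere), I define $\tau$ on a simple tensor
\[
  (h_1\otimes h_2)\delta_{(u,\alpha,v)} \otimes (k_1\otimes k_2)\delta_{(v',\beta,w)}
\]
to be $0$ when $v\ne v'$, and otherwise to be the image of $(h_1\otimes h_2)\otimes_{\bbK(\H_v)}(k_1\otimes k_2)$ under the canonical isomorphism displayed above, placed at the edge $(u,\gamma_{u,w}(\alpha,\beta,v),w)\in E_{\mathsf{C}\mathsf{D}}$. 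Then extend linearly.

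Three verifications follow. First, $\tau$ is $\B$-balanced by construction, since it is defined through the balanced tensor product over $\bbK(\H_v)$ at each matching middle vertex $v$. Second, $\tau$ preserves the $\C$-valued inner product on simple tensors; expanding $\langle \tau(\xi\otimes\eta),\tau(\xi'\otimes\eta')\rangle$ and $\langle \eta,\langle \xi,\xi'\rangle_\B\cdot \eta'\rangle$ using the inner product $\langle h_1\otimes h_2,h_1'\otimes h_2'\rangle_{\bbK(\H_v)}$ on $\H_u\otimes_\mathbb{C}\H_v$ together with the left-action formula on $\H_v\otimes_\mathbb{C}\H_w$, both sides reduce to the same expression (a scalar multiple of $\theta_{k_2,k_2'}$), provided all the edge indices match. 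Third, left $\A$-equivariance and right $\C$-equivariance are immediate, as these actions operate only on the first ($\H_u$) and last ($\H_w$) tensor factors respectively, which pass through the definition of $\tau$ unchanged.

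It follows that $\tau$ extends by continuity to an isometric $\A$-$\C$-bimodule map on $X(\mathsf{C})\otimes_\B X(\mathsf{D})$. For surjectivity, any generator $(h\otimes k)\delta_{(u,\gamma,w)}$ of $X(\mathsf{C}\mathsf{D})$ is realized as the image under $\tau$ of $(h\otimes e)\delta_{(u,\alpha,v)}\otimes (e\otimes k)\delta_{(v,\beta,w)}$ for any unit vector $e\in \H_v$, where $(\alpha,\beta,v) = \gamma_{u,w}^{-1}(\gamma)$; such elements span a dense subspace, so $\tau$ is a unitary correspondence isomorphism. The main obstacle is organizational bookkeeping: one must carefully track the sesquilinearity conventions on $\H_v$, and ensure that the pairing underlying the Morita isomorphism $\H_v\otimes_{\bbK(\H_v)}\H_v\cong\mathbb{C}$ is applied with the correct variance in the middle factors so that the resulting formula is simultaneously $\mathbb{C}$-linear on the algebraic tensor product and $\B$-balanced.
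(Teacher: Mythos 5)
Your argument is correct in substance, but it takes a genuinely different route from the paper's. The paper never constructs the unitary explicitly: since $X(\mathsf{C})\otimes_{\B}X(\mathsf{D})$ and $X(\mathsf{C}\mathsf{D})$ are both $\A-\C$ correspondences between C*-subalgebras of compact operators, the uniqueness clause of Proposition~\ref{pr:corovercmpt} reduces the claim to matching multiplicities, and one computes that the corner $P_u\bigl(X(\mathsf{C})\otimes_{\B}X(\mathsf{D})\bigr)P_w$ decomposes as $\oplus_{v\in V}\,P_uX(\mathsf{C})P_v\otimes_{\bbK(\H_v)}P_vX(\mathsf{D})P_w\cong[\H_u\otimes_{\mathbb{C}}\H_w]^{(\sum_v\mathsf{C}_{uv}\mathsf{D}_{vw})}$, with $\sum_v\mathsf{C}_{uv}\mathsf{D}_{vw}=(\mathsf{C}\mathsf{D})_{uw}$ by definition of the matrix product. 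Your explicit $\tau$ encodes exactly this decomposition --- the vanishing at $v\neq v'$ (which is consistent, since such simple tensors are already zero in the balanced tensor product) and the bijections $\gamma_{u,w}$ are the direct sum over $v$ and the cardinal identity in disguise --- but you pay for bypassing the classification result with the bookkeeping you flag at the end: to make the middle pairing $\H_v\otimes_{\bbK(\H_v)}\H_v\to\mathbb{C}$ simultaneously $\mathbb{C}$-bilinear and $\bbK(\H_v)$-balanced, one of the two middle factors must carry the conjugate (standard right $\bbK(\H_v)$-module) structure, i.e.\ be $\overline{\H_v}$, after which the formula $h_1\otimes h_2\otimes k_1\otimes k_2\mapsto\langle h_2,k_1\rangle\,h_1\otimes k_2$ is well defined and your three verifications and the surjectivity argument go through. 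The paper's proof is shorter and reuses machinery it has already built (Propositions~\ref{pr:imprimitive-bimodule}, \ref{pr:stnd-cor-decomp} and \ref{pr:corovercmpt}); yours is self-contained at the level of this proposition and yields a concrete formula for $\tau$, at the cost of the sesquilinearity care you correctly identify as the main hazard. Both are valid.
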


\begin{proof}
It is clear that both $X(\mathsf{C}) \otimes_\B X(\mathsf{D})$ and $X(\mathsf{C}\mathsf{D})$ are $\A-\C$ correspondences, so by uniqueness in Proposition \ref{pr:corovercmpt} it suffices to show that for every $u\in U$ and $w\in W$, the multiplicity of $\H_u\otimes_\mathbb{C} \H_w$ in $P_u X(\mathsf{C}) \otimes_\B X(\mathsf{D}) P_w$ is exactly equal to $(\mathsf{C}\mathsf{D})_{uw}$.

So fix $u\in U$ and $w\in W$. We write for every $v\in V$ the $\bbK(\H_u)-\bbK(\H_v)$ correspondence $P_u X(\mathsf{C}) P_v$, and the $\bbK(\H_v) - \bbK(\H_w)$ correspondence $P_v X(\mathsf{D}) P_w$. It is easy to verify that $P_u X(\mathsf{C}) \otimes_{\B} X(\mathsf{D}) P_w \cong \oplus_{v\in V} P_u X(\mathsf{C}) P_v \otimes_{\bbK(\H_v)} P_v X(\mathsf{D}) P_w$, and by assumption we have the unitary isomorphisms $P_u X(\mathsf{C}) P_v \cong [\H_u \otimes_\mathbb{C} \H_v]^{(\mathsf{C}_{uv})}$ and $P_v X(\mathsf{D}) P_w \cong [\H_v\otimes_\mathbb{C} \H_w]^{(\mathsf{D}_{vw})}$. A straightforward verification then shows that
$$
P_u X(\mathsf{C}) P_v \otimes_{\bbK(\H_v)} P_v X(\mathsf{D}) P_w \cong [\H_u \otimes_\mathbb{C} \H_v]^{(\mathsf{C}_{uv})} \otimes_{\bbK(\H_v)} [\H_v\otimes \H_w]^{(\mathsf{D}_{vw})} \cong [\H_u \otimes_\mathbb{C} \H_w]^{(\mathsf{C}_{uv}D_{vw})}.
$$
Thus, the multiplicity of $\H_u \otimes_\mathbb{C} \H_w$ in $P_u X(\mathsf{C}) \otimes_{\B} X(\mathsf{D}) P_w$ is exactly equal to the cardinality of a disjoint union over $v\in V$ of sets, each of cardinality $\mathsf{C}_{uv}\mathsf{D}_{vw}$. This latter cardinality is equal by definition to $(\mathsf{C}\mathsf{D})_{uw}$.
\end{proof}

The following is the main result of this section, leading to a resolution of an open conjecture from \cite[Section 6]{KK-JFA2014} (see the paragraph right before \cite[Theorem 6.1]{KK-JFA2014}).

\begin{thm}\label{thm-SSEmodule-implies-SSE}
Let $\mathsf{A}$ and $\mathsf{B}$ be finite essential matrices with entries in $\mathbb{N}$ over $V$ and $W$ respectively. Denote $\A = c(V)$ and $\B = c(W)$. If $X(\mathsf{A})$ and $X(\mathsf{B})$ are strong shift equivalent with lag $m$, then $\mathsf{A}$ and $\mathsf{B}$ are strong shift equivalent with lag $m$.
\end{thm}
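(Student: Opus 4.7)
The plan is to transfer the assumed strong shift equivalence from the C*-correspondence level down to the matrix level, by successively reducing the intermediate data until the classification in Propositions~\ref{pr:corovercmpt} and~\ref{pr:mult-to-tensor} applies.

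First, since $\mathsf{A}$ and $\mathsf{B}$ are finite essential matrices with entries in $\bbN$, both $X(\mathsf{A})$ and $X(\mathsf{B})$ are regular and full. By Theorem~\ref{thm:reg-reduction}, we may assume that the strong shift equivalence is realized by C*-correspondences $R_1, \ldots, R_m$ and $S_1, \ldots, S_m$ over intermediate C*-algebras $\C_0 = c(V), \C_1, \ldots, \C_{m-1}, \C_m = c(W)$, such that each $X_i := R_i \otimes S_i \cong S_{i+1} \otimes R_{i+1}$ is regular and full over $\C_i$.

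Second, and this is the heart of the argument, I would reduce inductively so that each intermediate $\C_i$ is a C*-subalgebra of compact operators. The base case $\C_0 = c(V)$ is immediate. Assuming $\C_{i-1} \cong \oplus_v \bbK(\H_v)$ is a subalgebra of compacts, I view $S_i$ as a right Hilbert $\C_{i-1}$-module and decompose it along the central projections in the multiplier algebra of $\C_{i-1}$. The structure theory of Hilbert modules over a direct sum of compact operator algebras (in the spirit of Proposition~\ref{pr:imprimitive-bimodule} and the discussion preceding Proposition~\ref{pr:corovercmpt}) then shows that $\K(S_i)$ is itself a direct sum of compact operator algebras, and hence a C*-subalgebra of compacts. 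The next step is to replace $\C_i$ with $\K(S_i)$, using (a) fullness of $S_i$ over $\C_{i-1}$, which follows from the fullness of $X_{i-1}$; (b) an injectivization in the spirit of Proposition~\ref{prp:to-injective} applied to $S_i$; and (c) a Morita-type reduction making $S_i$ into a $\K(S_i)$-$\C_{i-1}$ imprimitivity bimodule. The corresponding modifications to $R_i$, $R_{i+1}$, and $S_{i+1}$ are then designed to preserve the SSE chain as well as the regularity and fullness of the tensor products.

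Third, once every $\C_i$ is a C*-subalgebra of compact operators, Proposition~\ref{pr:corovercmpt} associates to each $R_i$ and $S_i$ unique matrices $\mathsf{R}_i$ and $\mathsf{S}_i$ with cardinal entries such that $R_i \cong X(\mathsf{R}_i)$ and $S_i \cong X(\mathsf{S}_i)$. Proposition~\ref{pr:mult-to-tensor} then translates each tensor product isomorphism into a matrix identity $\mathsf{R}_i \mathsf{S}_i = \mathsf{S}_{i+1} \mathsf{R}_{i+1}$, together with the boundary identities $\mathsf{R}_1 \mathsf{S}_1 = \mathsf{A}$ and $\mathsf{S}_m \mathsf{R}_m = \mathsf{B}$. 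Finiteness of the matrices $\mathsf{R}_i, \mathsf{S}_i$ and the fact that their entries lie in $\bbN$ are forced by those of $\mathsf{A}$ and $\mathsf{B}$ via these identities, and the SSE of $\mathsf{A}$ and $\mathsf{B}$ with lag $m$ follows.

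The main obstacle will be step (c) of the second paragraph: arranging $\phi_{S_i} \colon \C_i \to \L(S_i)$ to be an isomorphism onto $\K(S_i)$. Making $S_i$ both proper (so that $\phi_{S_i}(\C_i) \subseteq \K(S_i)$) and surjective onto $\K(S_i)$ requires combining one-sided analogues of the reductions in Section~\ref{sec-SSE-cor} with the Morita structure implicit in the regularity and fullness of the $X_j$, all while preserving the compatibilities along the SSE chain.
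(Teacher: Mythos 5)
Your overall skeleton is the same as the paper's (regularize the intermediaries via Theorem~\ref{thm:reg-reduction}, show the intermediate coefficient algebras are C*-subalgebras of compact operators, invoke Propositions~\ref{pr:corovercmpt} and~\ref{pr:mult-to-tensor} to descend to matrices), but the step you yourself flag as ``the main obstacle'' is a genuine gap in your write-up, and it is also a detour. You propose to \emph{replace} $\C_i$ by $\K(S_i)$ via a Morita-type reduction that turns $S_i$ into an imprimitivity bimodule, and you admit you do not know how to do this while preserving the SSE chain. None of that is needed: you do not need $\phi$ to be an isomorphism onto $\K(\cdot)$, only an \emph{injection into} it. The paper inducts backwards from $\C_m=c(W)$: since $\C_m$ is a C*-subalgebra of compacts, $\K(R_m)$ is one by \cite[Theorem~9(i)]{BG02}; regularity of $R_m$ gives an injective $*$-homomorphism $\C_{m-1}\to\K(R_m)$, and a C*-subalgebra of a C*-subalgebra of compact operators is again of the form $\oplus\,\bbK(\H_v)$. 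Iterating gives that every $\C_i$ is (isomorphic to) a subalgebra of compacts, with no modification of the correspondences whatsoever. So the obstacle you name is a non-obstacle, but as written your argument for this step is incomplete and cannot be certified.

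There is a second, smaller gap at the end. You assert that finiteness of the matrices $\mathsf{R}_i,\mathsf{S}_i$ and the fact that their entries lie in $\bbN$ are ``forced'' by the identities with $\mathsf{A}$ and $\mathsf{B}$. They are not: Proposition~\ref{pr:corovercmpt} produces matrices indexed by possibly infinite sets $U_i$ with cardinal entries, and the identity $\mathsf{A}=\mathsf{C}_1\mathsf{D}_1$ only constrains the entries $(\mathsf{C}_1)_{vu}$ for which some $(\mathsf{D}_1)_{uv'}$ is nonzero; a column of $\mathsf{C}_1$ sitting over a zero row of $\mathsf{D}_1$ can be infinite, and $U_1$ itself can be infinite. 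One must trim the index sets and entries, and each trimming perturbs the neighbouring identity $\mathsf{D}_i\mathsf{C}_i=\mathsf{C}_{i+1}\mathsf{D}_{i+1}$, which is why the paper runs the compression once from the $\mathsf{A}$ end, once from the $\mathsf{B}$ end, and then takes the unions $\widehat{U}_i=U_i'\cup U_i''$ of the surviving index sets to recover finite matrices over $\bbN$ satisfying all the identities simultaneously, with the correct boundary sets $V$ and $W$. Your proposal needs this argument spelled out.
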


\begin{proof}
Since $X(\mathsf{A})$ and $X(\mathsf{B})$ are strong shift equivalent, there exist C*-correspondences $R_1,...,R_m$ and $S_1,...,S_m$ such that
\begin{equation} \label{eq:modsse}
X(\mathsf{A}) \cong R_1 \otimes S_1, S_1 \otimes R_1 \cong R_2 \otimes S_2, \ ... \ , S_{m-1}\otimes R_{m-1} \cong R_m \otimes S_m, S_m \otimes R_m \cong X(\mathsf{B}).
\end{equation}
We denote by $\C_n$ the coefficient C*-algebra of $S_n\otimes R_n \cong R_{n+1} \otimes S_{n+1}$ for $n=1,...,m-1$. Since $\mathsf{A}$ and $\mathsf{B}$ are essential, it follows that $X(\mathsf{A})$ and $X(\mathsf{B})$ are regular, so that by Theorem \ref{thm:reg-reduction} we may assume without loss of generality that $R_1,...,R_m$ and $S_1,...,S_m$ are regular.

Now, since $c_0(W)$ is a C*-subalgebra of compact operators, by \cite[Theorem~9(i)]{BG02}, $\K(R_m)$ is also a C*-subalgebra of compact operators, and since $R_m$ is regular, it follows that $\C_{m-1}$ is a C*-subalgebra of compact operators. We continue this by induction, so that if $\C_n$ is a C*-subalgebra of compact operators, then  $\K(R_n)$ is as well (by \cite[Theorem~9(i)]{BG02}), and since $R_n$ is regular it follows that $\C_{n-1}$ is also a C*-subalgebra of compact operators. Thus, we have shown that $\C_n$ are all C*-subalgebras of compact operators for $n=1,...,m-1$.

Thus, by Proposition \ref{pr:corovercmpt} there exist sets $V=U_0, U_1,...,U_{m-1},U_m = W$, as well as $U_i \times U_{i+1}$ matrices $\mathsf{C}_i$ and $U_{i+1} \times U_i$ matrices $\mathsf{D}_i$ such that $R_i \cong X(\mathsf{C}_i)$ and $S_i \cong X(\mathsf{D}_i)$ for $i=1,...,m$. Now, by equation \eqref{eq:modsse}, Proposition \ref{pr:mult-to-tensor} and uniqueness in Proposition \ref{pr:corovercmpt} we get that
$$
\mathsf{A} = \mathsf{C}_1\mathsf{D}_1, \ \mathsf{D}_1\mathsf{C}_1 = \mathsf{C}_2\mathsf{D}_2, \ ... \ , \mathsf{D}_{m-1}\mathsf{C}_{m-1} = \mathsf{C}_m\mathsf{D}_m, \ \mathsf{D}_m\mathsf{C}_m = \mathsf{B}.
$$
Hence, to show that $\mathsf{A}$ and $\mathsf{B}$ are strong shift equivalent, we need only show that $\mathsf{C}_i$ and $\mathsf{D}_i$ can be chosen to be finite and with entries in $\mathbb{N}$ for $i=1,...,m$. We do this by induction. Since $\mathsf{A}$ is finite with finite entries, we only need finitely many columns of $\mathsf{C}_1$ and finitely many rows of $\mathsf{D}_1$ to get $\mathsf{A}=\mathsf{C}_1\mathsf{D}_1$ and each non-finite entry can be made into a $0$. Hence, if we remove the unnecessary columns / rows of $\mathsf{C}_1$ and $\mathsf{D}_1$ we get (respectively) finite matrices $\mathsf{C}_1'$, which is $V\times U_1'$, and $\mathsf{D}_1'$, which is $U_1'\times V$, such that we still have $\mathsf{A}=\mathsf{C}_1'\mathsf{D}_1'$. Then, the matrix $\mathsf{D}_1'\mathsf{C}_1'$ coincides with the $U_1'\times U_1'$ corner of $\mathsf{C}_2\mathsf{D}_2$, so we may compress $\mathsf{C}_2$ to a $U_1' \times U_2$ matrix, and $\mathsf{D}_2$ to a $U_2 \times U_1'$ matrix. Thus, we may assume that $\mathsf{C}_2$ and $\mathsf{D}_2$ are $U_1'\times U_2$ and $U_2\times U_1'$ respectively, where $U_1'$ is finite. We may then proceed by induction to obtain \emph{finite} sets $V = U_0',U_1',...,U_{m-1}',U_m'$ together with $U_i' \times U_{i+1}'$ matrices $\mathsf{C}_i'$ and $U_{i+1}' \times U_i'$ matrices $\mathsf{D}_i'$ with entries in $\mathbb{N}$ such that
$$
\mathsf{A} = \mathsf{C}_1'\mathsf{D}_1', \ \mathsf{D}_1'\mathsf{C}_1' = \mathsf{C}_2'\mathsf{D}_2', \ ... , \ \mathsf{D}_{m-1}'\mathsf{C}_{m-1}' = \mathsf{C}_m'\mathsf{D}_m'.
$$
However, now the issue is that $U_m'$, despite being finite, might not coincide with $W$. So we repeat the procedure above from the end to the beginning to produce finite sets $W=U_m'',U_{m-1}'',...,U_1'',U_0$, together with matrices $U_{i+1}'' \times U_i''$ matrices $\mathsf{D}_i''$ and $U_i'' \times U_{i+1}''$ matrices $\mathsf{C}_i''$ such that
$$
\mathsf{B} = \mathsf{D}_m''\mathsf{C}_m'', \ \mathsf{C}_m''\mathsf{D}_m'' = \mathsf{D}_{m-1}''\mathsf{C}_{m-1}'', \ ... , \ \mathsf{C}_2''\mathsf{D}_2'' = \mathsf{D}_1''\mathsf{C}_1''.
$$
If we take the finite sets $\widehat{U}_i:= U_i' \cup U_i''$, and take the $\widehat{U}_i \times \widehat{U}_{i+1}$ compression $\widehat{\mathsf{C}}_i$ of $\mathsf{C}_i$ for $i=1,...,m$, as well as the  $\widehat{U}_{i+1} \times \widehat{U}_i$ compression $\widehat{\mathsf{D}}_i$ of $\mathsf{D}_i$ for $i=1,...,m$, we would get that $\widehat{U}_0 = V$ and $\widehat{U}_m=W$, as well as
$$
\mathsf{A} = \widehat{\mathsf{C}}_1\widehat{\mathsf{D}}_1, \ \widehat{\mathsf{D}}_1\widehat{\mathsf{C}}_1 = \widehat{\mathsf{C}}_2\widehat{\mathsf{D}}_2, \ ... \ , \widehat{\mathsf{D}}_{m-1}\widehat{\mathsf{C}}_{m-1} = \widehat{\mathsf{C}}_m\widehat{\mathsf{D}}_m, \ \widehat{\mathsf{D}}_m\widehat{\mathsf{C}}_m = \mathsf{B},
$$
where now all matrices $\widehat{\mathsf{C}}_i$ and $\widehat{\mathsf{D}}_i$ are finite.
\end{proof}

Since the converse of the above theorem is also true by \cite[Proposition 3.3]{CDE}, this shows that the shift equivalence problem for C*-correspondences from \cite[Section 6]{KK-JFA2014} has a negative solution, again due to the $7 \times 7$ irreducible counterexample of Kim and Roush \cite{KR99}.

\begin{cor} \label{cor:sep-neg}
There exist $7\times 7$ irreducible matrices $\mathsf{A}$ and $\mathsf{B}$ with entries in $\mathbb{N}$ such that $X(A)$ and $X(B)$ are shift equivalent but not strong shift equivalent.
\end{cor}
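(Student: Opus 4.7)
The plan is to combine Theorem \ref{thm-SSEmodule-implies-SSE} with the Kim--Roush counterexample. The paragraph preceding the corollary already observes that the converse of Theorem \ref{thm-SSEmodule-implies-SSE} holds by \cite[Proposition 3.3]{CDE}, so SSE of the graph C*-correspondences $X(\mathsf{A})$ and $X(\mathsf{B})$ is equivalent to SSE of $\mathsf{A}$ and $\mathsf{B}$. Similarly, \cite[Proposition 3.5]{CDE} (referenced in the introduction) records that SE of the associated C*-correspondences is equivalent to SE of the underlying matrices. Hence, for finite essential matrices with entries in $\bbN$, the question of separating SE from SSE for graph C*-correspondences reduces exactly to the same question for the matrices themselves.

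First I would invoke Kim and Roush's main result from \cite{KR99} providing a pair of $7\times 7$ irreducible matrices $\mathsf{A}$ and $\mathsf{B}$ with entries in $\bbN$ that are shift equivalent but not strong shift equivalent. Because such matrices are irreducible, they are in particular essential, so all of the results in this section apply. By \cite[Proposition~3.5]{CDE} (the matrix-to-correspondence direction for SE), the C*-correspondences $X(\mathsf{A})$ and $X(\mathsf{B})$ are SE as regular and full C*-correspondences. Next, suppose toward a contradiction that $X(\mathsf{A})$ and $X(\mathsf{B})$ were strong shift equivalent with some lag $m$. Then Theorem \ref{thm-SSEmodule-implies-SSE} would yield that $\mathsf{A}$ and $\mathsf{B}$ are strong shift equivalent with lag $m$, contradicting the Kim--Roush counterexample.

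There is no substantial obstacle at this stage: all of the heavy lifting was already done in Theorem \ref{thm-SSEmodule-implies-SSE} (which itself relied on Theorem \ref{thm:reg-reduction} to keep the intermediary C*-correspondences regular, and on Propositions \ref{pr:corovercmpt} and \ref{pr:mult-to-tensor} to reconstruct matrices with entries in $\bbN$ from the resulting correspondences). The only thing worth being careful about is the direction of the implication: we need SE of matrices implying SE of correspondences (easy, essentially by Proposition \ref{pr:mult-to-tensor} applied to the shift-equivalence data $\mathsf{R}$ and $\mathsf{S}$), and SSE of correspondences implying SSE of matrices (this is precisely Theorem \ref{thm-SSEmodule-implies-SSE}). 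Putting both together with the Kim--Roush pair gives the desired separation.
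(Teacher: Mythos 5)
Your proposal is correct and follows exactly the paper's argument: the Kim--Roush $7\times 7$ irreducible pair gives matrices that are SE but not SSE, SE passes to the graph correspondences by \cite[Proposition 3.5]{CDE}, and Theorem \ref{thm-SSEmodule-implies-SSE} rules out SSE of the correspondences. No gaps.
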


We now use our theorem to show that elementary strong shift relation of C*-correspondences is not a transitive relation. This answers another question in \cite{KK-JFA2014} (see the paragraph right after \cite[Definition 4.3]{KK-JFA2014}).  

\begin{exm}
Let $\mathsf{A} = \begin{bmatrix} 1 & 1 & 0 \\ 0 & 0 & 1 \\ 1 & 1 & 1 \end{bmatrix}$, $\mathsf{B} = \begin{bmatrix} 1& 1 \\ 1 & 1 \end{bmatrix}$, and $\mathsf{C} = \begin{bmatrix} 2 \end{bmatrix}$. By \cite[Example 7.2.2]{LM95} we know that $\mathsf{A}$ is elementary strong shift related to $\mathsf{B}$ and $\mathsf{B}$ is elementary strong shift related to $\mathsf{C}$ but $\mathsf{A}$ is not elementary strong shift related to $\mathsf{C}$. By Theorem~\ref{thm-SSEmodule-implies-SSE}, $X(\mathsf{A})$ is elementary strong shift related to $X(\mathsf{B})$ and $X(\mathsf{B})$ is elementary strong shift related to $X(\mathsf{C})$ but $X(\mathsf{A})$ is not elementary strong shift related to $X(\mathsf{C})$.  Therefore, elementary strong shift relation for C*-correspondence is not a transitive relation.  
\end{exm}

\end{document}